\documentclass[12pt,a4paper]{amsart}
\renewcommand{\baselinestretch}{1.1}
\usepackage{amsmath}
\usepackage{amssymb}
\usepackage{latexsym}
\usepackage{xypic}
\usepackage{amscd,amsthm}

\usepackage{amsmath}
\usepackage{color}
\usepackage{bbold}
\usepackage{amsthm}
\usepackage{amssymb}
\usepackage{amsfonts}
\usepackage{epsfig} 
\usepackage{multirow}
\usepackage{rotate} 
\usepackage{graphicx}
\usepackage{fancyhdr}
\usepackage{wrapfig}


\setlength{\textheight}{9in}
\addtolength{\textwidth}{1in}
\addtolength{\oddsidemargin}{-.7in}
\addtolength{\evensidemargin}{-.7in}
\addtolength{\topmargin}{-.5in}

\usepackage{textcomp}
\usepackage{graphicx}
\usepackage{psfrag}
\usepackage{epic}
\usepackage{eepic}
\usepackage[matrix,arrow,curve]{xy}
\input{epsf}

\newtheorem{prop}{Proposition}[section]

\newtheorem{theo}[prop]{Theorem}
\newtheorem{conj}[prop]{Conjecture}
\newtheorem{rem}[prop]{Remark}

\newtheorem{coro}[prop]{Corollary}
\newtheorem{lem}[prop]{Lemma}
\newtheorem{exam}[prop]{Example}

\theoremstyle{definition}
\newtheorem{dfn}[prop]{Definition}
\theoremstyle{remark}

\theoremstyle{remark}

 \DeclareMathOperator{\inter}{\mathrm{int}}

 \DeclareMathOperator{\conv}{\mathrm{conv}}

\DeclareMathOperator{\nv}{{{\mathrm Vol}_{\mathbb N}}}

\DeclareMathOperator{\N}{{\mathbb N}}

\DeclareMathOperator{\A}{{\mathbb A}}
\DeclareMathOperator{\R}{{\mathbb R}}
\DeclareMathOperator{\Z}{{\mathbb Z}}
\DeclareMathOperator{\PP}{{\mathbb P}}

\DeclareMathOperator{\Hom}{{\mathrm Hom}}

\newtheorem*{theorem*}{Theorem}
\newtheorem*{corollary*}{Corollary}


\begin{document}
\title[Gorenstein Toric Del Pezzo Varieties]{Classification 
of Gorenstein Toric Del Pezzo Varieties 
in arbitrary dimension}

\author[Victor Batyrev]{Victor Batyrev}
\address{Mathematisches Institut, Universit\"at T\"ubingen, 
Auf der Morgenstelle 10, 72076 T\"ubingen, Germany}
\email{ victor.batyrev@uni-tuebingen.de}

\author[Dorothee Juny]{Dorothee Juny}
\address{Mathematisches Institut, Universit\"at T\"ubingen, 
Auf der Morgenstelle 10, 72076 T\"ubingen, Germany}
\email{dorothee@juny.de}

\begin{abstract}
A $n$-dimensional Gorenstein toric Fano variety $X$ is called 
Del Pezzo variety if the anticanonical class $-K_X$ is 
a $(n-1)$-multiple of a Cartier divisor. Our purpose 
is to give a complete biregular classfication of Gorenstein toric Del Pezzo 
varieties in arbitrary dimension $n \geq 2$. We show that  up to 
isomorphism  there exist 
 exactly 37 Gorenstein toric Del Pezzo
 varieties of dimension $n$  
which are not cones over $(n-1)$-dimensional 
Gorenstein toric Del Pezzo varieties. Our results are closely related 
to the classification of all Minkowski sum decompositions of reflexive 
polygons due to Emiris and Tsigaridas and to 
the classification up to deformation 
of $n$-dimensional  almost Del Pezzo manifolds 
obtained by Jahnke and Peternell.
\vspace{-0.5ex}
\end{abstract}

\dedicatory{Dedicated to the memory of Professor V. A Iskovskih}

\maketitle

\section*{Introduction}

A projective algebraic  variety $X$ is called {\sl Gorenstein Fano variety} 
if the anticanonical divisor $-K_X$ is an  ample 
Cartier divisor. A $n$-dimensional  Gorenstein  Fano variety is called  
{\sl Gorenstein Del Pezzo variety}
if 
$$-K_X =
(n - 1)L$$ for some ample  Cartier divisor $L$ on $X$. 
Smooth Del Pezzo $n$-folds 
have been classified up to deformation by Iskovskih and Fujita 
\cite{Isk77,Isk78,Isk80,Fuj80,Fuj81,Fuj84}. 
A classification of Gorenstein Del Pezzo surfaces is known due to Demazure  
\cite{Dem80}. 

All Gorenstein Del Pezzo varieties 
 of dimension $n \geq 4$ have been classified up to deformation by Fujita 
\cite{Fuj90a,Fuj90b}. 
A smooth $n$-fold $X$ is called an {\sl almost 
Del Pezzo manifold} if there exists a $K_X$-trivial 
birational morphism 
$\varphi\,: \, X \to X'$ where $X'$ is a Gorenstein  Del Pezzo variety. 
It is well-known that if $n =2$ 
then the minimal resolution $\widehat{X}$ of 
a  Gorenstein Del Pezzo surface $X$ 
is always an almost Del Pezzo surface 
\cite{DuV34}. Such a surface $\widehat{X}$ 
can be  obtained from $\PP^2$ or $\PP^1 \times \PP^1$ by blowing up 
points which are not necessary in general position. 
Recently all almost Del Pezzo manifolds of arbitrary dimension $n$ were 
classified up to deformation 
by Jahnke and Peternell \cite{JP08}. 

In this paper we consider $n$-dimensional 
Gorenstein Fano varieties $X$ which are 
{\sl toric varieties}, i.e.,  $X$ contains  a
$n$-dimensional algebraic torus  $T$ as an open dense subset  
such that  the group 
multiplication $T \times T \to T $
extends to  a regular 
action $T \times X \rightarrow X$ 
\cite{Oda88,Ful93}. Let $M \cong \Z^n$ be 
the lattice of characters 
of $T$ and $M_{\R}:= M \otimes \R$ the corresponding real vector space. 
A natural $T$-action on  the space  
$H^0(X, {\mathcal O} (-K_X))$ splits into a finite 
direct sum of $1$-dimensional weight subspaces  
parametrized by a finite subset of characters 
$\{ \chi_i \} \subset M$. 
In this situation, the convex hull $\Delta$ of the set $\{ \chi_i \}$  
is a special $n$-dimensional polytope in $M_{\R}$ which 
is called {\sl reflexive 
polytope} \cite{Bat94}. The Gorenstein toric Fano variety $X$ is uniquely 
determined 
up to an isomorphism of the $n$-dimensional lattice $M$ 
by the corresponding reflexive polytope 
$\Delta$. 
It is known that for  any given dimension $n$ 
there exist only finitely many $n$-dimensional 
reflexive polytopes up to a lattice isomorphism. 
This implies 
that there exist only finitely many Gorenstein toric Fano varieties of
dimension $n$ up to biregular isomorphism (see \cite{Bat82}).
If $N:= {\rm Hom}(M, \Z)$ 
is the dual lattice and $N_{\R} = {\rm Hom}(M_{\R}, \R)$ 
the dual real vector space, then 
for every reflexive polytope $\Delta \subset M_{\R}$ there exists
a dual reflexive polytope $\Delta^* \subset N_{\R}$. Using an  
isomorphism 
$N \cong M$, we obtain that up to isomorphism all 
reflexive polytopes 
satisfy a nice combinatorial duality which plays important role in  
Mirror Symmetry  \cite{Bat94} (see also \cite{CK99}). 

If $X$ is a $n$-dimensional toric variety such that 
$-K_X = rL$ for an ample divisor 
$L$ then, up to a translation 
by an element of $M$, 
the reflexive polytope $\Delta$ is isomorphic  
to a $r$-multiple of another lattice polytope $P$, 
i.e., $\Delta \cong r P$ where all vertices of $P$ are contained in $M$. 
Such polytopes $P$ are called  
{\sl Gorenstein polytopes of index $r$} \cite{BR07,BN08}. For example, 
the standard $n$-dimensional basic lattice  simplex is a Gorenstein 
polytope of index $n+1$ and the standard $n$-dimensional lattice 
unit cube is a Gorenstein polytope of index $2$.  
The number  $d:=  n+1 -r$  is called the {\sl  degree of} 
a $n$-dimensional Gorenstein polytope $P$ (see also \cite{Bat06}).
The name ``degree'' of a $n$-dimensional  lattice polytope 
$P$ is motivated by the following fact: 
if 
\[ E_P (t) := \sum_{k =0}^{\infty} |kP \cap M| t^k \]
is the  Ehrhart series of $P$, then 
$h^*_P(t):=(1-t)^{n+1} E_P(t)$ is a polynomial of degree $d$.

The purpose of our paper  is to give a complete classification 
 up to isomorphism of all Gorenstein toric Del Pezzo 
varieties in arbitrary dimension $n \geq 2$. This is equivalent to 
a classification of all $n$-dimensional Gorenstein polytopes of index 
$r =n-1$, or, equivalently, of degree $2$. 
In the case $n =2$, Gorenstein polygons of index $1$ are exactly 
reflexive polygons and their  classification is well-known (it was 
obtained independently in  
\cite{Bat85,Rab89,Koe91}).  
There exist exactly $16$ reflexive 
polygons (see Figure 1). 

 \begin{figure}[h] \label{16polygons}
      \centering {\includegraphics[{height=10cm}]{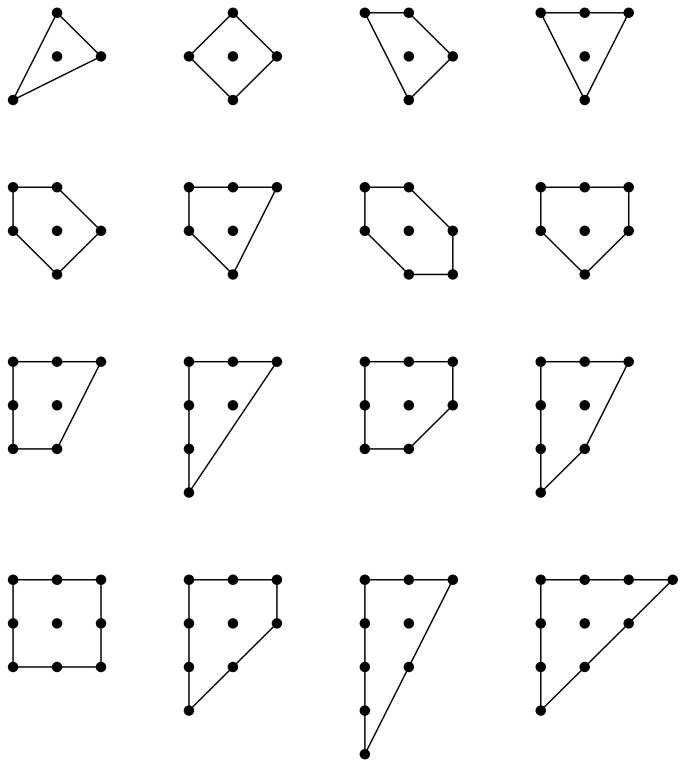}}
      \caption{All $16$ reflexive polygons, up to
        isomorphism}\label{LatticePolygons}
    \end{figure}

It is important to note  that if $X$ is a $n$-dimensional Gorenstein 
Del Pezzo variety then the  cone $X'$ 
over $X$ is a $(n+1)$-dimensional Gorenstein Del Pezzo 
variety. If in addition $X$ is toric, then $X'$ is also toric 
and the $(n+1)$-dimensional 
 Gorenstein polytope $P'$ corresponding to $X'$ is a lattice 
pyramid 
$\Pi(P)$ over the $n$-dimensional Gorenstein polytope $P$ corresponding 
 to $X$. This shows that it is  
enough to classify only those Gorenstein polytopes of index $ r = n-1$ 
which are not pyramids over low dimensional ones. 


Using a computer program, $3$- and
$4$-dimensional reflexive polytopes have been classified by Kreuzer
and Skarke \cite{KS98,KS00}. 
Their computer calculation resulted in 4319
polytopes in dimension $3$ and $473\,800\,776$ polytopes in
dimension $4$. Using the lists of $3$- and 
$4$-dimensional polytopes, Kreuzer has found $31$  
$3$-dimensional Gorenstein polytopes of index $2$ and 
$36$ $4$-dimensional Gorenstein polytopes of index  $3$.
In this paper, we give an independent proof of this fact  
without using computer classifications.

Our complete classification 
of $n$-dimensional Gorenstein polytopes of degree $2$ (i.e., of 
index $r = n-1$) gives the following more precise result: 

\begin{table}[h]
  \centering
  \begin{tabular}[h]{c|c|c}
  Dimension $n$ & Number of polytopes & Number of non-pyramids\\ \hline
  $2$       & $16$                & $16$     \\\hline
  $3$       & $31$                & $15$\\\hline
  $4$       & $36$                & $5$\\\hline
  $5$       & $37$                & $1$\\\hline
  $\geq 6$  & $37$                & -
\end{tabular}
  \label{tab:classresults}
\end{table}

We remark that the classification of $n$-dimensional Gorenstein polytopes
of index $r = n-2$ is a much harder problem. This problem in case $n =3$ 
is equivalent to the classification of all $3$-dimensional reflexive 
polytopes \cite{KS98}. Using the computer classification 
of $4$-dimensional reflexive polytopes \cite{KS00}, Kreuzer has found 
exactly 
 $5363$ reflexive polytopes $\Delta$ of dimension $4$ which are isomorphic 
to $2P$ for some lattice polytope $P$. This shows that   there exist 
exactly $5363$ Gorenstein polytopes $P$  
of dimension $4$ and  index  $2$. In particular, there exist exactly 
$1044 = 5363- 4319$ Gorenstein $4$-dimensional polytopes of 
index $2$ which are not pyramids over $3$-dimensional reflexive polytopes. 
However, all these  facts seem to be very 
difficult to check without computer.

It follows from a  recent result of Haase, Nill and Payne  
\cite[Cor. 3.2]{HNP08} 
together with the main result in \cite{Bat06}
that 
every  $n$-dimensional 
Gorenstein polytope $P$ of degree $d$ is a pyramid if 
\[ n \geq  4d { 2d + V-1 \choose 2d }      ,\]
where 
\[ V = (2d-1)^{2d-1} \cdot \left( (2d-1)! \right)^{2d} \cdot 14^{(2d-1)^2 
\cdot 2^{2d}}. \]   
In \cite[Prop. 1.5]{Nil08} Nill has shown 
that the same statement holds true already 
for 
\[  n \geq (V-1)(2d+1). \]
However, these inequalities are still too 
far from being sharp. 
It would be nice to know exactly  the maximal dimension $n$ 
(depending on $d$) such 
that every $n$-dimensional 
Gorenstein polytope of degree $d$ is a pyramid. 
In this connection  we suggest the following: 

\begin{conj} \label{conj-d}
Every $3d$-dimensional Gorenstein polytope $P$ of degree $d$ is 
a pyramid. Moreover, there exists up to an isomorphism a 
unique $(3d-1)$-dimensional Gorenstein polytope $\Theta_d$ of degree $d$ 
which is not a pyramid. 
\end{conj}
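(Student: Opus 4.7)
\medskip

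\noindent\textbf{Proof proposal.} The plan is to combine an explicit construction of the candidate extremal polytope $\Theta_d$ with a sharp strengthening of the known Cayley-decomposition bounds. A natural candidate for $\Theta_d$ is the Cayley sum of $2d$ copies of a $d$-dimensional lattice polytope $P_0$ whose $2d$-fold dilation (after suitable translation) is reflexive: such a Cayley sum lies in dimension $d + 2d - 1 = 3d - 1$, is Gorenstein of index $2d$ and hence of degree $d$, and, being a nontrivial Cayley sum in which every factor contains at least two lattice points, is not a lattice pyramid. For $d = 1$ this should recover the unit square $[0,1]^2$, which one checks directly is the unique $2$-dimensional non-pyramid Gorenstein polytope of degree $1$; for $d = 2$ it should reproduce the unique $5$-dimensional entry in the classification table of the present paper.

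To show that every $3d$-dimensional Gorenstein polytope $P$ of degree $d$ is a pyramid, I would argue as follows. The theorems of Haase, Nill and Payne \cite{HNP08} together with \cite{Bat06} and \cite{Nil08} already guarantee that once $n$ is large compared with $d$, the polytope $P$ admits a nontrivial Cayley decomposition $P = P_1 * \cdots * P_k$ with $k \geq 2$; moreover $P$ is a lattice pyramid precisely when one of the $P_i$ degenerates to a single lattice point. Under such a decomposition one has $\sum_{i=1}^k \dim P_i = n - k + 1$, while the symmetry $h^*_i(P) = h^*_{d-i}(P)$ of the Gorenstein $h^*$-vector together with the multiplicativity of $h^*$-polynomials under Cayley sums should force $\sum_i \dim P_i \leq 2d$ unless some $P_i$ is a point. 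Combining these would yield $n \leq 3d - 1$ in the non-pyramid case, which is the first claim.

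Uniqueness of $\Theta_d$ in dimension $3d - 1$ would then follow by chasing the equality case of this inequality: it forces $k = 2d$ and each Cayley factor $P_i$ to be a $d$-dimensional lattice polytope of the minimal admissible Ehrhart type. A short combinatorial analysis at this extremal point should leave only one isomorphism class of factor $P_0$, pinning down $\Theta_d$ uniquely up to lattice isomorphism.

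The hard part will be establishing the sharp Cayley inequality in the second step. The currently available bounds $n \geq 4d\binom{2d+V-1}{2d}$ and $n \geq (V-1)(2d+1)$ with the astronomically large constant $V$ are enormously far from the conjectured threshold $3d$, so bridging the gap appears to require a genuinely new Ehrhart-theoretic estimate that exploits Gorenstein duality in an essential way, rather than the rough bounds underlying the existing proofs. A secondary difficulty will be ruling out exotic, non-simplex Cayley factors in the equality case, which is needed for the uniqueness part of the conjecture.
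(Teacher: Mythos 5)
First, be aware that the statement you are addressing is Conjecture \ref{conj-d} of the paper, not a theorem: the authors verify it only for $d\le 2$ ($d=1$ is elementary, $d=2$ follows from the complete classification \ref{class-GorDP}), and they explicitly leave $d\ge 3$ open. Your text is a programme rather than a proof, and by your own admission the decisive step --- a sharp inequality forcing $n\le 3d-1$ for non-pyramid Gorenstein polytopes of degree $d$ --- is exactly what is missing; the bounds of Haase--Nill--Payne and Nill that you quote are nowhere near this threshold, so nothing in the proposal closes that gap.

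Beyond that, two concrete steps fail as written. (1) Your candidate $\Theta_d$ does not exist for $d\ge 2$: if $2d\,P_0$ is reflexive for a $d$-dimensional lattice polytope $P_0$, then $P_0$ is a Gorenstein polytope of index $2d$, whereas the index of a $d$-dimensional Gorenstein polytope is $d+1-\deg P_0\le d+1<2d$. The correct candidate (Example \ref{special-d}) is the Cayley polytope of the $2d$ \emph{segments} $[0,\pm e_i]$, $1\le i\le d$, whose Minkowski sum is the $d$-cube $[-1,1]^d$; by \ref{CayleyGorenstein} it is a $(3d-1)$-dimensional Gorenstein polytope of degree $d$, and for $d=2$ it is the polytope $R_1$ of the classification --- a Cayley sum of four segments, not of four planar polygons. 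Your construction agrees with this only for $d=1$. (2) The inequality you hope to extract from a Cayley decomposition rests on two incorrect assertions: for an $n$-dimensional Cayley polytope $P_1*\cdots*P_k$ one has $\dim(P_1+\cdots+P_k)=n-k+1$, not $\sum_i\dim P_i=n-k+1$ (the factors may all be full-dimensional in the common space), and $h^*$-polynomials are not multiplicative under Cayley sums (the unit square $[0,1]*[0,1]$ has $h^*=1+t$ while each factor has $h^*=1$). Consequently the derivation of $\sum_i\dim P_i\le 2d$, and hence of $n\le 3d-1$ in the non-pyramid case, is unsupported, and the equality-case analysis for uniqueness (which must single out segment factors summing to a reflexive polygonal/cubical configuration) is likewise not in place; making these steps precise is essentially the open content of the conjecture itself.
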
 

Conjecture  \ref{conj-d} is easy to check for $d =1$. Our classification 
verifies Conjecture   \ref{conj-d} in the case  $d =2$, but  it
 remains  open for  $d \geq 3$. A precise description of   
the special Gorenstein  polytope $\Theta_d$ for arbitrary degree $d$ 
is  given in  \ref{special-d}.

\medskip
\bigskip

Let us summarize the   ideas used in our   classification 
of $n$-dimensional Gorenstein polytopes of degree $2$. 

First of all we use  a natural partial order on the set 
of $n$-dimensional Gorenstein polytopes of degree $2$: for two lattice  
 polytopes $P_1$ and $P_2$, we write 
$P_1  \preccurlyeq   
P_2$ if $P_1$ is isomorphic to a lattice subpolytope of $P_2$. 
We say that a $n$-dimensional Gorenstein polytope $P$ of degree $2$ is 
{\sl minimal} if there does not exist  a   
$n$-dimensional 
Gorenstein subpolytope $P' 
 \preccurlyeq P$ of degree $2$ such that $P' \neq P$. 
For  $n$-dimensional Gorenstein polytopes of fixed degree $d$ there 
exists a combinatorial duality 
 which generalize
the polar duality for $n$-dimensional reflexive polytopes \cite{BN08}. 
Using this duality, we obtain that if $P$ is a minimal  
$n$-dimensional Gorenstein polytope of degree $2$, then the dual 
Gorenstein polytope $P^*$ is a {\sl maximal} one, i.e., $P^*$ is not 
contained in a strictly larger $n$-dimensional Gorenstein polytope 
of degree $2$. Since every $n$-dimensional Gorenstein polytope is 
contained in a  maximal one, in the  classification of  
all $n$-dimensional Gorenstein polytopes of degree $2$ the first step 
is to find  all minimal ones among them.  After that, using the 
duality $P \leftrightarrow P^*$, 
we immediately obtain all maximal $n$-dimensional 
Gorenstein polytopes 
of degree $2$. Finally, all remaining Gorenstein polytopes can be found 
as lattice subpolytopes of the maximal ones. 

Let $P$ be a $n$-dimensional lattice polytope. The integer $$\nv(P):= 
n! ({\rm volume} \;{\rm of} \; P)$$ 
we call the {\sl lattice normalized volume of} $P$.  
In the classification of minimal Gorenstein polytopes of degree $2$ the 
following statement  is very 
important: if $P$ is minimal, then 
$\nv(P) \leq 4$. 
The proof of this technical  statement consists of several steps and 
uses  the complete classification of
$n$-dimensional lattice polytopes whose $h^*$-polynomial 
is of degree $\leq 1$
\cite{BN07}.
As we already remarked, the classification of all $n$-dimensional Gorenstein 
poytopes $P$ of degree $2$ is
equivalent to the  classification of  all $n$-dimensional lattice
polytopes $P$ having  the  Ehrhart series 
\[ E_P (t) = \frac{1 + a_Pt + t^2}{(1-t)^{n+1}}, \quad a_P \in \Z_{\geq 0},\] 
where  $a_P = {\rm Vol}_{\N}(P) -2$. If $X_P$ is the corresponding 
$n$-dimensional Gorenstein toric Del Pezzo variety and $-K_{X_P} = (n-1)L$
for some Cartier divisor $L$, then the integer 
$a_P+2 =  {\rm Vol}_{\N}(P)$ equals the intersection number  $L^n$ and
the  Ehrhart series  $E_P (t)$ is exactly the Hilbert-Poincar{\'e} 
series of the homogeneous coordinate ring of $X_P$:
$$S_P := \bigoplus_{ k =0}^{\infty} H^0(X_P, {\mathcal O}(kL)).$$ 
The graded Gorenstein ring $S_P$ can be seen as  a semigroup algebra of the 
set of lattice points in the $(n+1)$-dimensional cone $C_P$ over the 
Gorenstein polytope $P$. If $P$ is a Gorenstein polytope such that 
 $\nv(P) \leq 4$ (i.e. $a_P \leq 2$), then one can apply  
algebraic ideas from \cite{Bat06} and classify all possible 
Gorenstein rings $S_P$ (and the corresponding Gorenstein polytopes $P$) 
by an  enumeration of all binomial relations 
among the minimal generators of $S_P$. 

Let us say some words about the way of 
presentation  of the results of our 
classification. It is not difficult to draw pictures of 
all $3$-dimensional Gorenstein polytopes of degree $2$. However, the same way 
of presentation can not be applied in dimension $n \geq 4$. 
It was shown in \cite[Th. 2.6]{BN08} that if a $d$-dimensional 
reflexive polytope  $\Delta$ is a Minkowski sum of $r$ lattice polytopes 
$\Delta_1, \ldots, \Delta_r$:
\[ \Delta = \Delta_1 + \cdots + \Delta_r, \]
then the $(d+ r-1)$-dimensional Cayley polytope $P:= \Delta_1 * \cdots * 
\Delta_r$ is a Gorenstein polytope of degree $d$ (or, of index $r$). 
In particular, a Minkowski sum decomposition $\Delta = \Delta_1 + 
\cdots + \Delta_r$ of a $2$-dimensional reflexive 
polytope $\Delta$ defines a $(r+1)$-dimensional Gorenstein 
polytope  $P= \Delta_1 * \cdots * 
\Delta_r$ of degree $2$. 
The classification of all  Minkowski sum decompositions 
of reflexive polygons was recently obtained by Emiris and Tsigaridas 
\cite{ET06}. 
Our classification shows that all Gorenstein polytopes of degree $2$ and 
of dimension $ n \geq 4$ are 
Cayley polytopes corresponding to  different  Minkowski sum decompositions 
of $2$-dimensional reflexive polytopes.  The same statement holds true 
for almost all (except one) 
$3$-dimensional Gorenstein polytopes of degree $2$.

\bigskip

The paper is organized as follows: 

In Section  \ref{basics} we review 
basic definitions and properties of Gorenstein  polytopes. 
The combinatorial duality of Gorenstein polytopes $P \leftrightarrow P^*$ 
and their constructions 
as Cayley polytopes corresponding to Minkowski summands of reflexive polygones 
will be of our primary interest.

In Section 2 we investigate the graded Gorenstein $K$-algebra  
$S_P$ associated with a 
Gorenstein polytope $P$ of degree $d$. Let $A_P:= S_P/\langle y \rangle$
be the Artinian 
Gorenstein $K$-algebra  obtained from $S_P$ 
as quotient modulo an  ideal $\langle y \rangle $ generated by a 
regular sequence 
of homogeneous elements $y_1, \ldots, y_n$ of degree $1$. 
If the degree of $P$ is $2$ then $1 + a_Pt + t^2$ is 
the  
Hilbert-Poincar\'e polynomial of $A_P$ and the   
graded Gorenstein $K$-algebras $A_P$ can be classified up to 
isomorphism (see \ref{Gor1a1}). 
This classification allows to determine all possible binomial relations 
in the $K$-algebra $S_P$. If $a_P \leq 2$, then we find  a 
complete list of Gorenstein polytopes $P$ which are not pyramids 
(see \ref{class-aleq2}).

In Section 3 we describe general combinatorial properties 
of Gorenstein polytopes of degree $2$. In particular, we 
prove  some useful  equalities for $3$-dimensional  Gorenstein 
polytopes of degree $2$. 
It is well known  that if $\Delta$ and $\Delta^*$ are dual to each other 
reflexive polygons then 
\[ \nv( \Delta) +  \nv(\Delta^*)  =12. \]
This remarkable 
property of reflexive polygons was recently 
investigated by many mathematicians 
\cite{PR00,HS02,RST05}. We show that there exists a similar relation 
between the number 
12  and dual to each other $1$-dimensional faces $E \subset P$, 
$E^* \subset P^*$ of the 
 $3$-dimensional Gorenstein polytopes $P$, $P^*$ of degree  $2$ 
\[\sum_{E \subset  P, \dim E=1} \nv(E) \cdot
  \nv(E^{\ast}) = 12. \]
(see \ref{12-3dim}). 

In Section 4 we prove that if a Gorenstein polytope $P$ of degree $2$ is
minimal then  $\nv(P) \leq 4$. As it was explained above, together with 
the classification in Section 2 this result easy implies a complete 
 classification of Gorenstein polytopes of degree $P$. Here we present 
the whole list of $d$-dimensional Gorenstein polytopes of
degree  $2$ which are not pyramids (including some of their
combinatorial properties).

In Section \ref{almostDP} we compare 
our combinatorial method with 
the birational method of Jahnke and Peternell used in their 
classification of almost Del Pezzo manifolds  \cite{JP08}. 
\bigskip

{\bf Acknowlegements}. We are very  grateful to J\"urgen Hausen, Priska 
Jahnke and Benjamin Nill for their interest in our work and helpful 
comments.

\section{General properties of  Gorenstein polytopes}
\label{basics}

By a {\sl lattice} $M$ of dimension $n$ we mean a free abelian group
of rank $n$, $M \cong \mathbb{Z}^n$. We also consider the  
{\sl dual lattice} $N =
M^{\ast} = \Hom (M, \mathbb{Z}) \cong \mathbb{Z}^n$ together with the  
canonical pairing \[ \langle \:\: , \:\: \rangle
: M \times N \rightarrow \mathbb{Z}. \] Further, we define 
$M_{\mathbb{Q}} := M \otimes_{\mathbb{Z}} \mathbb{Q}$, $N_{\mathbb{Q}} := N 
\otimes_{\mathbb{Z}} \mathbb{Q}$ together  
with the $\mathbb{Q}$-bilinear pairing
$\langle \:\: , \:\: \rangle : M_{\mathbb{Q}} \times N_{\mathbb{Q}}
\rightarrow \mathbb{Q}$. 
The real vector spaces 
 $M_\mathbb{R}$ and  $N_\mathbb{R}$  are defined analogously. 
A {\sl polytope} $P$ in $M_{\mathbb{R}}$ is the convex hull
of a finite subset in $M_{\mathbb{R}}$. We say that $P$ is a 
{\sl lattice polytope} if all 
its vertices lie in $M$. Further, a {\sl face} of $P$ is a subset
$F \subseteq P \subset M_{\R}$ which minimizes on $P$ some linear function 
$f \in N_{\R}$,
and a {\sl facet} of $P$ is a codimension-$1$ face of $P$.
By $\partial P$ we denote the boundary of the polytope $P$ and
by $\inter(P )$ the interior
$P \backslash \partial P$ of $P$.

Let $P$ be a $n$-dimensional lattice polytope in $M_{\mathbb{R}}$.  
For any nonnegative integer $k$ the number $i(P,k) :=
|kP \cap M|$  is a polynomial of degree $n$ in $k$  \cite{Ehr67}. 
This polynomial is   called the
{\sl Ehrhart polynomial} of $P$. Its values in negative integers 
are defined by the formula 
\[ i(P, -k) = (-1)^{n} | \inter (kP) \cap M|.\]
Moreover, one has 
\[ E_{P}(t) := \sum_{k \geq 0} i (P,k) t^k = \frac{h_0^* + h_1^* t +
  \cdots + h_n^* t^n}{(1-t)^{n+1}} \] 
for some nonnegative integers $h_i^*$ \cite{Sta80}.
The Ehrhart polynomial  $i(P,k)$ can be
recovered from the integers $h_i^*$ by the formula: 
\[ i(P,k) = \sum_{j= 0}^n h_j^* {n+k-j \choose n}.\]

\begin{dfn}
The polynomial
\[ h^{\ast}_{P} (t) := (1-t)^{n+1} E_P(t) \sum_{k \geq 0} | kP \cap
 M| t^n \] is called the $h^{\ast}$-{\sl polynomial} of a $n$-dimensional 
lattice polytope $P$. Its degree is called 
the {\sl degree of } $P$:
$\deg P := \max \{ i : h^*_i \not= 0\},  \;\; 0 \leq  \deg P \leq \dim P$. 
The vector 
$ h^*(P) = (h_0^*, h_1^*, \dots, h_{\deg P}^*)$ is called the $h^*$-{\sl
    vector} of $P$.
\end{dfn}

\begin{rem}\label{monotonyhvector}
{\rm   It was proved by  Stanley in \cite{Sta80} that the numbers 
$h^*_i = h^*_i(P)$ are monotone functions of $P$: 
if $P$ and $P'$ are two lattice polytopes and
  $P' \subseteq P$, then $h^*_i (P') \leq h_i^*(P)$ for all $i$.}
\end{rem}

\begin{dfn}\cite[Def. 2.1]{BN07}
  We call a $n$-dimensional lattice polytope $P$ ($n \geq 1$) a {\sl
    Lawrence prism with heights} $\theta_1,\dots,\theta_n$ if there exists a 
lattice basis $e_1,  \dots, e_n$ of $M$ and non-negative integers
  $\theta_1,\dots,\theta_n$ such that
  \[ P = \conv \{ 0, \theta_1 e_n, e_1, e_1 + \theta_2 e_n,  
\dots, e_{n-1}, e_{n-1} +
  \theta_{n} e_n  \}.\]
  The lattice vector $e_n$ is called the {\sl direction of} $P$.
\end{dfn}

\begin{dfn}\cite[Def. 2.2]{BN07}
  We call a $n$-dimensional lattice polytope $P$ ($n \geq 2$) {\sl
    exceptional} if there exists a lattice basis $e_1, \dots, e_n$ of
  $M$ such that
  \[ P = \conv \{0, 2 e_1, 2 e_2, e_3, \dots,
  e_n \}. \]
\end{dfn}

The following statement was proved  in \cite{BN07}:  

\begin{theo} \label{deg1}
Every  $n$-dimensional
  lattice polytope $P$ of degree $\leq 1$ is either a basic simplex, 
or an exceptional simplex, a  or Lawrence prism such that 
$\theta_1 + \cdots + \theta_n \geq 2$. The corresponding $h^*$-polynomial 
is equal respectively to $1$,  $1 + 3t$,  
$1 + (\theta_1 + \cdots + \theta_n -1)t$. 
\end{theo}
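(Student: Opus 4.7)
The plan is to split the analysis by the value of $\deg P$ and, for the case $\deg P = 1$, to induct on $n$ using Ehrhart reciprocity and the monotonicity of the $h^*$-vector (Remark~\ref{monotonyhvector}). If $\deg P = 0$, then $h^*_P(t) = 1$, so $\nv(P) = h_0^* + h_1^* = 1$ and $|P \cap M| = \binom{n+1}{n} = n+1$; a lattice polytope with exactly $n+1$ lattice points and normalized volume one must be the convex hull of an affine $\Z$-basis of $M$, hence a basic lattice simplex. So assume $\deg P = 1$, i.e. $h^*_P(t) = 1 + ht$ with $h \geq 1$, so that $\nv(P) = 1+h$ and $|P \cap M| = n+1+h$. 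If $n = 1$, then $P = [0,\theta_1]$ with $\theta_1 \geq 2$ and the claimed formula $h^*_P(t) = 1+(\theta_1-1)t$ is immediate; so assume $n \geq 2$. By reciprocity $h_n^* = |\inter(P) \cap M| = 0$, so $P$ is \emph{hollow}.

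I would now carry out a case distinction on whether $P$ is a simplex. Suppose first that $P$ is an $n$-simplex; translate a vertex to $0$ so $P = \conv\{0, v_1, \ldots, v_n\}$ with $|\det(v_1, \ldots, v_n)| = 1+h$. Hollowness, together with the presence of exactly $h \geq 1$ non-vertex lattice points on $\partial P$, severely restricts the primitive lattice vectors along the edges of $P$: picking a lattice basis adapted to such a boundary point, one can show by a direct lattice computation that the only non-basic $n$-simplex compatible with $h^*_P(t) = 1 + ht$ is, up to lattice isomorphism, the exceptional simplex $\conv\{0, 2e_1, 2e_2, e_3, \ldots, e_n\}$, for which an Ehrhart computation yields $h^*_P(t) = 1 + 3t$.

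If $P$ is not a simplex, the key geometric claim is that all vertices of $P$ lie on two parallel lattice hyperplanes $H_0, H_1 \subset M_\R$ at lattice distance one. Granting this, $P = Q_0 \ast Q_1$ is a Cayley polytope of the lower-dimensional lattice polytopes $Q_i := P \cap H_i$; the degree $1$ condition combined with the Ehrhart series of Cayley polytopes forces each $Q_i$ to have degree $0$, hence to be a basic simplex (possibly a point). After a unimodular change of coordinates one can take $Q_0 = \conv\{0, e_1, \ldots, e_{n-1}\}$ with the vertices of $Q_1$ being lattice translates along $e_n$, giving precisely a Lawrence prism with heights $\theta_1, \ldots, \theta_n$. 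The normalized volume is $\theta_1 + \cdots + \theta_n$, so the degree $1$ hypothesis is equivalent to $\theta_1 + \cdots + \theta_n \geq 2$, and a direct computation of $E_P(t)$ confirms $h^*_P(t) = 1 + (\theta_1 + \cdots + \theta_n - 1)\,t$.

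The main obstacle is the hyperplane claim for the non-simplex case: that any non-simplex hollow lattice polytope with $h^*$-polynomial of degree $1$ fits between two parallel lattice hyperplanes at lattice distance one. I would attack this by combining hollowness with monotonicity applied to lattice subpolytopes: if three vertices of $P$ were to lie on three distinct parallel lattice hyperplanes in some primitive direction $u \in N$, one could extract a lattice subpolytope $P' \subseteq P$ whose $h^*$-vector has a nonzero coefficient in degree $\geq 2$ (for instance by producing a lattice triangle of lattice width $\geq 2$ inside $P$), contradicting $h_i^*(P') \leq h_i^*(P) = 0$ for $i \geq 2$. Once the hyperplane claim is established, the Cayley decomposition and the reduction to Lawrence prisms are essentially formal, and all three $h^*$-polynomial formulas follow from the standard Ehrhart series of a basic simplex, the exceptional simplex, and the Cayley polytope of two basic simplices.
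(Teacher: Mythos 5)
Your overall strategy (split off degree $0$, then simplex versus non-simplex, with a lattice-width-one reduction in the non-simplex case) is in the spirit of the argument in \cite{BN07} -- note the paper under review does not reprove this theorem, it only quotes it from there -- but as written the proposal contains genuine errors at the two places where the real work happens. First, the simplex case: it is false that the only non-basic $n$-simplex of degree $1$ is the exceptional simplex. The simplex $\conv\{0,e_1,\dots,e_{n-1},\theta e_n\}$ with $\theta\geq 2$ is a Lawrence prism (all heights zero except one), is not basic, and has $h^*$-polynomial $1+(\theta-1)t$; so your intended dichotomy ``simplex $\Rightarrow$ basic or exceptional, non-simplex $\Rightarrow$ Lawrence prism'' does not match the theorem, and the ``direct lattice computation'' cannot exist as claimed. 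Moreover, hollowness plus counting boundary lattice points does not control the degree of a simplex at all: the Reeve simplices $\conv\{0,e_1,e_2,e_1+e_2+qe_3\}$ are hollow with only $n+1$ lattice points yet have $h^*=1+(q-1)t^2$, so any honest treatment of the simplex case must analyze lattice points of $2P$ (equivalently the box points of the half-open parallelepiped), which is precisely the delicate part of the proof in \cite{BN07}.

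Second, the non-simplex case has two gaps. (a) Even granting lattice width $1$, the slices $Q_0,Q_1$ need not have degree $0$: for $P=[0,1]\times[0,2]$ (a Lawrence prism of degree $1$) the only width-one direction gives two slices of normalized length $2$, i.e.\ of degree $1$. Monotonicity only yields $\deg Q_i\leq 1$, so you must induct on dimension and then prove that the two pieces are Lawrence prisms with a \emph{common} direction so that they assemble into a Lawrence prism -- this alignment step is missing. (b) The proposed proof of the width-one claim is incorrect: having three vertices on three distinct parallel lattice hyperplanes for \emph{some} primitive $u\in N$ is not an obstruction to width one (the unit square, of degree $1$, has this property for the diagonal functional), so no subpolytope of degree $\geq 2$ can be extracted from that hypothesis. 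What is actually needed is that a non-simplex of degree $1$ has lattice width $1$ with respect to some direction; since the exceptional simplex has width $2$, any such argument must use non-simplexness in an essential way, and this is again a substantive step (handled in \cite{BN07} by induction via projections), not a formal consequence of monotonicity. Until these points are repaired, the proposal does not establish the theorem.
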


\begin{dfn}
  For any $r$ convex polytopes $\Delta_1, \dots, \Delta_r$ in 
$M_{\R}$, their {\sl Minkowski sum} is defined as
  \[ \Delta := \Delta_1 + \cdots + \Delta_r = \{ x_1 + \cdots + x_r
  \mid x_i \in \Delta_i\}. \] 
The polytopes $\Delta_i$ $( i=1, \ldots, r)$ are called {\sl 
Minkowski  summands}
  of $\Delta$. Here we do not require all polytopes $\Delta_1,
  \dots, \Delta_r$ to have the maximal dimension $n = \dim M_{\R}$.
Define the lattice $\widetilde{M} := M \oplus \mathbb{Z}^r = M \oplus
\mathbb{Z} e_1 \oplus \cdots \oplus \mathbb{Z} e_r$, where $\{ e_1, \dots,
e_r\}$ form a lattice basis of $\mathbb{Z}^r$.
The {\sl Cayley polytope} $\Delta_1 * \cdots * \Delta_r$ associated with
  $\Delta_1, \dots, \Delta_r$ is the convex hull of $(\Delta_1, e_1),
  \dots, (\Delta_r, e_r)$ in $\widetilde{M_{\mathbb{R}}}$.
\end{dfn}

\begin{dfn}
  Let $\Delta \subset M_{\mathbb{R}}$ be a lattice polytope of dimension $n$. 
 We set $\Delta_0 := \Delta$ and choose  $r$ $0$-dimensional 
lattice polytopes  $\Delta_1,  \ldots, \Delta_r$ (points). 
Then the polytope $\Delta_0 * \Delta_1 * \cdots * 
\Delta_r$ is called {\sl
 $r$-fold pyramid over $\Delta$} and we denote it by $\Pi^r(\Delta)$. 
If $r =1$ we write simply $\Pi(\Delta)$ and call  $\Pi(\Delta)$ {\sl 
pyramid over $\Delta$}. 
\end{dfn}

\begin{rem} 
{\rm 
A Lawrence prism with  heights $\theta_1,\dots,\theta_n$ can be considered 
as Cayley polytope of $n$
$1$-dimensional polytopes  
$[0, \theta_1], \dots, [0, \theta_n] \subset \R$. 
A $n$-dimensional  exceptional
simplex is the $(n-2)$-fold pyramid over the 
lattice triangle ${\rm conv}\{0, 2e_1, 2e_2 \}$.}
\end{rem}

\begin{dfn} \label{dual-pol}
Consider a $d$-dimensional convex polytope $\Delta$ in $M_{\mathbb{R}}$
 that contains the zero point $0 \in M_{\mathbb{R}}$ in its interior.  
The polytope 
  \[ \Delta^* = \{ y \in N_{\mathbb{R}} \mid \langle x,y \rangle \geq
  -1 \quad \forall x \in \Delta\} \] is called {\sl dual polytope} of
  $\Delta$.

  A $d$-dimensional lattice polytope $\Delta$ in $M_{\mathbb{R}}$ is
  called {\sl reflexive} if $0 \in \Delta$ and the dual polytope
  $\Delta^*$ is again a
  lattice polytope. 
  If $\Delta$ contains a single interior lattice point $m$ such that
  $\Delta - m$ is reflexive, we say that $\Delta$ is {\sl reflexive
    with respect to $m$}.
\end{dfn}

\begin{rem} 
{\rm If $\Delta$ is reflexive polytope, then the dual 
polytope $\Delta^*$ is again reflexive and $(\Delta^*)^* = \Delta$. 
The duality $\Delta \leftrightarrow \Delta^*$ is very 
important for Mirror Symmetry \cite{Bat94,CK99}. }
\end{rem}

\begin{prop}\cite{Hib92} \label{ReflPolytope}
  Let $\Delta \subset M_{\mathbb{R}}$ be a lattice polytope. Then $\Delta$ is
  reflexive if and only if for every nonnegative integer $k$ 
the number of lattice points in $k\Delta$ equals
  the number of lattice points in the interior $\inter((k+1)\Delta)$ 
of $\Delta$.
\end{prop}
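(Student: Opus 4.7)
The plan is to work from the half-space description
\[ \Delta = \{ x \in M_\R : \langle x, v_F \rangle \geq -a_F \text{ for each facet } F\}, \]
where $v_F \in N$ is the primitive inner normal of $F$ and $a_F$ is a positive integer (valid once $0 \in \inter(\Delta)$); reflexivity then amounts to $a_F = 1$ for every $F$. The forward direction is a set-theoretic identity: if all $a_F = 1$, then $k\Delta = \{x : \langle x, v_F\rangle \geq -k\}$ and $\inter((k+1)\Delta) = \{x : \langle x, v_F\rangle > -(k+1)\}$, and for a lattice point $x \in M$ the integer $\langle x, v_F\rangle$ satisfies $\langle x, v_F\rangle > -(k+1)$ if and only if $\langle x, v_F\rangle \geq -k$, so $k\Delta \cap M = \inter((k+1)\Delta) \cap M$ even as sets, whence the cardinality equality.

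For the converse I first apply the hypothesis at $k = 0$ to get $|\inter(\Delta) \cap M| = i(\Delta, 0) = 1$, so $\Delta$ has a unique interior lattice point, which I translate to the origin. Since $0 \in \inter(\Delta)$ we have $k\Delta \subseteq \inter((k+1)\Delta)$ (write $k\Delta = \tfrac{k}{k+1}(k+1)\Delta$), so the inclusion $k\Delta \cap M \subseteq \inter((k+1)\Delta) \cap M$ is forced by the cardinality hypothesis to be a set-theoretic equality for every $k \geq 0$. Fix a facet $F$ with lattice distance $a_F \geq 1$. If I can exhibit, for all sufficiently large $k$, a lattice point $x_k \in \inter((k+1)\Delta)$ with $\langle x_k, v_F\rangle = -(k+1) a_F + 1$, then $x_k \in \inter((k+1)\Delta) \cap M = k\Delta \cap M$ forces $\langle x_k, v_F\rangle \geq -k a_F$, whence $-(k+1)a_F + 1 \geq -k a_F$, i.e.\ $a_F \leq 1$, so $a_F = 1$, as required.

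The main obstacle is constructing such an $x_k$. Since $v_F \in N$ is primitive there exists $z \in M$ with $\langle z, v_F\rangle = 1$; the natural candidate $x_k := (k+1) y + z$ for a lattice vertex $y$ of $F$ automatically satisfies $\langle x_k, v_F\rangle = -(k+1) a_F + 1$, but its membership in $(k+1)\Delta$ requires $\langle x_k, v_{F'}\rangle \geq -(k+1)a_{F'}$ for every other facet $F'$, which may fail when $y$ lies on $F \cap F'$. I plan to circumvent this either by replacing $y$ with a lattice point in the relative interior of $k' F$ for a sufficiently large auxiliary $k'$ (so that the strict inequalities $\langle y, v_{F'}\rangle > -k' a_{F'}$ absorb the $z$-perturbation when $k$ is taken large), or by a direct volume argument: the slice $(k+1)\Delta \cap \{\langle \cdot, v_F\rangle = -(k+1)a_F + 1\}$ is an $(n-1)$-dimensional polytope of volume $\Theta(k^{n-1})$ inside the rank-$(n-1)$ affine sublattice $\{x \in M : \langle x, v_F\rangle = -(k+1)a_F + 1\}$ (the sublattice has rank $n-1$ precisely because $v_F$ is primitive), and must therefore contain a lattice point for $k$ large enough.
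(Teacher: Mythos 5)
Your proof is essentially correct, and it takes a genuinely different route from the one the paper relies on: the paper gives no argument at all, citing Hibi, and the proof behind that citation runs through Ehrhart--Macdonald reciprocity --- the counting hypothesis says exactly $i(\Delta,k)=(-1)^n i(\Delta,-k-1)$ for all $k\geq 0$, which is the functional equation equivalent to the symmetry $h^*_i=h^*_{n-i}$ recorded in the corollary immediately following the proposition, and Hibi's theorem identifies that symmetry with reflexivity. You instead argue directly from the facet presentation $\Delta=\{x:\langle x,v_F\rangle\geq -a_F\ \forall F\}$: the forward direction is the clean observation that for $x\in M$ the conditions $\langle x,v_F\rangle\geq -k$ and $\langle x,v_F\rangle>-(k+1)$ coincide when all $a_F=1$; for the converse you correctly use $k=0$ to locate the unique interior lattice point, translate it to the origin (which is the right reading of the statement, matching the paper's notion ``reflexive with respect to $m$''), upgrade the cardinality hypothesis to the set equality $k\Delta\cap M=\inter((k+1)\Delta)\cap M$ via the inclusion $k\Delta\subset\inter((k+1)\Delta)$, and then squeeze $a_F\leq 1$ out of one well-placed lattice point. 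What each approach buys: the reciprocity route is shorter given the Ehrhart machinery the paper already quotes; yours is elementary and self-contained, using nothing beyond the facet inequalities.

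Two points in your final construction should be tightened. In the ``volume'' variant, volume of order $k^{n-1}$ alone does not force a lattice point (a long thin slab has large volume and may miss the lattice); what actually saves you is that the slice contains a translate of a dilate of $F$ with scale factor tending to infinity --- for instance $(k+1)\bigl((1-s)F+sq\bigr)$ with $q\in\inter\Delta$ fixed and $s$ of order $1/k$ --- so its inradius inside the affine hyperplane grows beyond the covering radius of the rank-$(n-1)$ affine lattice; moreover this dilate lies in $\inter((k+1)\Delta)$, a point you must also verify, since an arbitrary lattice point of the slice could sit on another facet. In the perturbation variant, the cleanest bookkeeping is to fix once a lattice point $y$ in the relative interior of $k_0F$, with margins $\epsilon_{F'}=\langle y,v_{F'}\rangle+k_0a_{F'}>0$ for $F'\neq F$, and take $x=ty+z$ with $k+1=tk_0$ and $t$ large enough that $t\epsilon_{F'}>|\langle z,v_{F'}\rangle|$ for all $F'\neq F$; note also that a single such $k$ suffices, since $-(k+1)a_F+1\geq -ka_F$ already gives $a_F\leq 1$. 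With either repair the argument is complete.
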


\begin{coro}
  A $n$-dimensional lattice polytope $\Delta$ is reflexive 
if and only if $$h_i^*(\Delta) =
  h_{n-i}^*(\Delta), \;\; 0 \leq i \leq {\frac{n}{2}}.$$
\end{coro}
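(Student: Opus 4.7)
The plan is to translate the Hibi criterion (Proposition~\ref{ReflPolytope}) into the desired palindromic symmetry of the $h^*$-polynomial via Ehrhart--Macdonald reciprocity. The target symmetry $h_i^*(\Delta) = h_{n-i}^*(\Delta)$ for all $0 \leq i \leq n/2$ is equivalent to the polynomial identity $h^*_\Delta(t) = t^n h^*_\Delta(1/t)$, so it suffices to show that the Hibi condition is equivalent to this identity.

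First I would package the pointwise Hibi condition into a generating-function statement. Multiplying the equality $|k\Delta \cap M| = |\inter((k+1)\Delta) \cap M|$ by $t^{k+1}$ and summing over $k \geq 0$, one obtains that $\Delta$ is reflexive if and only if
\[ t \cdot E_\Delta(t) = E_\Delta^\circ(t), \]
where $E_\Delta^\circ(t) := \sum_{k \geq 1} |\inter(k\Delta) \cap M|\, t^k$ is the interior Ehrhart series.

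Next I would invoke Ehrhart--Macdonald reciprocity $i(\Delta,-k) = (-1)^n |\inter(k\Delta) \cap M|$ for $k \geq 1$, recalled in Section~\ref{basics}. A short manipulation of binomial coefficients (using $\binom{-m}{n} = (-1)^n \binom{m+n-1}{n}$) yields the rational-function identity
\[ E_\Delta^\circ(t) \;=\; (-1)^{n+1} E_\Delta(1/t) \;=\; \frac{t^{n+1}\, h^*_\Delta(1/t)}{(1-t)^{n+1}}. \]
Substituting $E_\Delta(t) = h^*_\Delta(t)/(1-t)^{n+1}$ into the Hibi-equivalent equation $t E_\Delta(t) = E_\Delta^\circ(t)$ and clearing the common denominator $(1-t)^{n+1}$ reduces the condition to
\[ t \cdot h^*_\Delta(t) \;=\; t^{n+1}\, h^*_\Delta(1/t), \]
which is precisely $h^*_\Delta(t) = t^n h^*_\Delta(1/t)$. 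Both implications of the corollary follow from this chain of equivalences.

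The only real subtlety is casting Ehrhart--Macdonald reciprocity as a rational-function identity, which requires interpreting $E_\Delta(1/t)$ carefully --- either by using the polynomial formula $i(\Delta,k) = \sum_j h^*_j \binom{n+k-j}{n}$ extended to negative integers, or by expanding the rational function $E_\Delta$ around $t = \infty$. Once this is in place, every remaining step is a mechanical algebraic simplification.
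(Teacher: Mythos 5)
Your proof is correct and follows exactly the route the paper intends: the corollary is stated as an immediate consequence of Hibi's criterion (Proposition \ref{ReflPolytope}) combined with the reciprocity formula $i(\Delta,-k)=(-1)^n|\inter(k\Delta)\cap M|$ recalled earlier in the section, which is precisely the chain of equivalences $tE_\Delta(t)=E^\circ_\Delta(t)\Leftrightarrow h^*_\Delta(t)=t^n h^*_\Delta(1/t)$ that you carry out. No gaps; the generating-function manipulation and the interpretation of $E_\Delta(1/t)$ are handled correctly.
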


\begin{dfn}
Let $r$ be a positive integer. 
A $n$-dimensional lattice polytope $P$ is called 
{\sl Gorenstein of index $r$} if  $ rP$ is a reflexive polytope with respect 
to some interior lattice point $m \in {\rm int}(rP)$. 
\end{dfn}

Analogous to  \ref{ReflPolytope}, one obtains:

\begin{prop}\label{Gor-Polytope}
  Let $P \subset M_{\mathbb{R}}$ be a $n$-dimensional 
lattice polytope. Then $P$ is
  a Gorenstein polytope of index $r$ 
 if and only if for every nonnegative integer $k$ 
the number of lattice points in $kP$ equals
  the number of lattice points in the interior $\inter((k+r)P)$ 
of $\Delta$. In this case, one has $\deg P = n +1 - r$. 
\end{prop}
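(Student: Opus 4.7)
The plan is to reduce the statement to Hibi's criterion (Proposition~\ref{ReflPolytope}) applied to the translated polytope $rP - m$, invoking the fact that $P$ is Gorenstein of index $r$ precisely when $rP - m$ is reflexive (in the classical sense, with $0$ in its interior) for some lattice point $m \in \inter(rP) \cap M$. Since translation by a lattice vector preserves lattice-point counts, this translation is the pivot that converts point-counting statements on $rP$ into reflexivity of $rP - m$.

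For the forward direction I assume $rP - m$ is reflexive and apply Proposition~\ref{ReflPolytope} to it to obtain
\[ |j(rP - m) \cap M| = |\inter((j+1)(rP - m)) \cap M| \quad \text{for all } j \geq 0. \]
Translating by $jm$ on the left and by $(j+1)m$ on the right yields
\[ |(jr)P \cap M| = |\inter((jr + r)P) \cap M|, \]
i.e.\ the desired equality at $k = 0, r, 2r, \ldots$. To extend it to every $k \geq 0$, I would observe that by Ehrhart--Macdonald reciprocity both sides are restrictions of polynomials of degree at most $n$ in $k$ (namely $i(P,k)$ and $(-1)^n i(P, -k-r)$); two polynomials of degree $\leq n$ that coincide on an infinite set must be equal, so the equality holds for all $k \geq 0$. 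For the converse, setting $k = 0$ in the hypothesis gives $|\inter(rP) \cap M| = 1$, so $rP$ has a unique interior lattice point $m$; specializing the hypothesis to $k = jr$ and translating by $jm$, $(j+1)m$ turns it into the Hibi condition for $rP - m$, which by Proposition~\ref{ReflPolytope} must then be reflexive, i.e.\ $P$ is Gorenstein of index $r$.

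For the degree formula, I would pass to the $h^{\ast}$-polynomial. Combining $E_P(t) = h^{\ast}_P(t)/(1-t)^{n+1}$ with the standard companion identity $\sum_{k \geq 1} |\inter(kP) \cap M|\, t^k = t^{n+1} h^{\ast}_P(1/t)/(1-t)^{n+1}$, the proven equivalence, together with the vanishing $|\inter(mP) \cap M| = 0$ for $1 \leq m < r$, translates to the palindromy $t^{n+1} h^{\ast}_P(1/t) = t^r h^{\ast}_P(t)$, i.e.\ $h^{\ast}_j = h^{\ast}_{n+1-r-j}$ and $h^{\ast}_j = 0$ for $j > n + 1 - r$. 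Since $h^{\ast}_0 = 1$, we get $h^{\ast}_{n+1-r} = 1 \neq 0$, so $\deg P = n + 1 - r$. The main obstacle is verifying the vanishing $|\inter(mP) \cap M| = 0$ for $m \in [1, r-1]$ without circularity; this can be bootstrapped from the formula $|\inter(mP) \cap M| = \sum_j h^{\ast}_j \binom{m+j-1}{n}$, which automatically vanishes once one knows $h^{\ast}_j = 0$ for $j > n+1-r$, via a careful coefficient comparison in the rational-function identity above.
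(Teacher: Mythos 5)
Your proof of the equivalence itself is correct, and it is exactly the argument the paper intends (the paper offers no proof of \ref{Gor-Polytope}, presenting it as ``analogous to'' \ref{ReflPolytope}): translating by the unique interior lattice point $m$ of $rP$ to reduce to Hibi's criterion, and then upgrading the resulting equality at $k=0,r,2r,\dots$ to all $k\geq 0$ via Ehrhart--Macdonald reciprocity and the fact that two polynomials of degree $\leq n$ agreeing on an infinite set coincide, is clean and complete; the converse at $k=0$, $k=jr$ is likewise fine.

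The genuine gap is in the degree statement, and it is the very point you flag. To get the palindromy $t^{n+1}h^{\ast}_P(1/t)=t^r h^{\ast}_P(t)$ you need $|\inter(jP)\cap M|=0$ for $1\leq j\leq r-1$, but your proposed justification is circular: you would deduce this vanishing from $h^{\ast}_j=0$ for $j>n+1-r$, which is precisely (part of) what the palindromy is supposed to deliver. Nor is the ``careful coefficient comparison'' in the identity $t^r h^{\ast}_P(t)+(1-t)^{n+1}\sum_{j=1}^{r-1}c_j t^j=t^{n+1}h^{\ast}_P(1/t)$ automatic: comparing top coefficients kills the correction term only when $n$ is odd, while for $n$ even one only obtains relations such as $c_1=h^{\ast}_n$, and forcing $c_j=0$ then requires a nontrivial use of nonnegativity and integrality that you do not supply. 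The gap closes with a short geometric argument available once the equivalence is proved: suppose $x\in\inter(jP)\cap M$ with $1\leq j\leq r-1$; since $\inter(jP)+(r-j)P\subseteq \inter(rP)$, the points $x+(r-j)v$, as $v$ runs over the at least $n+1\geq 2$ vertices of $P$, are pairwise distinct lattice points in $\inter(rP)$, contradicting the uniqueness of the interior lattice point of the reflexive polytope $rP$. With this vanishing in hand, your generating-function computation does yield $h^{\ast}_j(P)=h^{\ast}_{n+1-r-j}(P)$ and $h^{\ast}_j(P)=0$ for $j>n+1-r$, hence $\deg P=n+1-r$ as claimed.
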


\begin{coro} \label{h*dual}
  Let $P$ be a $n$-dimensional lattice polytope with $h^*$-polynomial
  $h^*_P(t) = \sum_i h^*_i t^i$ and let $d = \deg P$.  Then 
$P$ is a Gorenstein polytope of degree $d = n+1 -r$ if and only if 
$$h^*_i(P) =
  h^*_{d-i}(P),  \;\; 0 \leq i \leq \frac{d}{2}.$$
\end{coro}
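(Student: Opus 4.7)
The plan is to derive the $h^*$-symmetry directly from Proposition \ref{Gor-Polytope} by rewriting the lattice-point equality as an identity of rational generating functions and then invoking Ehrhart--Macdonald reciprocity. The corresponding statement for reflexive polytopes (the corollary just above this one) is the special case $r=1$, $d=n$, and the general case is proved in the same spirit.

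First, I would multiply the equality $|kP\cap M|=|\inter((k+r)P)\cap M|$ of Proposition \ref{Gor-Polytope} by $t^k$ and sum over $k\ge 0$. The left-hand side becomes $E_P(t)=h^*_P(t)/(1-t)^{n+1}$. To handle the right-hand side, I would introduce the reversed polynomial $\bar h^*_P(t):=t^d h^*_P(1/t)$, a polynomial of degree $d$ with nonzero constant term $h^*_d$, and compute by direct substitution
\[ E_P(1/t)=(-1)^{n+1}\,\frac{t^{n+1-d}\,\bar h^*_P(t)}{(1-t)^{n+1}}. \]
Ehrhart--Macdonald reciprocity then yields
\[ \sum_{k\ge 1}|\inter(kP)\cap M|\,t^k=(-1)^{n+1}E_P(1/t)=\frac{t^{n+1-d}\,\bar h^*_P(t)}{(1-t)^{n+1}}, \]
so in particular $|\inter(kP)\cap M|=0$ for $1\le k<n+1-d$.

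Now the Gorenstein condition at $k=0$ reads $1=|\inter(rP)\cap M|$, which forces $r\le n+1-d$; combined with the vanishing above this gives $r=n+1-d$. Shifting the summation index by $r$ we get
\[ \sum_{k\ge 0}|\inter((k+r)P)\cap M|\,t^k=\frac{\bar h^*_P(t)}{(1-t)^{n+1}}. \]
Equating this with $h^*_P(t)/(1-t)^{n+1}$, the Gorenstein identity of Proposition \ref{Gor-Polytope} becomes the polynomial equality $h^*_P(t)=\bar h^*_P(t)$, i.e.\ $h^*_i(P)=h^*_{d-i}(P)$ for $0\le i\le d/2$. Each step is reversible: given this symmetry, the computation of $E_P(1/t)$ runs backwards and produces precisely the coefficient-wise identity of Proposition \ref{Gor-Polytope} with $r=n+1-d$, so $P$ is Gorenstein of index $n+1-d$.

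The only delicate point is the bookkeeping with the reversed polynomial and the shift index. One has to notice that the power of $t$ emerging from the rational rewriting of $E_P(1/t)$ is exactly $n+1-d$ and to identify this value with the Gorenstein index $r$ (which is not assumed a priori, only that $d=\deg P$) before comparing the two generating functions. Once this matching is set up the rest of the argument is a purely formal power-series manipulation.
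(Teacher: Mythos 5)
Your argument is correct and is exactly the route the paper intends: the corollary is stated as an immediate consequence of the Hibi-type criterion \ref{Gor-Polytope}, obtained by translating the counting identity $|kP\cap M|=|\inter((k+r)P)\cap M|$ into generating functions and applying Ehrhart--Macdonald reciprocity, just as in the reflexive case $r=1$, and your bookkeeping with $\bar h^*_P(t)=t^dh^*_P(1/t)$ and the shift by $n+1-d$ is accurate in both directions. One small caveat: the assertion that $1=|\inter(rP)\cap M|$ \emph{forces} $r\le n+1-d$ is stated without justification; it does hold, but only because the same generating function shows $|\inter((n+1-d+k)P)\cap M|\ge h^*_d\binom{n+k}{n}\ge n+1\ge 2$ for $k\ge 1$, so you should either add this one line or simply invoke the clause of Proposition \ref{Gor-Polytope} that already asserts $\deg P=n+1-r$, which makes the identification $r=n+1-d$ immediate.
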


\begin{dfn}  \cite{BB97}
{\rm 
Consider two $(n+1)$-dimensional lattices $\overline{M}:= M \oplus \Z$ and 
$\overline{N} := N \oplus \Z$ which are dual to each other with respect 
to the natural extension of the pairing  $\langle *, * \rangle \; :
{M} \times {N} \to \Z$ to $\overline{M} \times \overline{N} \to \Z$. 
For the latter we use the same notation  $\langle *, * \rangle$.  
Let   $\overline{M}_{\mathbb{R}}$ and 
$\overline{N}_{\mathbb{R}}$ be the corresponding real vector spaces.  

  A strongly convex $(n+1)$-dimensional 
cone $C \in \overline{M}_{\mathbb{R}}$ with the vertex 
$0 = -C \cap C$ is 
called {\sl Gorenstein cone} if
  there exists an element $n_{C} \in \overline{N}_{\mathbb{R}} $ 
such that $\langle e_i,
  n_{C} \rangle = 1 $ for all primitive lattice generators 
$e_i$ of $C$. Note that  $n_{C}$ is uniquely determined.

 If the cone $C$ is  Gorenstein, then the set
  \[ P_{C} = \{ x \in C \mid \langle x, n_{C}
  \rangle = 1\}
  \]
  is a convex $n$-dimensional 
lattice polyhedron which is called {\sl support} of
  $C$. 

On the other hand, if $P$ is a $n$-dimensional 
lattice polytope in $M_{\R}$, then 
\[ C_P:= \{ (\lambda x, \lambda)\; : \; \lambda \in \R_{\geq 0}, 
x \in P \}  \subset \overline{M}_{\mathbb{R}} \]  
is a Gorenstein cone with the support polytope $P$  
(here we have $n_{C_P} = (0,\cdots, 0,1)$).} 
\end{dfn}

\begin{dfn} \cite{BB97}
{\rm Define the {\sl dual cone} as 
\[ C^{\vee} : = \{  y \in \overline{N}_{\mathbb{R}}\; : \; 
\langle x, y \rangle \geq 0 \;\; \forall x \in C\}. \]
A $(n+1)$-dimensional 
Gorenstein cone $C$ is called {\sl reflexive} if its dual cone
$C^{\vee}$ is Gorenstein as well. In this case we denote by
$m_{C^{\vee}} \subset M$ the lattice point 
such that the
hyperplane $\langle m_{C^{\vee}}, y \rangle$  contains all lattice 
points generating  the dual cone
$C^{\vee}$.
For a reflexive Gorenstein cone, the uniquely defined positive integer
  \[ r_{C} = \langle m_{C^{\vee}}, n_{C} \rangle \] is
called  {\sl index} of $C$.}
\end{dfn}

\begin{rem} \label{refl-cone-dual}
{\rm It is very important for our purpose
that if $C$ is a  reflexive Gorenstein cone 
of index $r$ then   $C^{\vee}$ is also a reflexive Gorenstein cone 
of index $r$ . Moreover, a Gorenstein cone is reflexive of index $r$ 
if and only if its support is a Gorenstein polytope of index $r$
(see \cite[Prop. 1.11]{BN08}).}
\end{rem}

\begin{dfn}\label{DefGorPol}
Let $P \subset M_{\mathbb{R}}$ be a $n$-dimensional  
Gorenstein polytope of index $r$ and $C_P$ be  the corresponding 
$(n+1)$-dimensional 
reflexive Gorenstein cone of index $r$.  We define the {\sl dual 
$n$-dimensional Gorensten polytope} $P^*$ to be the support
of the dual reflexive Gorenstein cone $C^{\vee}_P$, i.e.,  
\[ C_{P^*} := C^{\vee}_P. \]
It follows immediately from the definition that $(P^*)^* =P$. 
\end{dfn}

\begin{rem} \label{dual-pyramid} 
{\rm It is easy to show that the duality for Gorenstein polytopes 
commutes with the pyramid construction, i.e., if $P$ and $P^*$ 
are dual to each other $n$-dimensional Gorenstein polytopes of index 
$r$, then $\Pi(P)$ and $\Pi(P^*)$ are dual to each other $(n+1)$-dimensional 
Gorenstein polytopes of index $r+1$.  In particular, 
a Gorenstein polytope $P$ of degree $d$ is a pyramid if and only if the dual
 Gorenstein polytope $P^*$ is a pyramid.
}
\end{rem}

\begin{theo}\cite[Th. 2.6]{BN08} \label{CayleyGorenstein}
 Let $\Delta_1, \dots, \Delta_r
  \subset M_{\mathbb{R}}$ be lattice polytopes. Assume that the 
 Minkowski sum $\Delta =  
  \Delta_1 + \cdots + \Delta_r$ is $n$-dimensional. Denote by 
  $\tilde{\Delta}$  the
  Cayley polytope $\Delta_1 \ast \cdots \ast \Delta_r$  
associated with $\Delta_1, \dots, \Delta_r$.  Then
  the following statements are equivalent:
  \begin{enumerate}
  \item $\tilde{\Delta}$ is a $(n+r-1)$-dimensional 
Gorenstein polytope of degree  $n$ (and index $r$),
  \item The polytope $\Delta$ is reflexive,
  \item The cone $C_{\tilde{\Delta}}$ associated with $\tilde{\Delta}$ is
    a reflexive Gorenstein cone of index $r$.
  \end{enumerate}
  \end{theo}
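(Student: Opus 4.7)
The plan is to first observe that the equivalence (1) $\Leftrightarrow$ (3) is immediate from Remark \ref{refl-cone-dual}: a lattice polytope is Gorenstein of index $r$ exactly when its associated cone is a reflexive Gorenstein cone of the same index. The substantive content is the equivalence (2) $\Leftrightarrow$ (3), which I would attack by explicitly computing the dual of $C := C_{\tilde{\Delta}}$.

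I would realize $C$ as the cone in $\widetilde{M}_{\R} = M_{\R} \oplus \R^r$ generated by the vectors $(v, e_i)$ as $v$ ranges over the vertices of $\Delta_i$ (the height variable for $C$ can be taken to be $y_1 + \cdots + y_r$, since this sum equals $1$ on $\tilde{\Delta}$). From this description it is clear that $C$ is Gorenstein with $n_C = (0, 1, \ldots, 1) \in \widetilde{N}$, and a short computation using the support function $h_{\Delta_i}(u) := -\min_{x \in \Delta_i} \langle x, u\rangle$ identifies the dual cone as
\[ C^{\vee} = \{(u, b) \in N_{\R} \oplus \R^r : b_i \geq h_{\Delta_i}(u) \text{ for all } i = 1, \ldots, r\}. \]
The crucial tool is the additivity of support functions under Minkowski sums, $h_{\Delta_1} + \cdots + h_{\Delta_r} = h_{\Delta}$, which will provide the bridge to reflexivity of $\Delta$.

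Next I would propose $m_{C^{\vee}} := (0, 1, \ldots, 1) \in \widetilde{M}$ as the Gorenstein point of $C^{\vee}$; one checks immediately that $\langle m_{C^{\vee}}, n_C \rangle = r$, matching the claimed index. To verify that this choice actually works, I would classify the primitive lattice generators of $C^{\vee}$ into two families: ``vertical'' rays $(0, e_i)$ for $i = 1, \ldots, r$, on which $m_{C^{\vee}}$ automatically pairs to $1$; and ``graph'' rays $(u, h_{\Delta_1}(u), \ldots, h_{\Delta_r}(u))$, one for each primitive facet-normal $u \in N$ of $\Delta$. These graph rays are lattice points because each $\Delta_i$ is a lattice polytope, and $m_{C^{\vee}}$ pairs with them to give exactly $h_{\Delta}(u)$. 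The requirement that this value equal $1$ for every primitive facet-normal of $\Delta$ is precisely the reflexivity of $\Delta$ (equivalently, $\Delta^*$ is a lattice polytope), yielding the desired equivalence.

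The main obstacle is a careful justification that the two families above exhaust the extreme rays of $C^{\vee}$. This reduces to the observation that $C^{\vee}$ is the epigraph of the vector-valued piecewise-linear function $u \mapsto (h_{\Delta_1}(u), \ldots, h_{\Delta_r}(u))$, whose non-differentiability locus is controlled by the common refinement of the normal fans of the $\Delta_i$, known to coincide with the normal fan of the Minkowski sum $\Delta$. A secondary but essentially automatic check is that the strong convexity of $C^{\vee}$, equivalent to $0 \in \inter(\Delta)$, is part of the reflexivity hypothesis.
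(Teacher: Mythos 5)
The paper does not prove this theorem at all: it is quoted verbatim from \cite[Th.\ 2.6]{BN08}, so there is no internal proof to compare with. Your computation — realizing $C=C_{\tilde{\Delta}}$ as the cone generated by the $(v,e_i)$, identifying $C^{\vee}=\{(u,b):b_i\geq h_{\Delta_i}(u)\}$, and using additivity of support functions under Minkowski sums — is exactly the standard route, and it does establish $(1)\Leftrightarrow(3)$ (via Remark \ref{refl-cone-dual}) and the implication $(2)\Rightarrow(3)$.

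The genuine gap is the converse $(3)\Rightarrow(2)$. You only test the particular candidate $m_{C^{\vee}}=(0,1,\dots,1)$ and observe that it is a Gorenstein point if and only if $h_{\Delta}(u)=1$ for every primitive facet normal $u$ of $\Delta$, i.e.\ iff $\Delta$ is reflexive with respect to the origin. But (3) only asserts that \emph{some} Gorenstein point $m_{C^{\vee}}=(m_0,c_1,\dots,c_r)$ exists, and reflexivity can at best be concluded up to a lattice translation (this is how (2) must be read: translating the $\Delta_i$ by lattice vectors changes neither (1) nor (3), but moves $\Delta$ off the origin — the formulation in \cite{BN08} says ``up to translation''). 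The mechanism you would presumably use to force $c_i=1$, namely pairing against the ``vertical'' rays $(0,e_i)$, fails in general: $(0,e_i)$ is an extreme ray of $C^{\vee}$ only when $\sum_{j\neq i}\Delta_j$ is still $n$-dimensional (for $r=1$ the point $(0,1)$ is even in the interior of $C^{\vee}$), and lower-dimensional summands are explicitly allowed. Moreover, the index hypothesis in (3) must genuinely be used: for $\Delta_1=[0,e_1]$, $\Delta_2=[0,e_2]$ in $\Z^2$ the Cayley cone is a reflexive Gorenstein cone of index $4$, with $m_{C^{\vee}}=(1,1,2,2)$, although $\Delta$ is the unit square, which is not reflexive; your outline never invokes the index except to check it for the proposed point. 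A correct completion: if $m_{C^{\vee}}=(m_0,c)$ exists, then $m_{C^{\vee}}\in\inter(C)$ (it pairs strictly positively with every generator of $C^{\vee}$), which forces $c_i\geq 1$; the index condition $\langle m_{C^{\vee}},n_C\rangle=\sum_i c_i=r$ then gives $c_i=1$ for all $i$, and pairing with the graph rays yields $\langle u,m_0\rangle+h_{\Delta}(u)=1$ for every primitive facet normal $u$, so that $\Delta-m_0$ is reflexive. Finally, your side remark that strong convexity of $C^{\vee}$ is equivalent to $0\in\inter(\Delta)$ is off: it is equivalent to $C$ being full-dimensional, which holds automatically once $\dim\Delta=n$.
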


This theorem shows a way for constructing examples of Gorenstein polytopes
using Minkowski sum decompositions of reflexive polytopes. 

\begin{exam}
{\rm Let $\Delta$ be the reflexive cube $[-1,1]^n \subset \R^n$. 
Then $\Delta$ is a Minkowski sum of $2n$ $1$-dimensional 
lattice polytopes $\Delta_i^{\pm}:= 
[0, \pm e_i]$ ($ i =1, \ldots, n)$, where 
$e_1, \ldots, e_n$ the standard basis of $\R^n$. The corresponding 
$(3n-1)$-dimensional Gorenstein polytope 
\[ \Delta_1^+ * \cdots * \Delta^+_n * 
\Delta_1^- * \cdots * \Delta^-_n \]
 of degree $n$ will be denoted by $\Theta_n$. 
\label{special-d}
} 
\end{exam} 

\begin{rem} 
{\rm In \cite{ET06}, I. Emiris and E. Tsigaridas computed all 
possible Minkowski decompositions of all reflexive lattice polytopes 
in dimension $2$. Their results 
are listed up to isomorphism in Table  \ref{minkdec1} and Table 
\ref{minkdec2}. } 
\end{rem}

\begin{table}

\begin{tabular}[b]{|l|c|c|c|}\hline&&\\

\rotatebox{0}{
  {\includegraphics[{height=1.5cm}]{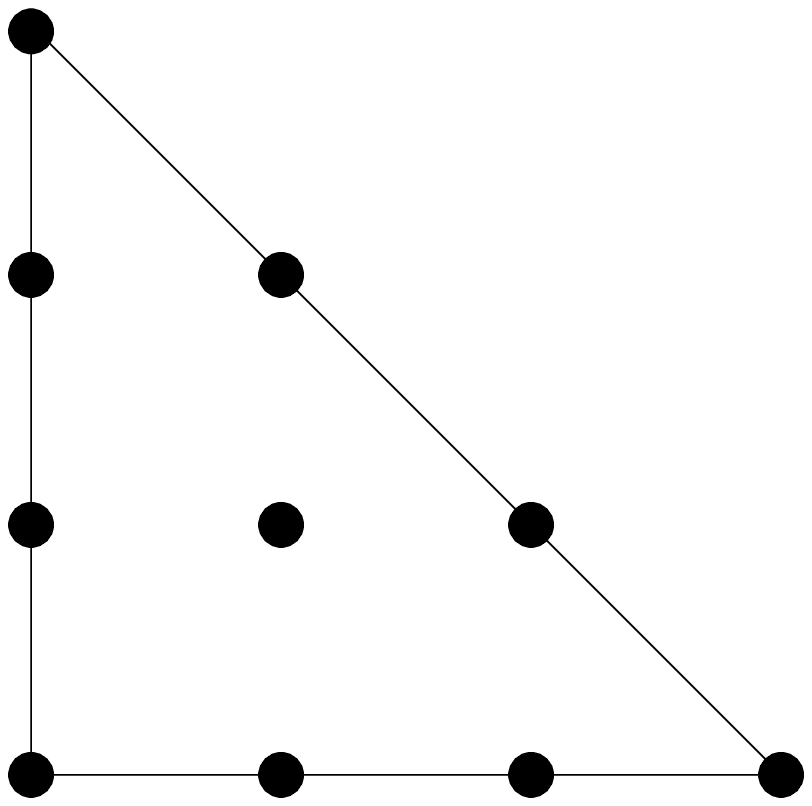}}} &
\raisebox{.5cm}{\rotatebox{0}{
    {\includegraphics[{width=0.5cm}]{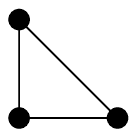}}}}
\raisebox{.8cm}{\; $+$\, }  
\raisebox{.3cm}{ \rotatebox{0}{
    {\includegraphics[{width=1cm}]{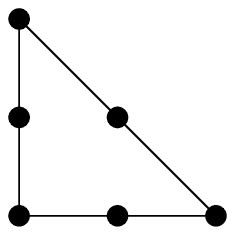}}} } & \hspace*{.5cm}
\raisebox{.5cm}{\rotatebox{0}{
    {\includegraphics[{height=0.5cm}]{side7.eps}}}}
\raisebox{.8cm}{\; $+$ }  
\raisebox{.5cm}{ \rotatebox{0}{
    {\includegraphics[{width=.5cm}]{side7.eps}}} } 
\raisebox{.8cm}{\; $+$ }  
\raisebox{.5cm}{ \rotatebox{0}{
    {\includegraphics[{height=.5cm}]{side7.eps}}} }\\ \hline && \\

\rotatebox{0}{
  {\includegraphics[{height=1cm}]{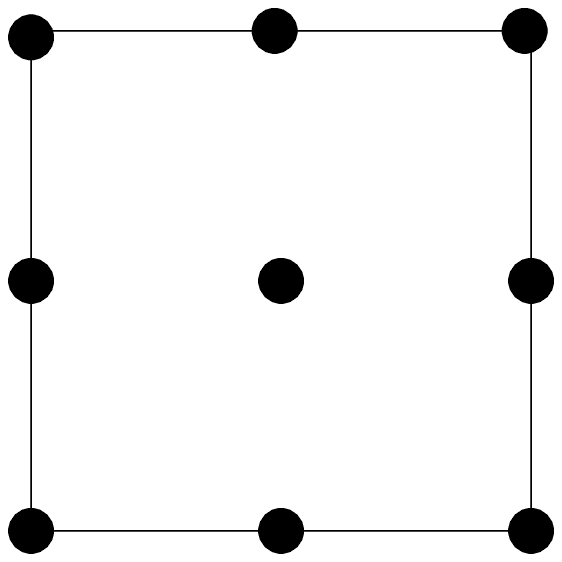}}} &
\raisebox{.3cm}{\rotatebox{0}{
    {\includegraphics[{height=1cm}]{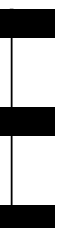}}}}\hspace*{.4cm}
\raisebox{.7cm}{\; $+$\, }  
\raisebox{.7cm}{ \rotatebox{0}{
    {\includegraphics[{width=1cm}]{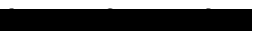}}} } & 
\raisebox{.5cm}{ \rotatebox{0}{
    {\includegraphics[{width=1cm}]{side5.eps}}}}
\raisebox{.5cm}{\; $+$\, }  
\raisebox{.3cm}{ \rotatebox{0}{
    {\includegraphics[{height=.5cm}]{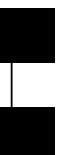}}}\qquad } \hspace*{-.5cm}
\raisebox{.5cm}{\; $+$\, }
\raisebox{.3cm}{ \rotatebox{0}{
    {\includegraphics[{height=.5cm}]{side2.eps}}}\qquad }
\\ \hline && \\
& \raisebox{.3cm}{\rotatebox{0}{
    {\includegraphics[{height=.5cm}]{side2.eps}}}}\hspace*{.4cm}
\raisebox{.5cm}{\; $+$\, }  
\raisebox{.3cm}{ \rotatebox{0}{
    {\includegraphics[{width=1cm}]{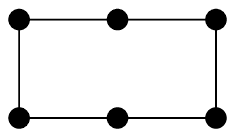}}} } & 
\raisebox{.5cm}{ \rotatebox{0}{
    {\includegraphics[{width=.5cm}]{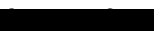}}}}
\raisebox{.5cm}{\; $+$\, }  
\raisebox{.3cm}{ \rotatebox{0}{
    {\includegraphics[{height=.5cm}]{side2.eps}}}\qquad } \hspace*{-.5cm}
\raisebox{.5cm}{\; $+$\, }
\raisebox{.3cm}{ \rotatebox{0}{
    {\includegraphics[{height=.5cm}]{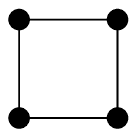}}}\qquad }\\ \hline && \\ 
& \raisebox{.3cm}{ \rotatebox{0}{
    {\includegraphics[{height=.5cm}]{side9.eps}}}}
\raisebox{.5cm}{\; $+$\, }  
\raisebox{.3cm}{ \rotatebox{0}{
    {\includegraphics[{width=.5cm}]{side9.eps}}}\qquad } & 
 \raisebox{.5cm}{ \rotatebox{0}{
    {\includegraphics[{width=.5cm}]{side1.eps}}}}
\raisebox{.5cm}{\; $+$\, }  
\raisebox{.5cm}{ \rotatebox{0}{
    {\includegraphics[{width=.5cm}]{side1.eps}}}\qquad } \hspace*{-.5cm}
\raisebox{.5cm}{\; $+$\, }
\raisebox{.3cm}{ \rotatebox{0}{
    {\includegraphics[{height=.5cm}]{side2.eps}}}\qquad } \hspace*{-.7cm}
\raisebox{.5cm}{\; $+$\, }
\raisebox{.3cm}{ \rotatebox{0}{
    {\includegraphics[{height=.5cm}]{side2.eps}}}\qquad }\\ \hline && \\

\begin{minipage}[t]{1cm}\vspace*{-.6cm}
{\rotatebox{270}{
  {\includegraphics[{height=1cm}]{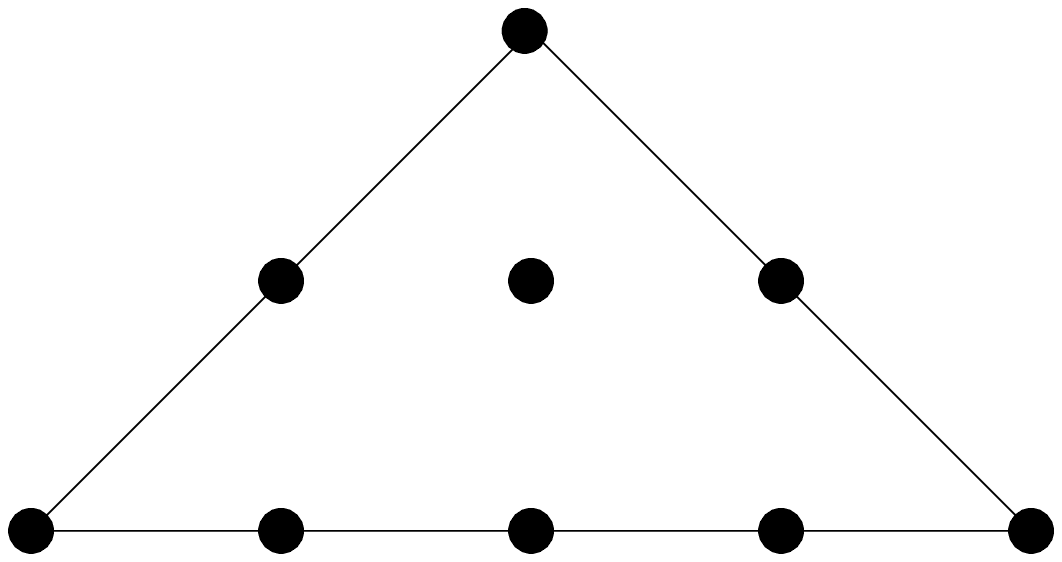}}}\vspace*{.2cm}\hspace*{-2cm}}
  \end{minipage} & \hspace*{-.7cm}
\raisebox{-.9cm}{ \rotatebox{0}{
    {\includegraphics[{width=0.5cm}]{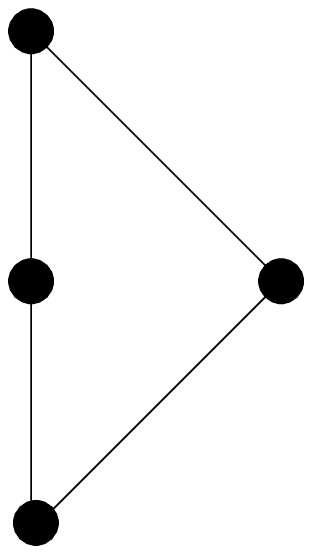}}}}
\raisebox{-0.5cm}{\; $+$\, }  
\raisebox{.1cm}{ \rotatebox{270}{
    {\includegraphics[{height=0.5cm}]{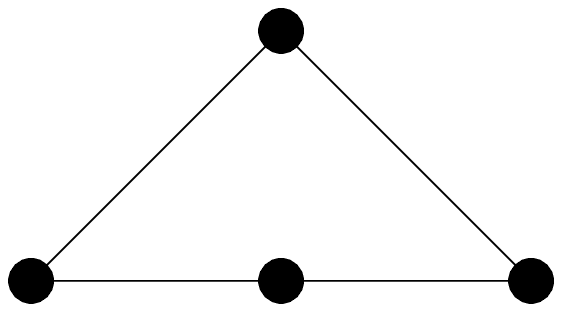}}} } 
 & \\ \hline  && \\ 

\rotatebox{0}{
  {\includegraphics[{height=1cm}]{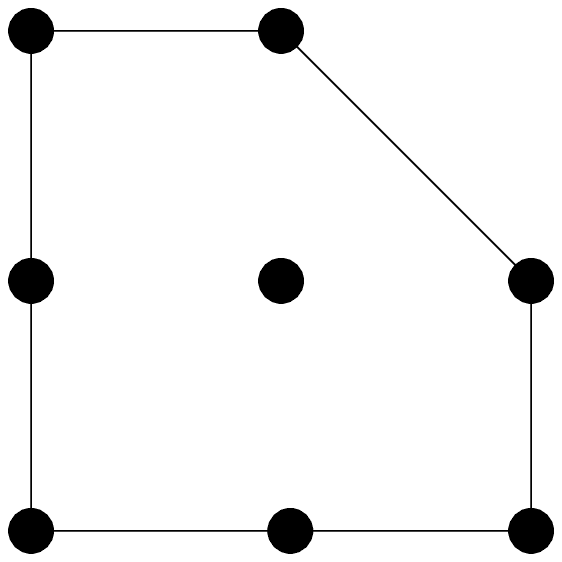}}} & \hspace*{.4cm}
\raisebox{.3cm}{\rotatebox{0}{
    {\includegraphics[{height=0.5cm}]{side2.eps}}}}
\raisebox{.5cm}{\; $+$\, }  
\raisebox{.3cm}{ \rotatebox{0}{
    {\includegraphics[{width=1cm}]{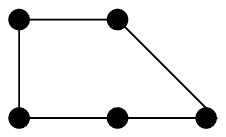}}} }  & \\ \hline && \\
 & \hspace*{-.7cm}
\raisebox{.3cm}{\rotatebox{0}{
    {\includegraphics[{width=0.5cm}]{side7.eps}}}}
\raisebox{.5cm}{\; $+$\, }  
\raisebox{.17cm}{ \rotatebox{90}{
    {\includegraphics[{height=0.5cm}]{side9.eps}}} } & 
\raisebox{.5cm}{\rotatebox{0}{
    {\includegraphics[{width=0.5cm}]{side1.eps}}}}
\raisebox{.5cm}{\; $+$\, }  
\raisebox{.3cm}{ \rotatebox{0}{
    {\includegraphics[{height=0.5cm}]{side2.eps}}} } 
\raisebox{.5cm}{\; $+$\, }  
\raisebox{.17cm}{ \rotatebox{90}{
    {\includegraphics[{width=0.5cm}]{side7.eps}}} }\\ \hline
\end{tabular}
\caption{Minkowski sum decompositions of reflexive polygons}
\label{minkdec1}
\end{table}

{\centering
\begin{table}

\begin{tabular}[t]{|l|c|c|c|}\hline&&\\
\rotatebox{90}{
  {\includegraphics[{height=1cm}]{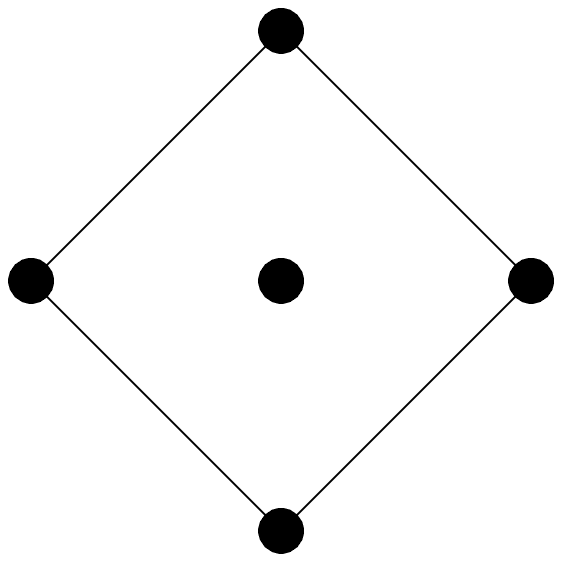}}} &
\raisebox{.2cm}{\rotatebox{90}{
    {\includegraphics[{width=0.5cm}]{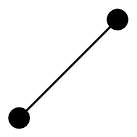}}}}
\raisebox{.5cm}{\; $+$\, }  
\raisebox{.2cm}{ \rotatebox{90}{
    {\includegraphics[{width=0.5cm}]{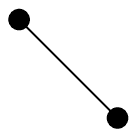}}} } & \\ \hline && \\

\rotatebox{0}{
  {\includegraphics[{height=1cm}]{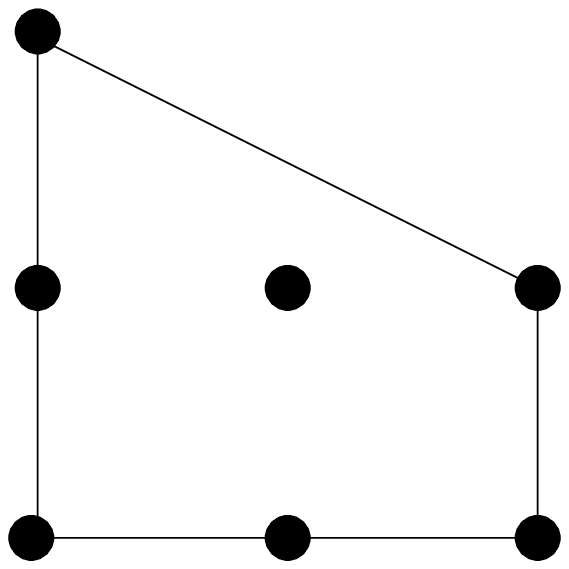}}} & \hspace*{.75cm}
\raisebox{.2cm}{\rotatebox{90}{
    {\includegraphics[{width=0.5cm}]{side1.eps}}}}
\raisebox{.5cm}{\; $+$\, }  
\raisebox{.3cm}{ \rotatebox{0}{
    {\includegraphics[{height=0.5cm}]{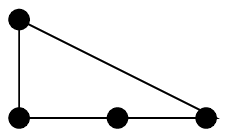}}} } & \\ \hline && \\

\rotatebox{0}{
  {\includegraphics[{height=1cm}]{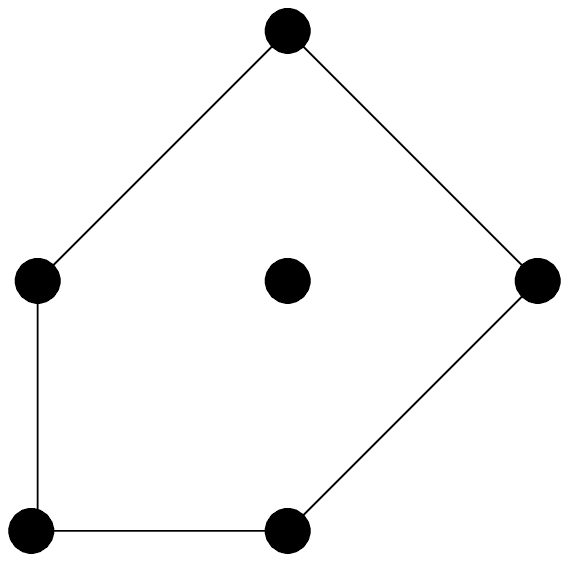}}} & \hspace*{.0cm}
\raisebox{.2cm}{\rotatebox{90}{
    {\includegraphics[{width=0.5cm}]{side3.eps}}}}
\raisebox{.5cm}{\; $+$\, }  
\raisebox{.3cm}{ \rotatebox{0}{
    {\includegraphics[{height=0.5cm}]{side7.eps}}} } & \\ \hline && \\

\rotatebox{0}{
  {\includegraphics[{height=1cm}]{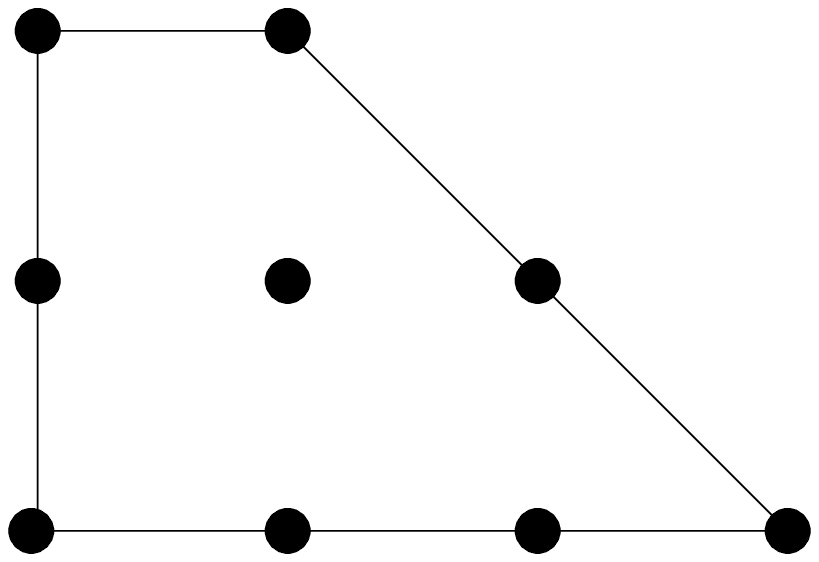}}} & \hspace*{.5cm}
\raisebox{.4cm}{\rotatebox{90}{
    {\includegraphics[{height=0.5cm}]{side2.eps}}}}
\raisebox{.5cm}{\; $+$\, }  
\raisebox{.0cm}{ \rotatebox{0}{
    {\includegraphics[{height=1cm}]{side13.eps}}} } & \\ \hline && \\
 & \hspace*{.2cm}
\raisebox{.3cm}{\rotatebox{0}{
    {\includegraphics[{width=0.5cm}]{side7.eps}}}}
\raisebox{.5cm}{\; $+$\, }  
\raisebox{.3cm}{ \rotatebox{0}{
    {\includegraphics[{height=0.5cm}]{side12.eps}}} } & \hspace*{.5cm}
\raisebox{.4cm}{\rotatebox{90}{
    {\includegraphics[{height=0.5cm}]{side2.eps}}}}
\raisebox{.5cm}{\; $+$ }  
\raisebox{.3cm}{ \rotatebox{0}{
    {\includegraphics[{width=.5cm}]{side7.eps}}} } 
\raisebox{.5cm}{\; $+$ }  
\raisebox{.3cm}{ \rotatebox{0}{
    {\includegraphics[{width=.5cm}]{side7.eps}}} }\\ \hline && \\

\rotatebox{0}{
  {\includegraphics[{height=1cm}]{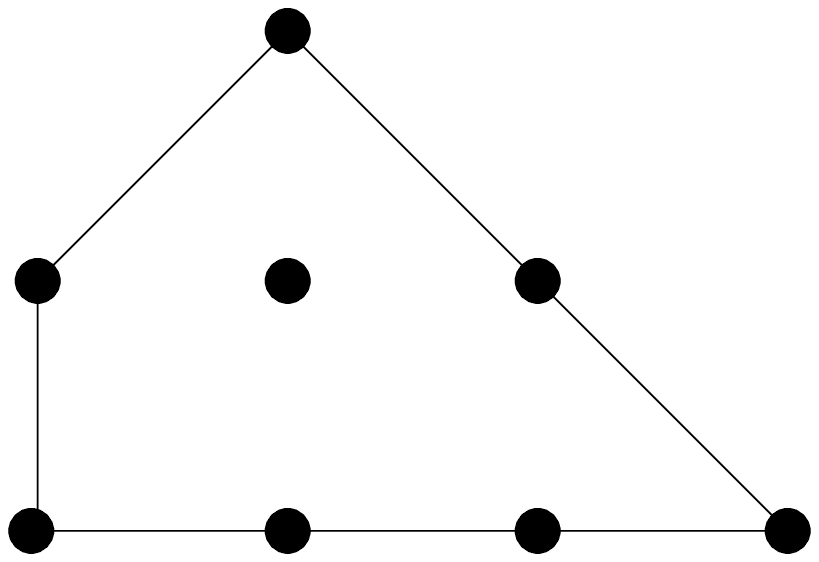}}} & \hspace*{.3cm}
\raisebox{.3cm}{\rotatebox{0}{
    {\includegraphics[{width=0.5cm}]{side7.eps}}}}
\raisebox{.5cm}{\; $+$\, }  
\raisebox{.3cm}{ \rotatebox{0}{
    {\includegraphics[{height=0.5cm}]{side16.eps}}} } &  \\ \hline && \\

\rotatebox{0}{
  {\includegraphics[{height=1cm}]{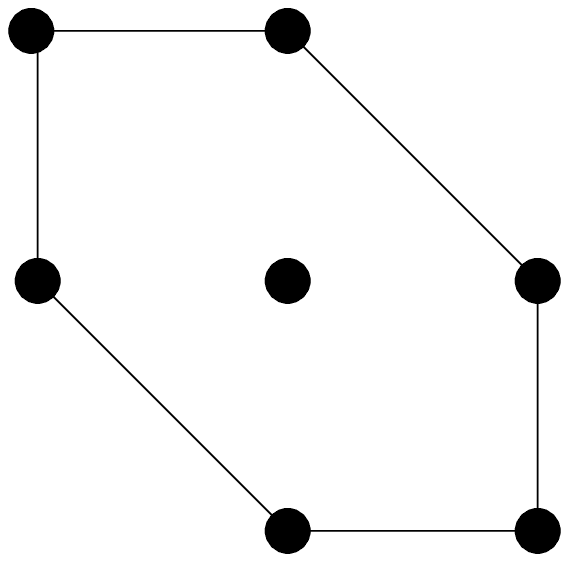}}} & \hspace*{-0.1cm}
\raisebox{.8cm}{\rotatebox{180}{
    {\includegraphics[{height=0.5cm}]{side7.eps}}}}
\raisebox{0.5cm}{\; $+$\, }  
\raisebox{.3cm}{ \rotatebox{0}{
    {\includegraphics[{height=0.5cm}]{side7.eps}}} } & \\ \hline && \\
 & \hspace*{-.1cm}
\raisebox{.3cm}{\rotatebox{0}{
    {\includegraphics[{width=0.5cm}]{side3.eps}}}}
\raisebox{.5cm}{\; $+$\, }  
\raisebox{.3cm}{ \rotatebox{0}{
    {\includegraphics[{height=0.5cm}]{side9.eps}}} } & \hspace*{.5cm}
\raisebox{.3cm}{\rotatebox{0}{
    {\includegraphics[{height=0.5cm}]{side3.eps}}}}
\raisebox{.5cm}{\; $+$ }  
\raisebox{.5cm}{ \rotatebox{0}{
    {\includegraphics[{width=.5cm}]{side1.eps}}} } 
\raisebox{.5cm}{\; $+$ }  
\raisebox{.3cm}{ \rotatebox{0}{
    {\includegraphics[{height=.5cm}]{side2.eps}}} }\\ \hline && \\

\rotatebox{0}{
  {\includegraphics[{height=1cm}]{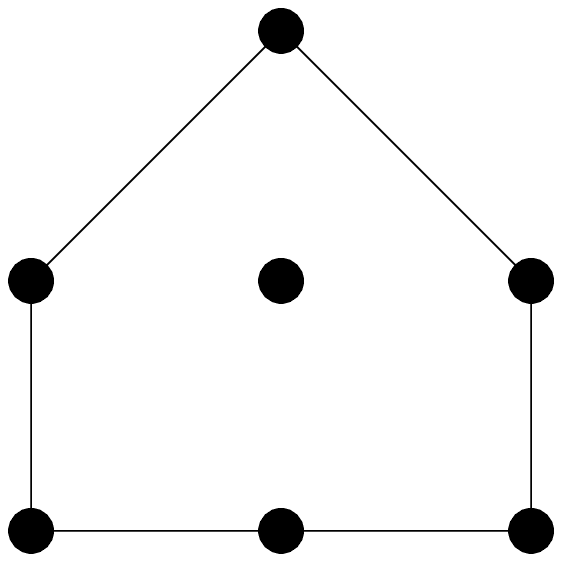}}} & \hspace*{.7cm}
\raisebox{.8cm}{\rotatebox{180}{
    {\includegraphics[{height=0.5cm}]{side2.eps}}}}
\raisebox{0.5cm}{\; $+$\, }  
\raisebox{.3cm}{ \rotatebox{0}{
    {\includegraphics[{height=0.5cm}]{side16.eps}}} } & \\ \hline && \\
 & \hspace*{-.1cm}
\raisebox{.3cm}{\rotatebox{0}{
    {\includegraphics[{width=0.5cm}]{side7.eps}}}}
\raisebox{.5cm}{\; $+$\, }  
\raisebox{.17cm}{ \rotatebox{90}{
    {\includegraphics[{height=0.5cm}]{side7.eps}}} } & \rule{6.5cm}{0pt}\\ 
\hline
\end{tabular} 
\caption{Minkowski sum decompositions of reflexive polygons (continued)}
\label{minkdec2}
\end{table}}

\begin{rem} \label{dual-faces}
{\rm Let $P \subset M_{\mathbb{R}}$ be a $n$-dimensional  
Gorenstein polytope of index $r$. By  \ref{refl-cone-dual}, 
the dual polytope $P^*$ of a
Gorenstein polytope $P$ of index $r$ is also a Gorenstein 
polytope of index $r$. Moreover, $P$ and $P^*$ are combinatorially 
dual to each other, i.e., there exists 
a natural bijection $F \leftrightarrow F^*$ 
between $k$-dimensional faces $F$ 
of $P$ and $(n-k-1)$-dimensional faces $F^*$ of $P^*$.  
} 
\end{rem}

\begin{rem} \label{sublattice}
{\rm If  $r>1$ then the reflexive polytopes $rP$
and $rP^*$ are not  dual to each other  
as reflexive polytopes (see \ref{dual-pol}). 

Denote by $M' \subset  M_{\mathbb{R}}$ the $n$-dimensional lattice 
generated by $M$ and $m/r \in  M_{\mathbb{R}}$, where $m \in rP$ is the 
unique interior point of the reflexive polytope $rP$.  
In order to get the dual Gorenstein polytope $P^*$ 
one has to dualize the reflexive polytope $rP$, i.e., 
to get the dual polytope  $(rP)^*$ and replace the dual lattice 
$N$ by the  sublattice $N' \subset N$, $|N/N'| =r$,  
 which is dual to $M'$ (see \cite[Prop. 1.15]{BN08}).
 } 
\end{rem}

\begin{rem} \label{duality-Gor}
{\rm
By   \cite[Prop. 1.16]{BN08}, 
one has 
\[ \partial (rP)^* \cap N =  \partial (rP)^* \cap N', \]
i.e., there exists a natural bijection between the set of $N'$-lattice 
points in $P^*$ and the set of boundary $N$-lattice points in the dual 
reflexive polytope $(rP)^*$. Moreover, every facet $\Gamma$ of
  $P^*$ considered as a lattice polytope 
with respect to  $N'$ is isomorphic  
to the facet $\Gamma$ of $\Delta^*$  considered  as a lattice 
polytope with respect to $N$.  
}
\end{rem}

\begin{prop}
  Let $P \subset M_{\mathbb{R}}$ be an $n$-dimensional Gorenstein
  polytope of degree $d$. Denote by $\Gamma_1, \ldots,\Gamma_k$ all
  codimension-$1$ faces of $P$. Then one has
\[ \sum_{i =1}^k \nv(\Gamma_i) = (n+1 -d) \nv(P). \]
\label{face-vol}
\end{prop}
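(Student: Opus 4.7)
The identity I want to prove is really just the classical pyramid decomposition for reflexive polytopes, pulled back through the rescaling $P \mapsto rP$ where $r = n+1-d$ is the index.

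First I would pass from the Gorenstein polytope $P$ to the associated reflexive polytope $Q := rP$. After translating by a suitable lattice vector, I may assume that $0 \in \inter(Q)$ is the unique interior lattice point making $Q$ reflexive (cf. Definition of a Gorenstein polytope of index $r$). The key property of a reflexive polytope I will use is that every facet $F$ of $Q$ lies in an affine hyperplane $\{x \in M_{\R} : \langle x, n_F \rangle = 1\}$ for some primitive $n_F \in N$, i.e., each facet has lattice distance exactly $1$ from the origin.

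Next, I would decompose $Q$ as the union, with pairwise interior-disjoint interiors, of the pyramids $\pyr(F) := \conv(\{0\} \cup F)$ over its facets $F$. Since each facet sits at lattice height $1$ above the apex $0$, standard pyramid volume gives
\[ \nv(\pyr(F)) = n! \cdot \tfrac{1}{n}\, \vol_{n-1}(F) \cdot 1 = (n-1)!\, \vol_{n-1}(F) = \nv(F), \]
so summing over the facets of $Q$ yields $\nv(Q) = \sum_{F} \nv(F)$.

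Finally, I translate this identity for $Q = rP$ back into one for $P$. The facets of $rP$ are exactly $r \Gamma_i$ where $\Gamma_1, \ldots, \Gamma_k$ are the facets of $P$; since $\Gamma_i$ is $(n-1)$-dimensional and $P$ is $n$-dimensional, the scaling behavior of $\nv$ gives $\nv(rP) = r^n \nv(P)$ and $\nv(r\Gamma_i) = r^{n-1} \nv(\Gamma_i)$. Plugging these into the pyramid identity and dividing by $r^{n-1}$ produces
\[ r\,\nv(P) = \sum_{i=1}^k \nv(\Gamma_i), \]
which is the claim since $r = n+1-d$. I do not foresee a substantive obstacle; the only point to check carefully is that $0$ can be chosen as the distinguished interior lattice point of $rP$ (which is the content of $P$ being Gorenstein of index $r$) and that the pyramid decomposition is non-overlapping, which follows because $0 \in \inter(Q)$.
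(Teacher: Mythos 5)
Your proposal is correct and follows essentially the same route as the paper's proof: pass to the reflexive polytope $rP$ with $r = n+1-d$, translate so that $0$ is its unique interior lattice point, decompose $rP$ into pyramids over its facets (each at lattice height $1$ by reflexivity), and then convert back to $P$ via the scalings $\nv(rP) = r^n \nv(P)$ and $\nv(r\Gamma_i) = r^{n-1}\nv(\Gamma_i)$. No substantive difference from the paper's argument.
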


\begin{proof}
The polytope $\Delta = (n+1-d)P$ is reflexive. Up to a shift by a 
lattice vector, we can assume that $0$ is  the unique
interior lattice point of $\Delta$. Then one can decompose
$\Delta$ into a union of $n$-dimensional pyramids with vertex $0$
over $(n-1)$-dimensional
faces $(n+1-d)\Gamma_1, \ldots, 
(n+1-d)\Gamma_k$ of $\Delta$. Since heights of all
these pyramids are equal to one, we obtain
\[   \sum_{i =1}^k \nv((n+1-d)\Gamma_i) = \nv (\Delta). \]
It remains to combine it with 
the equalities $\nv (\Delta) = (n+1-d)^n \nv(P)$ and
\[ \nv ((n+1-d)\Gamma_i) = (n+1-d)^{n-1} \nv(\Gamma_i),
 \;1 \leq i \leq k. \]
\end{proof}

\begin{dfn}
Let  $v_i \in P$ be  a vertex  of a  $n$-dimensional
Gorenstein polytope $P$ of degree $d$. We denote by $C(v_i)$ 
the $n$-dimensional
cone generated by vectors $v -v_i$ where $v$ runs over all lattice
points of $P$.
\end{dfn}

\begin{prop}
  Let $v_i \in P$ be an arbitrary vertex of a $n$-dimensional
  Gorenstein polytope $P$ of degree $d$ and let $\Gamma_i \subset P^*$
  be the dual facet in the dual polytope $P^*$.
  Then then cone $C(v_i)$ is dual to the cone $C_{\Gamma_i}$ over the
  lattice polytope $\Gamma_i$ $($see \ref{dual-v-g}$)$.
\label{Ci}
\end{prop}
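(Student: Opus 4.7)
The plan is to lift the statement into the reflexive Gorenstein cone picture. Let $C := C_P \subset \overline{M}_\R$ denote the $(n{+}1)$-dimensional reflexive Gorenstein cone associated to $P$, with dual $C^\vee = C_{P^*} \subset \overline{N}_\R$ (see Remark~\ref{refl-cone-dual}). The vertex $v_i$ of $P$ corresponds to the ray $R_i := \R_{\geq 0}(v_i, 1)$ of $C$, and under the standard face-duality of cones this ray is dual to the facet
\[ F_i := \{y \in C^\vee \,:\, \langle(v_i, 1), y\rangle = 0\} \]
of $C^\vee$. The whole argument consists in describing $F_i$ in two complementary ways --- once from the $P^*$-side and once from the $P$-side --- and matching the descriptions.

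From the $P^*$-side, the face bijection of Remark~\ref{dual-faces} sends the vertex $v_i$ to the facet $\Gamma_i$, and it is nothing but the restriction of the cone face-duality between $C$ and $C^\vee$ to the respective support hyperplanes. Intersecting $F_i$ with the support hyperplane $\{\langle m_{C^\vee}, \cdot\rangle = 1\}$ of $C^\vee$ therefore recovers $\Gamma_i$, so $F_i$ is exactly the cone $C_{\Gamma_i}$ of~\ref{dual-v-g}.

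From the $P$-side, I would write $y = (y_0, a) \in \overline{N}_\R = N_\R \oplus \R$ and unpack the two conditions defining $F_i$: the orthogonality $\langle(v_i,1), y\rangle = 0$ forces $a = -\langle v_i, y_0\rangle$, while the inclusion $y \in C^\vee$, using $C = \{(\lambda v, \lambda) : \lambda \geq 0,\, v \in P\}$, becomes the inequality $\langle v - v_i, y_0\rangle \geq 0$ for every $v \in P$, i.e.\ $y_0 \in C(v_i)^\vee$. Hence the projection $(y_0, a) \mapsto y_0$ restricts to a linear isomorphism $F_i \cong C(v_i)^\vee$, and combined with the first description this yields the desired duality $C_{\Gamma_i} = C(v_i)^\vee$.

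I expect the only mildly delicate point to be the first description: one has to verify that the ``cone $C_{\Gamma_i}$ over $\Gamma_i$'' in the sense of~\ref{dual-v-g} really coincides with the abstract facet $F_i$ of $C^\vee$, taking care of the sublattice subtlety of Remark~\ref{sublattice} when the index $r$ exceeds one. Once this identification is in place, the $P$-side description is a direct unwinding of the definitions of the tangent cone and of the dual cone.
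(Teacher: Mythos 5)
Your argument is correct, but it runs along a different track than the paper's own proof. The paper proves \ref{Ci} by passing to the $n$-dimensional reflexive polytope $\Delta=(n+1-d)P$: it observes that $C(v_i)$ is the vertex cone of $\Delta$ at $(n+1-d)v_i$, identifies the facet $\Gamma_i$ of $P^*$ with a facet of the polar dual $\Delta^*$ via the sublattice remarks \ref{sublattice} and \ref{duality-Gor}, and then invokes the standard vertex--facet duality for polar dual reflexive polytopes, so that $C_{\Gamma_i}$ is realized inside $N_{\R}$ exactly as in Figure \ref{dual-v-g}. You instead stay in the $(n+1)$-dimensional Gorenstein cone picture: you identify $C_{\Gamma_i}$ with the facet $F_i=C_P^{\vee}\cap(v_i,1)^{\perp}$ of $C_P^{\vee}=C_{P^*}$ and then check by a direct computation that the projection $\overline{N}_{\R}\to N_{\R}$ maps $F_i$ isomorphically onto $C(v_i)^{\vee}$. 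Both halves of your argument are sound: the facet identification is legitimate because the face bijection of \ref{dual-faces} is by definition induced by the face duality of $C_P$ and $C_P^{\vee}$, and because a Gorenstein cone is the cone over its degree-one slice, so $F_i$ is the cone over the dual face of $v_i$, i.e.\ over $\Gamma_i$ in the sense of \ref{DefGorPol}; and your projection computation is exactly the unwinding of $C_P^{\vee}$ you describe. Note moreover that the ``sublattice subtlety'' you flag essentially evaporates in your setup: since $v_i\in M$, the map $(y_0,a)\mapsto y_0$ carries $\overline{N}\cap(v_i,1)^{\perp}$ bijectively onto $N$, so your identification respects lattices without ever invoking $N'\subset N$; the index-$r$ bookkeeping is only needed if, like the paper, one insists on drawing $\Gamma_i$ inside $\Delta^*$. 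What the paper's route buys is the concrete picture in $N_{\R}$ used later (e.g.\ to read off generators of $C(v_i)$ from $\Gamma_i^*$ being a Lawrence prism or exceptional simplex); what your route buys is a shorter, coordinate-level verification that bypasses the polar-duality detour.
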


\begin{exam} \label{dual-v-g}
  \begin{itemize}
  \item[]
  \item[] 
  \item[]\begin{figure}[h] \vspace*{-1cm}
    \centering    
    {\epsfig{file=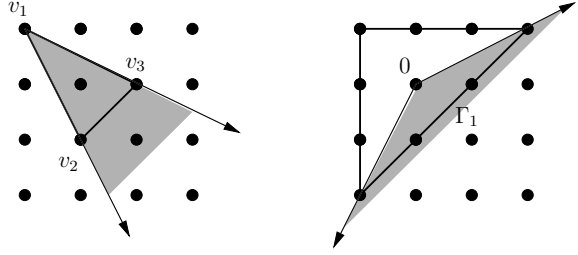, height=3.5cm}}
    \caption{Duality between $C(v_1)$ and $C_{\Gamma_1}$}
  \end{figure} 
  \end{itemize}  \vspace*{-.8cm}
\end{exam}

\begin{proof}
  Let $\Delta = (n+1-d)P \subset M_{\R}$ be the reflexive polytope
  corresponding to $P$. By \ref{sublattice}, $P^*$ is obtained 
from the dual reflexive
polytope $\Delta^* \subset N_{\R}$ by choosing a sublattice $N' \subset N$
of index $r = n+1-d$. By \ref{duality-Gor},  every facet $\Gamma$ of
  $P^*$ with respect to the lattice $N'$ is isomorphic 
  to the facet $\Gamma$ of $\Delta^*$ (with respect to the lattice $N$). On
  the other hand, the cone $C(v_i)$ is 
equal to the cone generated by vectors $v
  -(n+1-d)v_i$ where $v$ runs over all lattice points of $\Delta = (n+1-d)P$.
  The duality between reflexive polytopes 
$\Delta$ and $\Delta^*$ shows that $C(v_i)$ 
in dual to cone
  $C_{\Gamma_i}$ over $\Gamma_i$ is a facet of $\Delta$. 
\end{proof}

\section{The graded Gorenstein algebra $S_P$}
\label{SecGorenstein}

Let $K$ be an arbitrary field of characteristic $0$ and let $P \subset 
M_{\R}$ be an arbitrary $n$-dimensional lattice polytope. We consider 
the $(n+1)$-dimensional lattice $\overline{M} = M \oplus \Z$ and 
the $(n+1)$-dimensional cone $C_P \subset \overline{M}_{\R}$ over $P$. 
There exists a natural grading of lattice points $\overline{m} = (m, k) \in 
C_P \cap \overline{M}$ $(m \in M, k \in \Z_{\geq 0})$: 
\[ \deg \overline{m} := k, \]
so that $C_P \cap \overline{M}$ becomes a graded monoid.

\begin{dfn} 
{\rm 
We denote by $S_P := K[C_P \cap \overline{M}]$ 
the graded $K$-algebra of the semigroup 
$C_P \cap \overline{M}$. For each lattice point  $\overline{m} \in 
C_P \cap \overline{M}$ we denote by ${\bf x}^{\overline{m}}$ the corresponding 
momomial in  $S_P$. It is well-known that the $K$-algebra 
$S_P$ is always Cohen-Macaulay (see e.g. \cite{Sta78}). } 
\end{dfn}

\begin{dfn} 
{\rm 
A lattice point $m \in C_P \cap \overline{M}$ 
is called {\em irreducible} if it is not 
representable as a sum of two lattice points $\overline{m} = \overline{m}' +
\overline{m} ''$ $(\overline{m}', \overline{m}'' \in 
C_P \cap \overline{M})$.} 
\end{dfn}

Obviously every lattice point of degree $1$ in $C_P \cap \overline{M}$ 
is irreducible. 
Denote by $I_P(k)$ the number of irreducible lattice points of degree 
$k$ in $C_P \cap \overline{M}$. Since the semigroup $C_P \cap \overline{M}$ 
is finitely generated we obtain that  
\[ I(P,t):= \sum_{k \geq 0} I_P(k)t^k  \]
is a polynomial, i.e. $I_P(k) = 0$ for $k \gg 0$. The number  $I_P(k)$ 
 can be identified with the number of lattice points of degree $k$ in a 
minimal generating set of the semigroup $C_P \cap \overline{M}$.

\begin{dfn} 
{\rm 
Let $S = \bigoplus_{i \geq 0} S_i$ be a finitely generated 
graded $K$-algebra, 
$S_0 \cong K$. 
For any $k, l\in \N$ we denote by 
$S_kS_l$ the $K$-subspace in $S_{k+l}$ generated by products 
$xy$, where $x \in S_k$, $y \in S_l$.  
We define the numbers $g_k(S)$ as follows: 
$g_0(S) = 1$,  
\[ g_k(S):= \dim_K S_k/(\sum_{j=1}^{k-1} S_j S_{k-j}), \; \forall k >0. \]
It is clear that $g_1(S) =  \dim_K S_1$ and the sum 
\[ G_S(t):= \sum_{k \geq 0} g_k(S)t^k  \]
is a polynomial, i.e. $ g_k(S) = 0$ for $k \gg 0$.  
The number  $g_k(S)$ $(k>0)$ can be identified with the number 
of elements of degree $k$ in the minimal generating system of the 
$K$-algebra $S$. Here the minimality means that any proper subset of this 
system does not generate $S$.} 
\end{dfn} 

\begin{prop} 
Let   $S= S_P$ be  the graded algebra of a lattice polytope $P$. Then the 
polynomial $G_S(t)$ is equal to  $I(P,t)$.
\end{prop}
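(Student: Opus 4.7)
The plan is to exploit the fact that $S_P$ has an explicit monomial $K$-basis indexed by $C_P \cap \overline{M}$, and that both the grading and the ``multiplicative reducibility'' of monomials are perfectly controlled by the additive structure on lattice points. First I would observe that $(S_P)_k$ admits a $K$-basis consisting of the monomials $\mathbf{x}^{\overline{m}}$ with $\overline{m} \in C_P \cap \overline{M}$ and $\deg \overline{m} = k$; in particular $\dim_K (S_P)_k = |kP \cap M|$, and the multiplication in $S_P$ is given by $\mathbf{x}^{\overline{m}'} \cdot \mathbf{x}^{\overline{m}''} = \mathbf{x}^{\overline{m}' + \overline{m}''}$.

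Next I would identify the subspace $\sum_{j=1}^{k-1} (S_P)_j (S_P)_{k-j}$ inside $(S_P)_k$. By bilinearity and the description of products of monomials, this subspace is spanned by those monomials $\mathbf{x}^{\overline{m}}$ for which $\overline{m} = \overline{m}' + \overline{m}''$ admits a decomposition with $\overline{m}', \overline{m}'' \in C_P \cap \overline{M}$ and $\deg \overline{m}', \deg \overline{m}'' \geq 1$. Equivalently, these are exactly the monomials corresponding to the \emph{reducible} lattice points of degree $k$ (reading the definition of ``irreducible'' as ruling out decompositions into two lattice points of positive degree, which is the only sensible reading in the graded setting, since otherwise $\overline{m} = \overline{m} + 0$ would make every point reducible).

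Passing to the quotient, a standard argument then shows that the residue classes of the monomials $\mathbf{x}^{\overline{m}}$ with $\overline{m}$ irreducible of degree $k$ form a $K$-basis of $(S_P)_k / \sum_{j=1}^{k-1} (S_P)_j (S_P)_{k-j}$: they span by the decomposition above, and they are linearly independent because the monomials of degree $k$ form a $K$-basis of $(S_P)_k$ and the spanning set of the subspace consists of a disjoint set of basis monomials. Therefore $g_k(S_P) = I_P(k)$ for every $k \geq 1$, and the degree-$0$ case holds by the convention $g_0(S_P) = 1 = I_P(0)$ (accounting for the unique lattice point $0$, which generates $S_0 \cong K$).

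I do not anticipate a serious obstacle; the only subtle point is the convention on irreducibility in degree $0$, which must be fixed coherently with the convention $g_0 = 1$. Everything else is an immediate translation between the additive monoid structure of $C_P \cap \overline{M}$ and the multiplicative structure of its semigroup algebra $S_P$.
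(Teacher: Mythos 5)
Your proposal is correct and follows essentially the same route as the paper: the key step in both is that a monomial $\mathbf{x}^{\overline{m}}$ of degree $k$ lies in $\sum_{j=1}^{k-1}(S_P)_j(S_P)_{k-j}$ precisely when $\overline{m}$ decomposes as a sum of two positive-degree lattice points of $C_P \cap \overline{M}$. The only cosmetic difference is that the paper argues via a minimal generating system chosen among monomials (using the identification of $g_k$ with the number of degree-$k$ minimal generators), whereas you compute $\dim_K$ of the quotient directly from the monomial basis; your remark on the degree-$0$ and trivial-decomposition convention is a sensible clarification of a point the paper leaves implicit.
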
 

\noindent
{\em Proof.} Since $S_P= K[ C_P \cap \overline{M}]$ 
is generated by monomials ${\bf x}^{\overline{m}}$ corresponding to 
lattice points $\overline{m} \in  C_P \cap \overline{M}$, a minimal generating 
system $G= \{\gamma_1, \ldots, \gamma_l \}$ 
of the $K$-algebra $S_P$ can be chosen as a finite subset 
of these monomials. It remains to show that ${\bf x}^{\overline{m}} \in G$
if and only if $\overline{m} \in  C_P \cap \overline{M}$ is irreducible. 
Indeed, if  ${\bf x}^{\overline{m}} \in G$ and $ 
\overline{m} = \overline{m}' +
\overline{m} ''$ 
$\overline{m}',\overline{m} '' \in  C_P \cap \overline{M})$ and 
$k = \deg \overline{m}'$, $l = \deg \overline{m}''$, 
then ${\bf x}^{\overline{m}} = {\bf x}^{\overline{m}'} {\bf x}^{\overline{m}''} \in 
S_k S_l$. This contradicts 
the minimality of $G$. On the other hand, if $\overline{m} \in  C_P \cap 
\overline{M}$ is an irreducible lattice point of degree $k$, then   
${\bf x}^{\overline{m}}$ can not be a polynomial expression 
of elements of degree $<k$. 
Hence ${\bf x}^{\overline{m}}$ must appear in $G$. 
\hfill $\Box$

\begin{prop} \label{generatA}
Let $P$ be a $n$-dimensional lattice polytope. Denote by 
$A_P$ an  Artinian graded $K$-algebra obtained by quotient of  
$S_P$ modulo a regular sequence of $n+1$ elements $y_0, y_1, \ldots, y_n$ 
of degree $1$. Then 
\[ G_{A_P}(t) = I_P(t) - (n+1)t. \] 
\end{prop}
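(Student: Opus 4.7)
The plan is to compare the minimal generating systems of $S_P$ and of $A_P$ degree by degree, using the preceding proposition $G_{S_P}(t) = I_P(t)$. All of the action happens because the ideal $J := \langle y_0,\ldots,y_n \rangle$ we kill is generated in degree $1$, so it can affect the minimal generator count only in degree $1$ and not in higher degrees.

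First I would handle degree one. Since $S_P$ is Cohen-Macaulay of Krull dimension $n+1$ and $A_P$ is Artinian, the regular sequence $y_0,\ldots,y_n$ must be linearly independent in $(S_P)_1$: any nontrivial linear relation among them would make one of the $y_i$ a zero-divisor modulo the preceding ones, contradicting regularity. Therefore
\[
g_1(A_P) \;=\; \dim_K (A_P)_1 \;=\; \dim_K (S_P)_1 - (n+1) \;=\; I_P(1) - (n+1),
\]
using that every degree-$1$ element of $S_P$ is irreducible (a monomial ${\bf x}^{\overline{m}}$ with $\deg\overline{m}=1$ cannot split as a sum of two nonzero-degree lattice points), so $g_1(S_P) = \dim_K (S_P)_1 = I_P(1)$.

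Next, for $k \ge 2$, the point is that the degree-$k$ part of $J$ is already contained in the product $(S_P)_1 \cdot (S_P)_{k-1}$:
\[
J \cap (S_P)_k \;=\; \sum_{i=0}^{n} y_i\,(S_P)_{k-1} \;\subseteq\; (S_P)_1 \cdot (S_P)_{k-1} \;\subseteq\; \sum_{j=1}^{k-1}(S_P)_j\,(S_P)_{k-j}.
\]
Consequently, writing $(A_P)_k = (S_P)_k / (J\cap (S_P)_k)$ and identifying $\sum_j (A_P)_j(A_P)_{k-j}$ with the image of $\sum_j (S_P)_j(S_P)_{k-j}$,
\[
g_k(A_P) \;=\; \dim_K (S_P)_k \Big/ \Bigl(J\cap (S_P)_k + \sum_{j=1}^{k-1}(S_P)_j(S_P)_{k-j}\Bigr) \;=\; g_k(S_P) \;=\; I_P(k).
\]
Assembling these contributions (and using $g_0(A_P) = g_0(S_P) = 1$) yields $G_{A_P}(t) = I_P(t) - (n+1)t$, as claimed.

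The only step requiring genuine care is the linear independence of the $y_i$ in $(S_P)_1$; everything else is a direct bookkeeping comparison of minimal generator counts. I do not expect a serious obstacle, because the degree-one nature of the regular sequence is exactly what makes its effect on minimal generators localised to degree one.
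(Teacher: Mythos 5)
Your proof is correct and follows essentially the same route as the paper: the degree-one count is a dimension computation, and for $k\geq 2$ the key point is that the degree-$k$ part of the ideal $\langle y_0,\ldots,y_n\rangle$ lies in $(S_P)_1(S_P)_{k-1}$, so the minimal generator counts of $S_P$ and $A_P$ agree in all degrees $\geq 2$. The only difference is that you spell out the linear independence of the $y_i$ in $(S_P)_1$, which the paper treats as obvious.
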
 

\noindent
{\em Proof.} Obviously, we have  \[ g_1(A_P) = \dim_K (A_P)_1 = 
| P \cap M| - (n+1) = I_P(1) - (n+1). \]
Denote by  $\psi$ the epimorphism $S_P \to A_P$. If $G \subset S_P$ 
is minimal generating system of monomials, then $\psi(G)$ 
generate $A_P$ as $K$-algebra. 
This implies $g_k(A_P) \leq I_P(k)$ $\forall k\geq 2$. In fact, we 
have the equality 
 $g_k(A_P) = I_P(k)$ $\forall k\geq 2$, because    
the kernel of the epimorphism 
\[ \psi\, : \, (S_P)_k \to (A_P)_k \]
is contained in $(S_P)_1 (S_P)_{k-1}$. 
\hfill $\Box$  


Let  $P$ be an arbitary  
$n$-dimensional lattice polytope and let $I(P,1) = \sum_{k \geq 0} 
I_P(k)$ be  
the number of irreducible lattice points $C_P \cap \overline{M}$. 
Then we can describe the graded  algebra  
$S_P$ as a quotient of a polynomial ring $R:= K[x_1, \ldots, x_{I(P,1)}]$ 
such that exactly $I_P(k)$ of variables 
$x_i$ have weight $k$ and  $J:= {\rm Ker}( R \to S_P)$ 
is an ideal generated by binomials 
\[ B= x_{i_1}^{a_1} \cdots x_{i_s}^{a_s} - x_{j_1}^{b_1} \cdots x_{j_r}^{b_r}. \]  
We use the following two statements: 

\begin{prop}\cite[Prop. 3.5]{Bat06}
Let $\{ B_1, \ldots, B_p\}$ be a set of binomials which minimally  
generate the ideal $J$. Then $P$ is a pyramid over a $(n-1)$-dimensional 
lattice polytope $P'$, i.e., $P = \Pi(P')$, if and only if the exists 
a variable $x_i \in \{x_1, \ldots, x_{I(P,1)} \}$ which does not appear 
in any of binomials  $ B_1, \ldots, B_p$.  
\label{pyramid1}
\end{prop}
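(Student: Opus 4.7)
The plan is to interpret the condition ``$x_i$ does not appear in any $B_k$'' as a direct-sum decomposition of the graded semigroup $S:=C_P\cap \overline{M}$ of $P$, and then translate this decomposition back into a geometric pyramid structure on $P$.

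For the forward direction, I would assume $P=\Pi(P')$, so in suitable coordinates $M=M'\oplus\Z$ with apex $v=(0,1)$, and work in $\overline{M}=M'\oplus\Z\oplus\Z$ with coordinates $(m,a,k)$. Any lattice point of $C_P$ with $a\geq 1$ and $k\geq 2$ admits the nontrivial decomposition $(m,a,k)=\overline{v}+(m,a-1,k-1)$, where $\overline{v}:=((0,1),1)$. Hence $\overline{v}$ is the only irreducible generator of $S$ with $a\neq 0$, and the linear functional $\ell\in\overline{N}$ reading off the middle coordinate pairs to $1$ with $\overline{v}$ and to $0$ with every other irreducible generator. Letting $x_0$ be the variable for $\overline{v}$, applying $\ell$ to any binomial relation $\sum a_j\overline{m}_j=\sum b_j\overline{m}_j$ forces $a_0=b_0$, so every binomial in $J$ involving $x_0$ is divisible by $x_0^{a_0}$ and cannot appear in a minimal generating set.

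For the backward direction, I would let $\overline{m}_0$ be the irreducible generator for the missing variable $x_0$, set $R'=K[x_j:j\neq 0]$ and $J':=(B_1,\ldots,B_p)\subset R'$, and use $J=J'R$ to obtain $S_P=R/J\cong (R'/J')[x_0]$. Because $R=R'[x_0]$ is a free $R'$-module, one has $R'\cap J'R=J'$, so $R'/J'$ is the semigroup algebra of the submonoid $S'\subset S$ generated by the remaining irreducible generators, and this ring identification promotes to a direct sum of semigroups $S=\N\overline{m}_0\oplus S'$. Since $\overline{m}_0=(m_0,1)$ is primitive in $\overline{M}$, I would then construct a primitive lattice functional $\ell\in\overline{N}$ with $\ell(\overline{m}_0)=1$ and $\ell|_{S'}=0$; slicing $C_P$ at height one, this exhibits $m_0$ as a vertex of $P$ at lattice distance one from the facet $F:=\{m\in P:\ell(m,1)=0\}$, so $P=\conv(\{m_0\}\cup F)=\Pi(F)$ after the evident lattice translation.

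The forward direction is essentially bookkeeping once the separating functional $\ell$ has been identified, and the main obstacle lies in the backward direction: one must promote the purely ring-theoretic splitting $S_P\cong (R'/J')[x_0]$ to a lattice-level direct sum $\overline{M}=\Z\overline{m}_0\oplus\overline{M}'$ with $S'\subset\overline{M}'$, which is what forces the pyramid structure to descend to $P$ as a lattice polytope in $M$. The primitivity of $\overline{m}_0$ together with the fact that the lattice points of $P$ affinely generate the ambient lattice are what make this step go through, but checking these lattice-theoretic refinements carefully is the technical heart of the proof.
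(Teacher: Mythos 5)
Your overall route (reading the missing variable as a splitting of the graded semigroup $S=C_P\cap\overline{M}$ and converting that splitting into a height-one apex) is the natural one, and it is essentially the argument behind the cited result; note the paper itself does not reprove this proposition but quotes it from [Bat06]. Your forward direction is fine: the middle-coordinate functional shows the apex variable occurs with the same exponent in both monomials of any binomial of $J$, so such a binomial lies in $x_0\cdot J$ and can never be a minimal generator. The ring-theoretic part of the converse is also sound: $J=J'R$, freeness of $R=R'[x_0]$ over $R'$ gives $R'\cap J=J'$, and comparing the monomial $K$-basis of $K[S]$ with the basis $\{{\bf x}^{s'}x_0^c\}$ of $K[S'][x_0]$ does upgrade the isomorphism to the unique-decomposition statement $S=\N\overline{m}_0\oplus S'$.

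Two steps in your converse, however, are asserted rather than proved, and one of your stated justifications is false. First, you write $\overline{m}_0=(m_0,1)$, i.e.\ you assume the missing variable has degree one; the hypothesis allows it to be a higher-degree generator, and the ``slice at height one'' step collapses if $\deg\overline{m}_0\geq 2$. This must be ruled out, e.g.: if $\deg\overline{m}_0\geq 2$, every vertex $v$ of $P$ gives a degree-one element $(v,1)$ whose unique decomposition forces $(v,1)\in S'$; since $\overline{m}_0\in C_P$, some positive multiple $t\,\overline{m}_0$ is a nonnegative integral combination of the $(v,1)$ and hence lies in $S'$, contradicting uniqueness of the decomposition of $t\,\overline{m}_0$. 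Second, the fact you invoke for the lattice-level splitting --- that the lattice points of $P$ affinely generate $M$ --- is simply not true in general; the simplex $P_1$ of Theorem \ref{class-aleq2} has only its four vertices as lattice points and they span an index-two sublattice (which is exactly why the degree-two generator $x_5$ exists). What you actually need, and what is true, is that the full semigroup $S=C_P\cap\overline{M}$ generates $\overline{M}$ as a group: $C_P$ is $(n+1)$-dimensional, so for any $u\in\overline{M}$ and a lattice point $w$ in the interior of $C_P$ one has $u=(tw+u)-tw$ with both terms in $S$ for $t\gg 0$. Combined with $\Z\overline{m}_0\cap\langle S'\rangle=0$ (again from uniqueness), this gives $\overline{M}=\Z\overline{m}_0\oplus\langle S'\rangle$, and the projection functional $\ell$ then makes your final slicing argument work; note also that $\ell$ being integral with $\ell(m_0,1)=1$ and $\ell\equiv 0$ on the remaining lattice points of $P$ is what guarantees the apex sits at lattice distance one, so that $P=\Pi(F)$ as lattice polytopes and not merely combinatorially.
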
 

\begin{prop}\cite[Prop. 3.6]{Bat06} 
Let $\overline{R}:= R/\langle y \rangle$ be a polynomial 
ring in  $I(P,1) - (n+1)$ variables obtained from the polynomial ring 
$R:= K[x_1, \ldots, x_{I(P,1)}]$ by quotient modulo a regular sequence 
$y_0, y_1, \ldots, y_n$ of elements of degree $1$. We denote 
by $\overline{J}$ the ideal $J/\langle y \rangle J$ in $\overline{R}$. 
Then the numbers and the degrees of minimal generators 
of ideals $J \subset R$ and $\overline{J} \subset \overline{R}$ 
are the same.   
\label{degrees}
\end{prop}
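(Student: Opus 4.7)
The plan is to prove the isomorphism $J/\langle y\rangle J \cong \overline{J}$ of graded $\overline{R}$-modules and then invoke the graded Nakayama lemma. Recall that for a finitely generated graded module $N$ over a positively graded $K$-algebra $S$ with irrelevant maximal ideal $\mathfrak{m}$, the dimension $\dim_K(N/\mathfrak{m}N)_k$ counts the number of minimal homogeneous generators of $N$ of degree $k$. Once the isomorphism above is established, the identification
\[ \overline{J}/\mathfrak{m}_{\overline{R}}\overline{J} \;\cong\; J/(\mathfrak{m}_R J + \langle y\rangle J) \;=\; J/\mathfrak{m}_R J, \]
which uses $\langle y\rangle \subset \mathfrak{m}_R$ and $\mathfrak{m}_{\overline{R}} = \mathfrak{m}_R/\langle y\rangle$, would give the desired degree-by-degree equality of minimal generators of $J$ and $\overline{J}$.

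To produce the isomorphism $J/\langle y\rangle J \cong \overline{J} = (J+\langle y\rangle)/\langle y\rangle$, the key point is the equality $J \cap \langle y\rangle = \langle y\rangle J$. First I would lift $y_0,\ldots,y_n$ to linearly independent elements of $R_1$; this is possible because every lattice point of $C_P\cap\overline{M}$ of degree $1$ is irreducible, so $J_1 = 0$ and the map $R_1 \to (S_P)_1$ is an isomorphism. In the polynomial ring $R$, any linearly independent set of linear forms is automatically a regular sequence, so $y_0,\ldots,y_n$ is $R$-regular and the Koszul complex $K_\bullet(y;R)$ is a free resolution of $\overline{R}$. Therefore
\[ \operatorname{Tor}_i^R(\overline{R}, S_P) \;=\; H_i\bigl(K_\bullet(y;S_P)\bigr), \]
and this vanishes for $i \geq 1$ by hypothesis, since $y_0,\ldots,y_n$ is a regular sequence on $S_P$. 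Tensoring the short exact sequence $0 \to J \to R \to S_P \to 0$ with $\overline{R}$ and using $\operatorname{Tor}_1^R(\overline{R},S_P)=0$ yields the exact sequence
\[ 0 \to J/\langle y\rangle J \to \overline{R} \to A_P \to 0, \]
in which the image of the left-hand map is exactly $\overline{J} \subset \overline{R}$.

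The only real obstacle is the vanishing of $\operatorname{Tor}_1^R(\overline{R}, S_P)$, equivalently the equality $J\cap\langle y\rangle = \langle y\rangle J$, and this is precisely where Cohen--Macaulayness of $S_P$ enters: a linear system of parameters on a Cohen--Macaulay graded ring is automatically a regular sequence, which forces Koszul homology in positive degrees to vanish. Everything else is formal --- lifting the $y_i$, the Koszul computation, and the Nakayama comparison --- so once the isomorphism $J/\langle y\rangle J \cong \overline{J}$ is in hand, the equality of numbers and degrees of minimal generators follows immediately.
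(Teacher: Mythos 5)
Your argument is correct. Note, however, that the paper does not prove this proposition at all: it is quoted from \cite[Prop. 3.6]{Bat06}, so there is no in-paper proof to compare against. What you have written is a complete and standard derivation of the cited fact: lifting $y_0,\dots,y_n$ to $R_1$ (legitimate since $J_1=0$, so $R_1\to (S_P)_1$ is an isomorphism and linear independence is preserved), observing that linearly independent linear forms are $R$-regular so the Koszul complex resolves $\overline{R}$, deducing $\operatorname{Tor}_1^R(\overline{R},S_P)=H_1(K_\bullet(y;S_P))=0$ from $S_P$-regularity of the sequence, and hence $J\cap\langle y\rangle=\langle y\rangle J$, i.e. $J/\langle y\rangle J\cong(J+\langle y\rangle)/\langle y\rangle=\overline{J}$; the graded Nakayama comparison $\overline{J}/\mathfrak{m}_{\overline{R}}\overline{J}\cong J/\mathfrak{m}_R J$ then gives the equality of numbers and degrees of minimal generators. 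This is exactly the mechanism the statement relies on (indeed, the paper's very notation ``$\overline{J}=J/\langle y\rangle J$ is an ideal in $\overline{R}$'' presupposes the identification you prove), and the Cohen--Macaulayness of $S_P$ enters only to guarantee that such regular sequences of degree-one elements exist. I see no gap; at most you could spell out that linear independence of the images of the $y_i$ in $(S_P)_1$ follows from their being a regular sequence, but that is a one-line remark.
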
 

Now we apply the above statements to a Gorenstein polytope $P$. 
In this case, the  graded $K$-algebras $S_P$ and $A_P$ are known to be 
Gorenstein. 
If $A := A_0
\oplus \cdots \oplus A_l$ is 
an Artinian  graded Gorenstein  $K$-algebra 
($A_0 = K$, $A_l \not= 0$), then the zero ideal in $A$ 
is irreducible and  the multiplication in $A$ 
defines a perfect pairing
 \[ A_i \times A_{l-i} \rightarrow A_l \cong  K.\] 
For our purposes it will be sufficient  to use the following more special 
fact: 

\begin{prop} \cite{Hun07}
  Let $A := A_0 \oplus A_1 \oplus
  A_2$ be an Artinian graded $K$-algebra 
 where $A_0 \cong A_2 \cong  K$ is  a field, and let $\langle \cdot, \cdot
  \rangle$ be a symmetric bilinear form on $A_1$
  given by the multiplication 
  \[ \langle \cdot , \cdot \rangle : A_1 \times A_1 \rightarrow A_2 \cong 
  K \qquad (x,y) \mapsto \langle x, y  \rangle:=  xy. \]  Then
  $A$ is a Gorenstein ring if and only if $\langle \cdot , \cdot
  \rangle$ is a non-degenerate symmetric bilinear form on
  $A_1$. 
\end{prop}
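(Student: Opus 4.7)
The plan is to invoke the standard characterization that an Artinian local $K$-algebra is Gorenstein if and only if its socle $\mathrm{soc}(A) = \mathrm{Ann}_A(\mathfrak{m})$ is a one-dimensional $K$-vector space, where $\mathfrak{m}$ denotes the maximal ideal. In our setting $\mathfrak{m} = A_1 \oplus A_2$ is the unique graded maximal ideal, and since the socle of a graded algebra is itself graded, the whole statement reduces to a degree-by-degree computation of $\mathrm{soc}(A)$.

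First I would observe that $A_2 \subseteq \mathrm{soc}(A)$. Indeed, for any $z \in A_2$ and any $a \in \mathfrak{m}$, the product $za$ lies in $A_{\geq 3} = 0$, so $z \cdot \mathfrak{m} = 0$. Since $A_2 \cong K$ is already one-dimensional, $A_2$ contributes exactly one dimension to the socle, and the socle decomposes as $\mathrm{soc}(A) = (\mathrm{soc}(A) \cap A_1) \oplus A_2$ (note that $A_0$ can contribute nothing to the socle, since $1 \notin \mathfrak{m}$ annihilator).

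Next I would identify $\mathrm{soc}(A) \cap A_1$ with the radical of the bilinear form. An element $x \in A_1$ lies in the socle if and only if $x \cdot A_1 = 0$ and $x \cdot A_2 = 0$; the second equation is automatic because $x A_2 \subseteq A_3 = 0$. Thus $x \in \mathrm{soc}(A) \cap A_1$ precisely when $xy = \langle x,y\rangle = 0$ for every $y \in A_1$, that is, precisely when $x$ lies in the radical of the form $\langle \cdot,\cdot\rangle$.

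Combining these two observations, $\dim_K \mathrm{soc}(A) = 1 + \dim_K \mathrm{rad}(\langle \cdot,\cdot\rangle)$. Hence $\mathrm{soc}(A)$ is one-dimensional if and only if the radical of $\langle \cdot,\cdot\rangle$ is trivial, i.e.\ the form is non-degenerate. Together with the Gorenstein criterion quoted above, this yields the equivalence. There is no real obstacle here beyond citing the correct form of the Gorenstein criterion for graded Artinian algebras; the argument is a direct degree-by-degree unwinding.
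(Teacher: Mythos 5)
Your argument is correct and complete. Note first that the paper itself gives no proof of this proposition: it is quoted from Huneke's lectures \cite{Hun07}, and the surrounding text frames the Gorenstein property via irreducibility of the zero ideal and the perfect pairings $A_i \times A_{l-i} \to A_l$, of which the stated fact is the special case $l=2$. Your route instead goes through the socle criterion: an Artinian local $K$-algebra with residue field $K$ is Gorenstein if and only if $\dim_K \soc(A)=1$, which applies here since $A$ is local with graded maximal ideal $\mathfrak{m}=A_1\oplus A_2$ (any element with nonzero degree-zero part is a unit). The degree-by-degree computation is sound: $\soc(A)$ is a graded ideal because it is the annihilator of the graded ideal $\mathfrak{m}$; $A_0$ contributes nothing since its nonzero elements are units and $\mathfrak{m}\supseteq A_2\neq 0$; $A_2\subseteq\soc(A)$ because $A_{\geq 3}=0$; and $\soc(A)\cap A_1$ is exactly the radical of the form $\langle\cdot,\cdot\rangle$, since $xA_2=0$ is automatic for $x\in A_1$. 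Hence $\dim_K\soc(A)=1+\dim_K\mathrm{rad}\langle\cdot,\cdot\rangle$, and the equivalence follows in both directions. This is essentially the standard proof one would find in the cited source; compared with the perfect-pairing formulation emphasized in the paper, your socle argument is more elementary and self-contained, at the cost of invoking the (standard, but unproved here) characterization of Artinian Gorenstein rings by a one-dimensional socle.
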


This characterisation allows to describe all 
relations in $A$ if   $\dim_K (A_1) \leq 2$:

\begin{prop} Let $A= A_0 \oplus A_1 \oplus
  A_2$ be an Artinian  
  graded Gorenstein $K$-algebra  and $a = \dim_K (A_1)  \in \{ 0, 1, 2 \}$. 
Then we have: 

{\rm (i)} if $a =0$, then $A \cong K[X] / (X^2)$,  $\deg (X) = 2$;

{\rm (ii)} if $a = 1$, then $A \cong K[X] / (X^3)$, $\deg(X) = 1$;

{\rm (ii)} if $a  = 2$, then $A \cong K[X, Y] / (XY, X^2 - \lambda Y^2)$, 
 $\deg(X) = \deg(Y) = 1$, $\lambda \neq 0$.
\label{Gor1a1}
\end{prop}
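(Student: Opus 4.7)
The plan is a case analysis on $a = \dim_K A_1 \in \{0,1,2\}$, using the non-degenerate symmetric pairing $\langle x, y \rangle = xy$ on $A_1$ supplied by the preceding proposition.

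For $a = 0$ one has $A = K \oplus A_2$, and this is generated as a $K$-algebra by any nonzero $X \in A_2$; since $X^2 \in A_4 = 0$, this forces $A \cong K[X]/(X^2)$ with $\deg X = 2$. For $a = 1$, let $X$ span $A_1$; non-degeneracy of $\langle \cdot, \cdot \rangle$ on a one-dimensional space gives $X^2 = \langle X, X\rangle \neq 0$, hence $X^2$ spans $A_2 \cong K$ and $X$ generates $A$. Combined with $X^3 \in A_3 = 0$, this yields $A \cong K[X]/(X^3)$ with $\deg X = 1$.

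The substantive case is $a = 2$. Since $K$ has characteristic zero, the non-degenerate symmetric bilinear form on the two-dimensional space $A_1$ can be orthogonally diagonalized: choose a basis $X, Y$ of $A_1$ with $\langle X, Y\rangle = 0$ and $\langle X, X\rangle, \langle Y, Y\rangle$ both nonzero. Fixing a generator $e$ of $A_2 \cong K$, write $X^2 = \alpha e$, $Y^2 = \beta e$, $XY = 0$, and set $\lambda := \alpha/\beta \neq 0$. Then the assignment $X \mapsto X$, $Y \mapsto Y$ defines a surjection of graded $K$-algebras
\[
  B := K[X, Y]/(XY,\; X^2 - \lambda Y^2) \twoheadrightarrow A.
\]
Matching Hilbert series on both sides will force this surjection to be an isomorphism.

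The main step that needs checking is that the Hilbert series of $B$ is indeed $1 + 2t + t^2$, i.e.\ that $B_k = 0$ for all $k \geq 3$. This is essentially routine: modulo $XY$, every monomial in $B$ of degree $k \geq 2$ is proportional to $X^k$ or $Y^k$, and the relation $X^2 = \lambda Y^2$ combined with $XY = 0$ yields $X^3 = X \cdot X^2 = \lambda X Y^2 = 0$ and symmetrically $Y^3 = 0$, so $B_k = 0$ for all $k \geq 3$. I do not anticipate a real obstacle beyond this bookkeeping; the only place where the characteristic hypothesis on $K$ enters is the diagonalization of the symmetric form in case $a = 2$.
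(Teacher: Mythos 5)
Your proof is correct and follows essentially the same route as the paper: in each case one maps a polynomial ring onto $A$ and matches dimensions, and for $a=2$ the paper's choice of independent $x,y\in A_1$ with $xy=0$, $x^2\neq 0$, $y^2\neq 0$ is exactly your orthogonal diagonalization of the pairing. The only difference is that you spell out what the paper leaves implicit (the diagonalization via characteristic $0$ and the explicit check that $K[X,Y]/(XY,\,X^2-\lambda Y^2)$ has Hilbert series $1+2t+t^2$), which is fine.
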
 

\noindent
{\em Proof.}  
(i) Let $x \in A_2$ be a nonzero element. Consider the $K$-algebra 
homomorphism $\psi\, : \, K[X] \to A$ defined by the condition $\psi(X) = x$. 
Since $A_2 A_2 \subset A_4 =0$ we have $x^2 =0$, i.e., $X^2 \in {\rm 
Ker}\, \psi$. So $\psi$  induces an isomorphism 
$A \cong K[X] / (X^2)$, because both rings $A$ and  $K[X] / (X^2)$ 
are $2$-dimensional $K$-vector spaces. 

(ii)  Let $x \in A_1$ be a nonzero element. Since the bilinear map 
$A_1 \times A_1 \to A_2$ is nondegenerate, we have $x^2 \neq 0$, i.e. 
$\psi$ is surjective. 
However, $x^3 =0$, because $A_3 = 0$. Consider the $K$-algebra 
homomorphism $\psi\, : \, K[X] \to A$ defined by the condition 
$\psi(X) = x$.  Then $X^3 \in {\rm Ker}\, \psi$.
 So $\psi$  induces an isomorphism $R \cong K[X] / (X^2)$, because  
both rings $R$ and  $K[X] / (X^2)$ are $3$-dimensional $K$-vector spaces.

(iii) Since the bilinear map $A_1 \times A_1 \to A_2 \cong K$ 
is nondegenerate there exist $K$-linearly independent elements 
$x, y \in A_1$  such that $xy =0$ and $x^2 \neq 0$, $y^2 \neq 0$. 
Consider the $K$-algebra 
homomorphism $\psi\, : \, K[X,Y] \to A$ defined by the conditions 
$\psi(X) = x$, $\psi(Y) =y$. Then  $XY, X^2 - \lambda Y^2 
\in {\rm Ker}\, \psi$ for some  $\lambda \in K \setminus \{0 \}$.
Comparing the dimensions of $K[X, Y] / (XY, X^2 - \lambda Y^2)$ and 
$A$ we get an isomorphism  $A \cong K[X, Y] / (XY, X^2 - \lambda Y^2)$.
\hfill $\Box$

\begin{theo} \label{class-aleq2}
Let $P$ be a $n$-Gorenstein polytope of degree $2$ such that its $h^*$-vector 
is $(1,a,1)$ where $a \in \{0,1,2\}$, i.e., $\nv(P) = a+2 \leq 4$. 
Assume that $P$ is not a pyramid over 
a $(n-1)$-dimensional Gorenstein polytope. We write the graded Gorenstein ring 
$S_P$ as quotient of a polynomial ring $K[x_1,\ldots, x_l]$ modulo an 
ideal $J$  generated by binomials. 
Then $n \leq 5$ and there exist up to isomorphim exactly the following  $15$ 
possibilities for $P$ 

{\rm (1)} $\dim P = 2$:  $\Delta_i$, $1 \leq i \leq 4$; 

{\rm (2)}  $\dim P = 3$: $P_i$, $1 \leq i \leq 7$;

{\rm (3)}  $\dim P = 4$: $Q_i$, $1 \leq i \leq 3$;  

{\rm (4)}  $\dim P = 5$: $R_1$

The corresponding binomial relations in $S_P$ are presented in the table 
below

\begin{table}[h]
  \renewcommand{\baselinestretch}{1.5}\normalsize
  \centering
  \begin{tabular}[h]{cc}
    \begin{tabular}[h]{c|l}
$\Delta_1$ & $x_1 x_2 x_3 = x_4^3$   \\ \hline
  $\Delta_2$ & $x_1 x_2 = x_5^2$, $x_3 x_4 = x_5^2$ \\  \hline 
$\Delta_3$ & $x_1 x_2 = x_5^2$, $x_3 x_4 = x_1 x_5$ \\ \hline
 $\Delta_4$ & $x_1 x_2 = x_5^2$, $x_3 x_4 = x_1^2$  \\ \hline
    $P_1$ & $x_1 x_2 x_3 x_4 = x_5^2$   \\ \hline  
    $P_2$ & $x_1 x_2 x_3 = x_4 x_5^2$  \\ \hline   
    $P_3$ & $x_1 x_2 = x_3^2$, $x_4 x_5 = x_6^2$    
          \end{tabular} &
{\begin{tabular}[h]{c|l}
 $P_4$ & $x_1 x_2 = x_3^2$, $x_1 x_4 = x_5 x_6^2$ \\ \hline
 $P_5$ & $x_1 x_2 = x_3^2$, $x_3 x_4 = x_5 x_6$ \\ \hline
    $P_6$ & $x_1 x_2 = x_3 x_4 $, $x_1 x_5 = x_2 x_6$ \\ \hline
    $P_7$ & $x_1 x_2 = x_5 x_6$, $x_1 x_2 = x_3 x_4$  \\ \hline  
    $Q_1$ & $x_1 x_2 x_3 = x_4 x_5 x_6$\\ \hline
    $Q_2$ & $x_1 x_2 = x_3 x_4$, $x_5 x_6 = x_7 x_1$\\ \hline
    $Q_3$ & $x_1 x_2 = x_3 x_4$, $x_5 x_6 = x_7^2$\\ \hline
    $R_1$ & $x_1 x_2 = x_3 x_4$, $x_5 x_6 = x_7 x_8$ 
  \end{tabular}}
\end{tabular}
  \label{tab:realtions}
\end{table}

\end{theo}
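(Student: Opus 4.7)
The plan is to reduce the classification of such polytopes $P$ to a classification of binomial presentations of the Artinian Gorenstein $K$-algebra $A_P$, which is highly constrained when $h^*(P)=(1,a,1)$ with $a\le 2$.

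First I compute how $S_P$ is generated as a $K$-algebra. The Ehrhart formula gives $|P\cap M|=I_P(1)=n+1+a$, so Proposition~\ref{generatA} yields $g_1(A_P)=a$. Since $S_P$ is Gorenstein (Remark~\ref{refl-cone-dual}), $A_P$ is an Artinian graded Gorenstein $K$-algebra with Hilbert polynomial $1+at+t^2$, and Proposition~\ref{Gor1a1} identifies $A_P$ up to graded isomorphism in each of the three cases $a\in\{0,1,2\}$. Reading off $g_k(A_P)$ for $k\ge 2$ from those three models shows that $I_P(k)=g_k(A_P)$ vanishes for $k\ge 2$, except that $I_P(2)=1$ when $a=0$. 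Hence $S_P$ is minimally generated by $n+1$ lattice points of degree~$1$ and one of degree~$2$ when $a=0$, and by $n+1+a$ lattice points of degree~$1$ when $a\in\{1,2\}$. By Proposition~\ref{degrees} the defining toric ideal $J$ of $S_P$ has the same number and degrees of minimal binomial generators as the ideal cutting out $A_P$ from its polynomial ring: one of degree~$4$ when $a=0$, one of degree~$3$ when $a=1$, and two of degree~$2$ when $a=2$.

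Next I enumerate, up to the symmetric group action on the variables and rescaling, all binomial presentations of the prescribed shape. The crucial constraint comes from Proposition~\ref{pyramid1}: since $P$ is not a pyramid, every variable of $R$ must appear in at least one of the minimal binomials. A degree-$4$ binomial involving the extra degree-$2$ variable $y$ can accommodate at most $5$ distinct variables once $y$ is forced to appear, giving $n=3$; a degree-$3$ binomial uses at most $6$ distinct variables, bounding $n+2\le 6$; and two degree-$2$ binomials jointly involve at most $8$ distinct variables, bounding $n+3\le 8$. A short combinatorial case analysis of the admissible binomial shapes then produces, up to permutation of variables, exactly the $15$ presentations $\Delta_1,\dots,\Delta_4$, $P_1,\dots,P_7$, $Q_1,Q_2,Q_3$, $R_1$ listed in the table, distributed across dimensions $2,3,4,5$.

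The last and most delicate step is realizability: I must show each enumerated binomial presentation comes from a genuine $n$-dimensional Gorenstein polytope of degree~$2$, and that this polytope is unique up to lattice isomorphism. For dimension~$2$ this follows from the classification of reflexive polygons of normalized volume $\le 4$. For dimensions $3,4,5$, all entries except $P_1$ are Cayley polytopes associated to Minkowski sum decompositions of reflexive polygons, and the entries match bijectively with the Emiris--Tsigaridas catalogue of Tables~\ref{minkdec1}--\ref{minkdec2} via Theorem~\ref{CayleyGorenstein}. The single non-Cayley example $P_1$ is the unique Gorenstein tetrahedron whose associated cone contains an extra irreducible lattice point at height~$2$, and must be constructed by hand. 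This realizability and uniqueness verification is the main obstacle of the proof; it is carried out by inspecting the finite list of candidates produced in the previous step.
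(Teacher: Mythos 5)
Your strategy is the same as the paper's: compute the minimal generators of $S_P$ via \ref{generatA}, identify the Artinian quotient $A_P$ by \ref{Gor1a1}, transfer the number and degrees of the minimal relations to the toric ideal $J$ by \ref{degrees}, and then use the non-pyramid criterion \ref{pyramid1} to force every variable into the binomials, bound $n$, and enumerate. Your cases $a=1$ (one cubic relation, at most $6$ variables, $n\le 4$) and $a=2$ (two quadrics, at most $8$ variables, $n\le 5$) reproduce the paper's argument, and the final enumeration is the same finite check.

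The case $a=0$, however, contains a genuine gap as you state it. You claim that a degree-$4$ binomial containing the degree-$2$ variable $y$ involves at most $5$ distinct variables, but that is false for an abstract binomial of this shape: $yx_1x_2 - x_3x_4x_5x_6$ has degree $4$, contains $y$, and involves $7$ variables, so your count by itself does not exclude $n=4$ or $n=5$. The paper closes exactly this hole with geometric input: since $\dim_K(S_P)_1=n+1$, the polytope $P$ has exactly $n+1$ lattice points, all vertices, so $P$ is a simplex; the degree-$2$ generator corresponds to a point $v_{n+2}\in 2P$, hence $(v_{n+2},2)=\sum_i c_i(v_i,1)$ with \emph{nonnegative} rational $c_i$, and the unique minimal relation of degree $4$ must therefore be $\prod_i x_i^{2c_i}=x_{n+2}^2$ with $2c_i\in\Z_{\ge 0}$. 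Only then does \ref{pyramid1} give $2c_i\ge 1$ for all $i$, so $n+1\le \sum_i 2c_i=4$ and the relation is $x_1x_2x_3x_4=x_5^2$. You also need the paper's preliminary remark that $n\ge 3$ here (the algebra of a lattice polygon is generated in degree $1$), since otherwise your list would have to eliminate spurious two-dimensional candidates such as $y^2=x_1^2x_2x_3$. Two smaller points: the realizability and uniqueness step you single out as the main obstacle is treated as immediate in the paper (each listed binomial ideal is visibly the toric ideal of a concrete polytope, e.g.\ a Cayley polytope of segments and triangles), and your aside that $P_1$ is the one non-Cayley entry is off: by \ref{cay} the unique non-Cayley Gorenstein polytope of degree $2$ is $2S_3=P_{15}$, which has $\nv(P_{15})=8$ and does not occur in this theorem, whereas $P_1$ itself admits a Cayley description.
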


\noindent
{\em Proof.} 
Let us consider all three values of  $a$ separately. 

Case 1: $a =0$. By \ref{generatA}, the graded $K$-algebra 
$S_P$ is minimally generated by $n+1$ elements of degree $1$ and 
by one element of degree $2$. 
First we note that $n \geq 3$, because for any $2$-dimensional 
lattice polytope $\Delta$ the graded $K$-algebra $S_\Delta$ is generated by elements 
of degree $1$. So $S_P$ has $n+2 \geq 5$ generators. Since $\dim (S_P)_1 =1$ 
the polytope $P$ contains exactly $n+1$ lattice points. These lattice 
points must be vertices, because $\dim P = n$. Thus, $P$ 
is a $n$-dimensional lattice simplex. Let $x_1, \ldots, x_{n+1}$ be variables 
in the polynomial ring $R= K[x_1, \ldots, x_{n+1}, x_{n+2}]$ 
whose images in $S_P$ are monomials ${\bf x}^{\overline{m}}$ corresponding 
to vertices $v_1, \ldots, v_{n+1}$ of $P$. Since $\deg x_{n+2} = 2$, 
the image of $x_{n+2}$ in $S_P$ is a monomial corresponding to an irreducible  
lattice point $v_{n+1} \in 2P \cap M$ which is not an {\em integral}  
nonnegative linear 
combination of $v_1, \ldots, v_{n+1}$. Since $P$ is a convex 
hull of $v_1, \ldots, v_{n+1}$, there exist nonnegative  
{\em rational} numbers $c_1, \ldots, c_{n+1}$ such that 
 \[ \sum_{i =1}^{n+1} c_i (v_i,1) = (v_{n+2}, 2). \]
 By \ref{degrees}, there is only one 
binomial relation among $n+2$ generators of $S_P$ and this relation 
has degree $4$. This implies, $2c_i \in \Z$ $(i =1, \ldots, n+1)$ and 
the single binomial relation in $S_P$ has form
$\prod_{i =1}^{n+1}x_i^{2c_i} = x_{n+2}^2$. 
Since $P$ is not a pyramid, by \ref{pyramid1}, $2c_i \geq 1$ 
$(i =1, \ldots , n+1)$. 
On the other hand, $\sum_{i=1}^{n+1} 2c_i = 4 = \deg x_{n+2}^2$. This 
implies $n+1 \leq 4$. Thus, $3$ is the only possible value of $n$ and 
$2c_1 = 2c_2 = 2c_3 = 2c_4 =1$. So we come to the binomial relation
$ x_1x_2 x_3 x_4 - x_5^2 =0$. 
This relation defines a $3$-dimensional Gorenstein simplex which we denote by 
$P_1$.

Case 2: $a =1$.  By \ref{generatA} and  \ref{degrees}, the graded $K$-algebra 
$S_P$ is minimally generated by $n+2$ elements of degree $1$ 
satisfying a single cubic binomial relation. Such a binomial relation 
includes at most $6$ variables from $\{ x_1, \ldots, x_{n+2} \} \subset R$. 
By \ref{pyramid1}, this implies 
$n+2 \leq 6$, i.e., $n \leq 4$. Moreover, 
if $n =4$, then up to renumeration of variables this binomial must be 
$x_1x_2x_3 - x_4 x_5 x_6$ (polytope $Q_1$). If $n =2$, then $P$ is a reflexive 
triangle and the binomial has form $x_1x_2x_3 - x_4^3$ (polytope $\Delta_1$). 
By \ref{pyramid1},
if $n = 3$ then at least one monomial in the binomial 
is product of $3$ different 
variables. Without loss of generality, we can assume that this monomial 
is $x_1x_2x_3$. Then the second monomial must have form $x_i^2 x_j$. Up to 
renumeration of variables we obtain the binomial relation 
$x_1x_2x_3 - x_4x_5^2$
which defines a $3$-dimensional Gorenstein polytope $P_2$. 

Case 3:  $a =2$. By \ref{generatA} and  \ref{degrees}, the graded $K$-algebra 
$S_P$ is minimally generated by $n+3$ elements of degree $1$ 
satisfying two quadratic  binomial relations. These two quadratic relations 
include at most $8$ variables from $\{ x_1, \ldots, x_{n+3} \} \subset R$. 
By \ref{pyramid1}, this implies $n+3 \leq 8$, i.e., $n \leq 5$. The quadratic 
monomials appearing in the binomials are either $x_i^2$, or $x_ix_j$. It is not 
difficult to enumerate all possibilities and get the corresponding Gorenstein 
polytopes 
$$\Delta_2,\Delta_3, \Delta_4, P_3, P_4, P_5, P_6, P_7, Q_2, Q_3, R_1.$$
\hfill $\Box$

\begin{tabular}[h]{ccc}
  \begin{minipage}[h]{4cm}
    \input{P1Mod.pstex_t}
  \end{minipage}&

  \begin{minipage}[h]{4cm}
    \input{P5Mod.pstex_t}
  \end{minipage} &

  \begin{minipage}[h]{4cm}
    \input{P2Mod.pstex_t}
  \end{minipage}\\

  \begin{minipage}[h]{4cm}
    \centering {\bf $P_1$}
  \end{minipage} & 

  \begin{minipage}[h]{4cm}
    \centering {\bf $P_2$}
  \end{minipage} &

  \begin{minipage}[h]{4cm}
    \centering {\bf $P_3$}
  \end{minipage}\\ 

  \begin{minipage}[h]{4cm}
    \centering $x_1 x_2 x_3 x_4 = x_5^2$
  \end{minipage} & 

  \begin{minipage}[h]{4cm}
    \centering $x_1 x_2 x_3 = x_4 x_5^2$
  \end{minipage} &

  \begin{minipage}[h]{3cm}
    \centering $x_1 x_2 = x_3^2$, $x_4 x_5 = x_6^2$
  \end{minipage}\\ \\ 

\end{tabular} \vspace*{-1em}


\begin{tabular}[h]{ccc}
  \begin{minipage}[h]{4cm}
    \input{P4Mod.pstex_t}
  \end{minipage} &

  \begin{minipage}[h]{4cm} 
    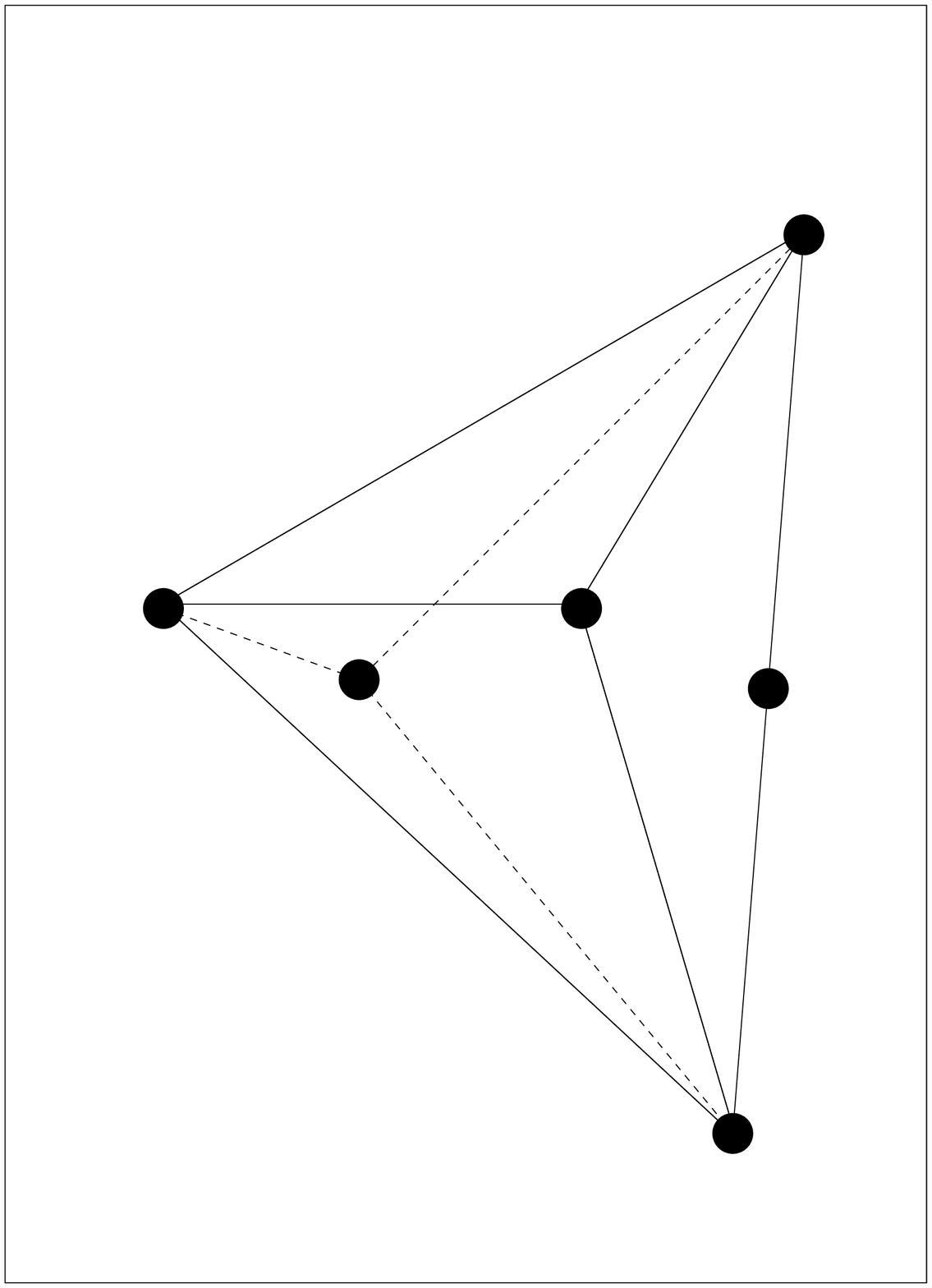
  \end{minipage} &

  \begin{minipage}[h]{4cm}
    \input{P10Mod.pstex_t}
  \end{minipage} \\ \\

  \begin{minipage}[h]{4cm}
    \centering {\bf $P_4$}
  \end{minipage} & 

  \begin{minipage}[h]{4cm}
    \centering {\bf $P_5$}
  \end{minipage} &

  \begin{minipage}[h]{4cm}
    \centering {\bf $P_6$}
  \end{minipage}\\ 

  \begin{minipage}[h]{3cm}
    \centering $x_1 x_2 = x_3^2$, $x_1 x_4 = x_5 x_6$
  \end{minipage} & 

  \begin{minipage}[h]{3cm}
    \centering $x_1 x_2 = x_3^2$, $x_3 x_4 = x_5 x_6$
  \end{minipage} &

  \begin{minipage}[h]{4cm}
    \centering $x_1 x_2 = x_3 x_4$, $x_1 x_5 = x_2 x_6$
 \end{minipage} \\ \\ 
\end{tabular} \vspace*{-1em}


\begin{center}

\begin{tabular}[h]{c}
 \begin{minipage}[h]{4cm}
    \input{P11Mod.pstex_t}
  \end{minipage}\\ \\
   \begin{minipage}[h]{4cm}\vspace*{-2em}
    \centering $P_7$
 \end{minipage} \\
 \begin{minipage}[h]{4.5cm} \vspace*{-1em}
    \centering $x_1 x_2 = x_5 x_6$,\ $x_1 x_2 = x_3 x_4$ \vspace*{1em}
 \end{minipage}
\end{tabular} \vspace*{-1em}

\end{center}

\section{Gorenstein polytopes of degree $2$}
\label{Properties}

Let $P\subset M_{\mathbb{R}}$ be a 
$n$-dimensional Gorenstein polytope of degree $2$ with $h^*$-polyomial
$h^*_P(t) = 1 + at + t^2$, $a \geq 0$. Then we have 
 \begin{eqnarray*}
    i(P,k) & = & {n+k \choose n} + {n+k-1 \choose n} a + {n+k-2 \choose n}\\
        & = & \frac{1}{k!} (a + 2)k^n +\ {\text \rm terms\ of\ lower\ order},
  \end{eqnarray*}
\[ | P \cap M| = i(P,1) = { n+1 \choose n}
  + {n \choose n} a + {n-1 \choose n} = n+a+1. \]
In particular, $$\nv(P) = a+2 = |P \cap M| - n +1.$$ 

\begin{prop}\label{volume}
  Let $P\subset M_{\mathbb{R}}$ be an $n$-dimensional Gorenstein
  polytope of degree $2$. Then for any lattice subpolytope $P' \subset
  P$ one has $\nv(P') \leq |P' \cap M| - n +1$. Moreover, one has:
\begin{enumerate}
\item $\nv(P') =  |P' \cap M | - n +1$ if and only if
$P'$ a Gorenstein polytope of degree $2$;
\item $\nv(P') =  |P' \cap  M| - n$
 if and only if $\deg P' \leq 1$.
\end{enumerate}
\label{latt}
\end{prop}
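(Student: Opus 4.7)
The plan is to reduce everything to Stanley's $h^*$-vector and its monotonicity (Remark \ref{monotonyhvector}), exploiting the very rigid shape $h^*_P(t)=1+at+t^2$ forced by the Gorenstein-of-degree-$2$ hypothesis on $P$. I will take the statement to refer to \emph{full-dimensional} lattice subpolytopes $P'\subseteq P$, since both $\nv(P')=\sum_j h_j^*(P')$ and the Ehrhart computation below are written naturally in the ambient dimension $n$.

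The first step is monotonicity: from $P'\subseteq P$ one gets $h_j^*(P')\leq h_j^*(P)$ for every $j$, hence
\[ h_0^*(P')=1,\qquad h_2^*(P')\in\{0,1\},\qquad h_j^*(P')=0 \text{ for all } j\geq 3, \]
and in particular $\deg P'\leq 2$. The second step is a double computation. On the one hand
\[ \nv(P') \;=\; \sum_{j=0}^n h_j^*(P') \;=\; 1+h_1^*(P')+h_2^*(P'). \]
On the other hand, using $|P'\cap M|=i(P',1)=\sum_j h_j^*(P')\binom{n+1-j}{n}$ together with the fact that $\binom{n+1-j}{n}$ equals $n+1$ for $j=0$, equals $1$ for $j=1$, and vanishes for $j\geq 2$, one obtains
\[ |P'\cap M|-n+1 \;=\; 2+h_1^*(P'). \]
Subtracting yields $|P'\cap M|-n+1-\nv(P')=1-h_2^*(P')\geq 0$, which is the claimed inequality.

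The two equality statements then fall out. $\nv(P')=|P'\cap M|-n+1$ is equivalent to $h_2^*(P')=1$, and combined with $h_0^*(P')=1$ together with the vanishing of $h_j^*(P')$ for $j\geq 3$, this says that the $h^*$-polynomial of $P'$ is a palindrome of degree $2$, so Corollary \ref{h*dual} identifies this condition with $P'$ being a Gorenstein polytope of degree $2$. Similarly, $\nv(P')=|P'\cap M|-n$ is equivalent to $h_2^*(P')=0$, which together with $h_j^*(P')=0$ for $j\geq 3$ means precisely $\deg P'\leq 1$. I do not anticipate a real obstacle beyond the dimensional bookkeeping already flagged; all the heavy lifting is done by Stanley's monotonicity theorem and by the palindromic characterisation of Gorenstein polytopes.
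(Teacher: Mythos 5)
Your argument is correct and follows essentially the same route as the paper's own proof: Stanley's monotonicity applied to the $h^*$-vector $(1,a,1)$ of $P$, evaluation of $h^*_{P'}$ at $t=1$ for the volume and of the Ehrhart polynomial at $k=1$ for the point count, and the palindromicity criterion of Corollary \ref{h*dual} to recognise the Gorenstein-of-degree-$2$ case; your version merely spells out the binomial bookkeeping that the paper leaves implicit. The full-dimensionality caveat you flag matches the paper's implicit reading, so there is nothing to repair.
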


\begin{proof}
  We use the formula for the $h^*$-polynomial of $P$
  \[ h^*_P(t) = 1 + (|P\cap M| - n-1)t + t^2 \]
  and the equality $\nv (P) = h^*_P(1) = |P\cap M| - n+1$.  
By \ref{monotonyhvector}, we obtain that either $\deg P' =2$
  and $P'$ is a Gorenstein polytope (see \ref{h*dual}), 
or $\deg P' \leq 1$. In the first case,
  we obtain $\nv(P') = |P' \cap M| - n +1$ (as for $P$ above).  In the
  second case, the $h^*$-polynomial of $P'$ is linear and we obtain $\nv (P')
  = h^*_{P'}(1) = |P' \cap M| - n$.
\end{proof}

Let us  make some general remarks about pyramids:

\begin{prop}
A $n$-dimensional  Gorenstein polytope $P$ of degree $2$ is a
pyramid $\Pi(P')$ 
over an $(n-1)$-dimensional Gorenstein polytope $P'$ of degree $2$
if and only if
there exists a proper face $Q \subset P$ such that $\deg Q = 2$.
\label{pyr1}
\end{prop}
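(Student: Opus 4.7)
The plan is to treat the two directions separately. The forward direction is immediate: if $P = \Pi(P')$ with $P'$ an $(n-1)$-dimensional Gorenstein polytope of degree $2$, then $P'$ is a facet of $P$ (the base opposite the apex) and $\deg P' = 2$.

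For the converse, suppose $Q$ is a proper face of $P$ with $\deg Q = 2$. First I would reduce to the case where $Q$ is a facet. By Stanley's monotonicity (Remark \ref{monotonyhvector}), $h^{*}_2(F) \leq h^{*}_2(P) = 1$ for every face $F \subseteq P$, while $h^{*}_2(F) \geq h^{*}_2(Q) = 1$ whenever $Q \subseteq F$. Extending $Q$ to a chain $Q = F_{k_0} \subsetneq F_{k_0+1} \subsetneq \cdots \subsetneq F_{n-1}$ of faces of $P$ of consecutive dimensions, each $F_k$ has $h^{*}$-polynomial $1 + c_k t + t^2$ and is therefore Gorenstein of degree $2$ by Corollary \ref{h*dual}. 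Replacing $Q$ by $F_{n-1}$, I may assume $Q$ is a facet of $P$; as a Gorenstein polytope of degree $2$ and dimension $n-1$ it has index $n-2$, so $(n-2)Q$ is reflexive inside $\operatorname{aff}(Q)$.

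Next I would exhibit the apex of the pyramid. Choose coordinates so that $\operatorname{aff}(Q) = \{x_n = 0\}$ and $P \subseteq \{x_n \geq 0\}$, so the inward primitive normal to $Q$ becomes $u_Q = e_n^{*}$ and $\alpha_Q = 0$ in the facet inequality. Writing $p_0 \in M$ for the unique interior lattice point of the reflexive polytope $(n-1)P$, the Gorenstein-cone relations (Remark \ref{refl-cone-dual}) give $\langle p_0, u_F \rangle + (n-1)\alpha_F = 1$ for every facet $F$ of $P$; at $F = Q$ this yields $(p_0)_n = 1$. The remaining claim is that the set of lattice points $v \in P \cap M$ with $v_n > 0$ consists of a single vertex $v$ with $v_n = 1$; this would be the apex, giving $P = \Pi(Q)$.

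The hard part will be verifying this ``height-one and uniqueness'' claim. I would combine the reflexivity of $(n-1)P - p_0$, which pins the facet $(n-1)Q - p_0$ at $x_n = -1$ and restricts the $e_n^{*}$-coordinates of other lattice points, with the intrinsic reflexivity of $(n-2)Q$ at the different scale $n-2$. The scaling mismatch should forbid any vertex of $P$ lying at height $\geq 2$ and force exactly one lattice point at height $1$. A final count $|P \cap M| - |Q \cap M| = (a+n+1) - (b+n) = (a-b)+1$ then yields $a = b$ and completes the identification of $P$ as a lattice pyramid over the $(n-1)$-dimensional Gorenstein polytope $Q$ of degree $2$.
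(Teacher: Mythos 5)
Your forward direction and your reduction to the case where $Q$ is a facet $\Gamma$ of $P$ are fine and agree with the paper's argument (Stanley monotonicity forces $h^*_2(\Gamma)=1$ and $h^*_j(\Gamma)=0$ for $j\geq 3$, so $\Gamma$ is Gorenstein of degree $2$ by \ref{h*dual}). The problem is that everything after that is announced rather than proved: the statement you yourself call ``the hard part'' --- that $P$ has exactly one lattice point outside $\operatorname{aff}(\Gamma)$ and that it sits at lattice height one --- is exactly the substance of the converse, and your plan for it (``the scaling mismatch should forbid any vertex of $P$ lying at height $\geq 2$ and force exactly one lattice point at height $1$'') is not an argument. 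Reflexivity of $(n-1)P$ by itself does not bound the heights of lattice points of $P$ over the facet $\Gamma$, and nothing in your sketch explains how the two reflexive scalings $(n-1)P$ and $(n-2)\Gamma$ are to be played against each other. The closing count $|P\cap M|-|Q\cap M|=(a-b)+1$ giving $a=b$ is a consequence of the pyramid structure, not a step toward establishing it, so it cannot ``complete the identification.''

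For comparison, the paper closes this gap with two short arguments you would need to supply. First, uniqueness: let $p$ be the unique relative-interior lattice point of $(n-2)\Gamma$ (it exists because $\Gamma$ is Gorenstein of index $n-2$); if $q_1\neq q_2$ were two lattice points of $P$ not lying on $\Gamma$, then $q_1+p$ and $q_2+p$ would be two distinct interior lattice points of $(n-1)P=P+(n-2)P$, contradicting reflexivity of $(n-1)P$. Hence $P$ is combinatorially a pyramid over $\Gamma$ with a single apex $q$. Second, height: writing $h$ for the integral height of $q$ over $\operatorname{aff}(\Gamma)$, one has $\nv(P)=h\,\nv(\Gamma)$, while the lattice-point formulas for degree-$2$ Gorenstein polytopes (as in \ref{latt}) give $\nv(P)=|P\cap M|-n+1$ and $\nv(\Gamma)=|\Gamma\cap M|-(n-1)+1$, and $|P\cap M|=|\Gamma\cap M|+1$ by the uniqueness step; combining these yields $\nv(P)=\nv(\Gamma)$, hence $h=1$ and $P=\Pi(\Gamma)$. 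Without these (or equivalent) arguments your proposal does not prove the proposition.
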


\begin{proof}
  One direction of this statement (i.e., if $P$ is a pyramid) is obvious.
  
  Now we assume that a proper face $Q \subset P$ such that $\deg Q = 2$.  Then
  $Q$ is contained in a facet $\Gamma \subset P$. By  
\ref{monotonyhvector}, $\deg Q \leq \deg \Gamma
  \leq \deg P$ and  we conclude that $\deg \Gamma = 2$. Thus, without loss of
  generality we can assume that $Q$ has codimension $1$ (i.e. $Q = \Gamma$).
  Moreover, since $h^*$-polynomial is monotone, the leading coefficient of
  $h^*$-polynomial of $\Gamma$ must be $1$. Thus, $\Gamma$ is a Gorenstein
  polytope of degree $2$ (see \ref{h*dual}). 
Let $p$ be the unique interior lattice point of
  $(n-2)\Gamma$. First we show that there exists only one lattice point $q$ of
  $P$ which is not contained in $\Gamma$. If there were two different lattice
  points $q_1, q_2 \in P \setminus \Gamma$, then we would have two different
  lattice points $q_1 + p$ and $q_2 + p$ in the interior of $(n-1)P$
  (contradiction). Therefore, $q$ is a vertex of $P$ which is  
combinatorially equivalent to a
  pyramid over $\Gamma$. It remains to show that the integral height $h$ of
  the vertex $q$ in this pyramid is $1$.  One has $\nv(P) = h \nv(\Gamma)$.
  One the other hand, by \ref{latt},  we have
  \[ \nv(P) = |P \cap M| -n +1, \;\; \nv(\Gamma) = |\Gamma \cap M| -n. \]
  We have already shown that $ |P \cap M| = |\Gamma \cap M| +1$.
  Therefore, $\nv(P) = \nv(\Gamma)$ and $h =1$.
\end{proof}

\begin{coro}
  A Gorenstein polytope $P$ of degree $2$ is not a pyramid if and only if all
  proper faces of $P$ and $P^*$ have degree $\leq 1$.
\label{pyr3}
\end{coro}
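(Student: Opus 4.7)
The plan is to combine Proposition \ref{pyr1} with Remark \ref{dual-pyramid} to get both directions directly, with no further combinatorics needed.

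For the forward direction, assume $P$ is not a pyramid. Applying the contrapositive of Proposition \ref{pyr1} to $P$ itself gives immediately that every proper face $Q \subset P$ satisfies $\deg Q \leq 1$. To get the analogous statement for $P^*$, invoke Remark \ref{dual-pyramid}: since $P$ is not a pyramid, the dual Gorenstein polytope $P^*$ is also not a pyramid (the duality commutes with the pyramid construction). Applying Proposition \ref{pyr1} to $P^*$ now yields that every proper face of $P^*$ has degree $\leq 1$ as well.

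For the backward direction, the hypothesis that every proper face of $P$ has degree $\leq 1$ is already enough on its own. Indeed, Proposition \ref{pyr1} characterizes $n$-dimensional Gorenstein pyramids of degree $2$ by the existence of a proper face of degree $2$; ruling out such a face rules out $P$ being a pyramid. (The assumption on $P^*$ is thus automatically consistent by Remark \ref{dual-pyramid} but is not needed for this implication.)

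Since both implications follow immediately from the two cited results, there is no real obstacle here; the content of the corollary is purely a packaging of \ref{pyr1} (applied to $P$ and to $P^*$) via the pyramid-duality \ref{dual-pyramid}. The only small point to be careful about is that Proposition \ref{pyr1} characterizes pyramids over $(n-1)$-dimensional Gorenstein polytopes of degree $2$, which is exactly the notion of ``pyramid'' used throughout this section for Gorenstein polytopes of degree $2$, so no ambiguity arises.
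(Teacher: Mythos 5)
Your proposal is correct and is essentially the paper's own argument: the paper also deduces the corollary immediately by combining Proposition \ref{pyr1} (applied to $P$ and to $P^*$) with the pyramid-duality of Remark \ref{dual-pyramid}. Your added remarks (that only the hypothesis on $P$ is needed for the backward direction, and that a pyramid structure on a degree-$2$ Gorenstein polytope forces the base to be Gorenstein of degree $2$, since $h^*$ is preserved by lattice pyramids) are accurate and just make explicit what the paper leaves implicit.
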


\begin{proof} It follows immediately from \ref{dual-pyramid} and 
\ref{pyr1}. 

\end{proof}

From now until the end of this section we assume that 
$P \subset M_{\mathbb{R}}$ is  a $3$-dimensional
Gorenstein polytope of degree $2$.

\begin{prop}  If $P$ is a $3$-dimensional Gorenstein 
 polytope of degree $2$, then
  \[ \sum_{E \subset P, \dim E=1} \nv(E) \cdot \nv(E^{\ast}) 
= 12.\]
\label{12-3dim}
\end{prop}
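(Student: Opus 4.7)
The plan is to deduce the identity from the classical ``$24$-identity'' for $3$-dimensional reflexive polytopes, namely
\[ \sum_{E\text{ edge of }\Delta} \nv(E) \cdot \nv(E^*) = 24 \]
for any $3$-dimensional reflexive polytope $\Delta$ (this identity is equivalent, via Noether's formula, to $-K_{X} \cdot c_2(X) = 24$ for a smooth toric Fano $3$-fold, and extends to the Gorenstein setting). Since $P$ is a $3$-dimensional Gorenstein polytope of degree $d = 2$, it is of index $r = n + 1 - d = 2$, so $\Delta := 2P$ (translated so that its unique interior lattice point lies at the origin) is a $3$-dimensional reflexive polytope. The strategy is to compare, edge by edge, the contributions to the sum for $P$ and the sum for $\Delta$.

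First, I would observe that the homothety $x \mapsto 2x$ induces a bijection between edges $E$ of $P$ and edges $E_\Delta := 2E$ of $\Delta$ with doubled lattice length:
\[ \nv(E_\Delta) = 2\,\nv(E). \]
On the dual side, by Remark \ref{sublattice} the polytope $P^*$ coincides, as a subset of $N_{\R}$ up to a lattice translation, with the dual reflexive polytope $\Delta^* = (2P)^*$; the only difference is that $P^*$ is considered as a lattice polytope with respect to the sublattice $N' \subset N$ of index $r = 2$, whereas $\Delta^*$ is considered with respect to $N$. By Remark \ref{duality-Gor}, every facet of $P^*$ is isomorphic as a lattice polytope (with respect to $N'$) to the corresponding facet of $\Delta^*$ (with respect to $N$). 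Since every edge of $P^*$ lies in some facet, this facet-level isomorphism transports to the level of edges and gives
\[ \nv(E^*) = \nv(E^*_\Delta) \]
for each corresponding pair of edges $E^* \subset P^*$, $E^*_\Delta \subset \Delta^*$.

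Combining the two observations,
\[ \nv(E_\Delta) \cdot \nv(E^*_\Delta) = 2\,\nv(E) \cdot \nv(E^*) \]
for each edge $E$ of $P$. Summing over all edges of $P$ (equivalently of $\Delta$) and applying the $24$-identity to $\Delta$ gives
\[ 24 = \sum_E \nv(E_\Delta) \cdot \nv(E^*_\Delta) = 2 \sum_E \nv(E)\cdot\nv(E^*), \]
from which the claimed identity follows.

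The main subtle point is the equality $\nv(E^*) = \nv(E^*_\Delta)$ of dual edge lengths measured with respect to the different lattices $N'$ and $N$; everything hinges on the compatibility of the bijection in Remark \ref{duality-Gor} with the face structure, which in turn follows from the stated facet-wise lattice isomorphism. Given this, the reduction to the $24$-identity is formal, and the factor of $12 = 24/2$ is accounted for precisely by the doubling of edge lengths on the $M$-side under $P \mapsto 2P$.
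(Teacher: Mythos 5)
Your proof is correct and follows essentially the same route as the paper: the paper also applies the classical $24$-identity to the reflexive polytope $\Delta = 2P$, uses $\nv(2E) = 2\nv(E)$ on the $M$-side, and invokes Remark \ref{duality-Gor} for $\nv(E^*) = \nv(E^*_\Delta)$ on the dual side before dividing by $2$. Your extra remark that the facet-wise lattice isomorphism of \ref{duality-Gor} transports to edges is a reasonable elaboration of the step the paper cites directly.
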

\begin{proof}
Let $\Delta$ be  a reflexive
polytope of dimension $3$.  Then
\[ \sum_{F \subset \Delta, \dim F=1} \nv(F) \cdot 
\nv(F^{\ast}) = 24,\]
where $F^* \subset \Delta$ denote the dual to $F$  $1$-dimensional face 
of the dual reflexive polytope $\Delta^*$. This equality follows 
from the formula for the Euler number of Calabi-Yau hypersurfaces in toric 
varieties \cite[Cor. 7.10]{BD96} and the fact that the Euler number 
of $K3$ surfaces is $24$ (see also \cite{Haa05}). 

We apply this formula to the Gorenstein polytope 
$\Delta = 2P$. Then $F=2E$ and hence 
$\nv(F) = 2 \nv(E)$. On the other hand, 
by \ref{duality-Gor}, the dual $1$-dimensional faces $F^*$ and 
$E^*$ are isomorphic as lattice polytopes, i.e., $\nv(F^*) = \nv(E^*)$. 
So we get 
\[ 24 =  \sum_{F \subset \Delta, \dim F=1} \nv(F) \cdot 
\nv(F^{\ast}) =  \sum_{E \subset P \dim E=1}  2 \nv(E) \cdot 
\nv(E^{\ast}). \]
Dividing  by $2$, we obtain  the required equality. 
\end{proof}

\begin{exam}
{\rm  Let $\Delta$ be a $2$-dimensional reflexive polytope and 
$\Delta^*$ its dual. By \ref{dual-pyramid}, 
$P:= \Pi(\Delta)$ and $P^* = \Pi(\Delta^*)$ 
are two dual to each other $3$-dimensional 
Gorenstein polytopes of degree $2$. A $1$-dimensional face $E \subset P$ 
is either an edge connecting the vertex of the pyramid $\Pi(\Delta)$ 
to a vertex $v_i \in 
\Delta$ ($\nv(E)=1$), or a $1$-dimensional face of $\Gamma_j \subset 
\Delta$. Then the dual $1$-dimensional face $E^* \subset P^*$ is respectively 
either $1$-dimensional face $v_i^* \subset \Delta^*$, or an edge connecting
the vertex of the pyramid $\Pi(\Delta^*)$  with dual 
vertex $\Gamma_j^* \in \Delta^*$  ($\nv(E^*)=1$). 
Therefore  
\[ \sum_{E \subset P, \dim E=1} \nv(E) \cdot \nv(E^{\ast}) 
= \sum_{i} 1 \cdot \nv(v_i^*) + \sum_j \nv(\Gamma_j) \cdot 1. \]
By \ref{face-vol}, we have 
\[ \nv(\Delta) =  \sum_j \nv(\Gamma_j), \;\; \nv(\Delta^*) = 
\sum_{i} \nv(v_i^*). \]
So for $3$-dimensional  Gorenstein polytopes 
 $P:= \Pi(\Delta)$ and $P^* = \Pi(\Delta^*)$  the equality in  
\ref{12-3dim} is equivalent to the well-known identity for reflexive polygons:
\[ \nv(\Delta) + \nv(\Delta^*) = 12. \]
}
\end{exam}

\begin{exam}
{\rm  Another example is shown in Figure 3 below. The Gorenstein 
simplex  $P$ has $6$ edges
  $E_1, \dots, E_6$ $(\nv(E_i) =1)$. Its dual polytope
  $P^*$ has also $6$ edges $E_1^*, \ldots, E_6^*$  $(\nv(E_i^*) =2)$. 
Therefore, we have 
\[\sum_{E \subset P, \dim E =1} \nv(E) \cdot
  \nv(E^{\ast}) = \sum_{i=1}^6 \nv(E_i) \cdot
  \nv(E^{\ast}_i) = 6 \cdot (1 \cdot 2) = 12. \]
  \begin{figure}[h] \label{ExDualPolytopesDim3}
    \centering
    \input{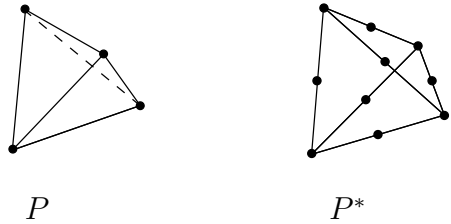}
    \caption{Dual Gorenstein polytopes in dimension $3$}
  \end{figure} }
\end{exam}

For any face $F \subset P$ we denote by ${\rm Int}(E)$ the number 
of lattice points in the relative interior of $F$.

\begin{prop} \label{propint} Let  $P$ be  a $3$-dimensional Gorenstein
  polytope of degree $2$ which is not a
  pyramid. Then
\[ |P \cap M| + |P^* \cap N | + \sum_{E \subset P} {\rm Int}(E)
\cdot {\rm Int}(E^*) = 14. \]
\end{prop}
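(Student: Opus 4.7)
The plan is to partition the lattice points of $P$ and of $P^*$ according to the relatively open face containing them, combine with the identity $\sum_E\nv(E)\nv(E^*)=12$ from Proposition~\ref{12-3dim}, and finish with Euler's formula for the $3$-polytope $P$.

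First I would write $|P\cap M|=\sum_{F\subseteq P}{\rm Int}(F)$ (and similarly for $P^*$), and evaluate each summand by the type of face. Since $\dim P=3$ and $\deg P=2$, Ehrhart reciprocity gives ${\rm Int}(P)=h^*_3(P)=0$. For a facet $\Gamma\subsetneq P$, the non-pyramid hypothesis together with Proposition~\ref{pyr1} forces $\deg\Gamma\leq 1$, so ${\rm Int}(\Gamma)=h^*_2(\Gamma)=0$. For edges one has ${\rm Int}(E)=\nv(E)-1$, and for vertices ${\rm Int}(v)=1$. The same analysis applies verbatim to $P^*$, which by Remark~\ref{dual-pyramid} is also a non-pyramidal $3$-dimensional Gorenstein polytope of degree $2$; moreover Remark~\ref{duality-Gor} ensures that $\nv(E^*)$ and ${\rm Int}(E^*)$ are unambiguous, since they agree when computed with respect to the lattices $N$ and $N'$ on the boundary face $E^*$. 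Using the duality $F\leftrightarrow F^*$, which in particular yields $f_0(P^*)=f_2(P)$ and a bijection between edges, the two lattice-point counts combine to
\[
|P\cap M|+|P^*\cap N|=f_0(P)+f_2(P)+\sum_E\bigl(\nv(E)+\nv(E^*)\bigr)-2f_1(P),
\]
the sum running over the edges $E$ of $P$.

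Next, from $(\nv(E)-1)(\nv(E^*)-1)=\nv(E)\nv(E^*)-\nv(E)-\nv(E^*)+1$ and Proposition~\ref{12-3dim} I get
\[
\sum_E{\rm Int}(E)\cdot{\rm Int}(E^*)=12-\sum_E\bigl(\nv(E)+\nv(E^*)\bigr)+f_1(P).
\]
Adding the two displays cancels the $\nv$-sums and leaves $f_0(P)-f_1(P)+f_2(P)+12$, which equals $14$ by Euler's formula $f_0-f_1+f_2=2$ for the boundary of a $3$-polytope. If one reads the sum in the statement as ranging over all faces of $P$ rather than just edges, the non-edge contributions vanish for the same degree reasons (each pair $(F,F^*)$ has at least one factor equal to zero), so the identity is unchanged. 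The only delicate point is the lattice convention for $P^*$, which is dispatched by Remark~\ref{duality-Gor}; everything else is bookkeeping that leverages the non-pyramid hypothesis to kill all facet- and interior-of-$P$ contributions.
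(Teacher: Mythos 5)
Your proof is correct and follows essentially the same route as the paper's: reduce both lattice-point counts to vertex and edge contributions using the non-pyramid hypothesis (via Propositions~\ref{pyr1}/\ref{pyr3} and Remark~\ref{dual-pyramid}), expand the products $\nv(E)\nv(E^*)=({\rm Int}(E)+1)({\rm Int}(E^*)+1)$ against the identity of Proposition~\ref{12-3dim}, and finish with Euler's formula $f_0-f_1+f_2=2$. The only cosmetic difference is that you organize the bookkeeping by summing ${\rm Int}(F)$ over all faces and justify the vanishing of facet and interior contributions through degree/Ehrhart reciprocity, whereas the paper quotes Corollary~\ref{pyr3} directly; the substance is identical.
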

\begin{proof}

If $P$ is not a pyramid, then all lattice points of $P$ are contained
in edges (see \ref{pyr3}), i.e.,
\[ |P \cap M | = v + \sum_{E \subset P} {\rm Int}(E), \]
where $v$ denotes the number of vertices of $P$. Denote by
$e$ and $f$ the numbers of edges and faces of $P$ respectively.
By Euler  formula, $e = v+f -2$. Applying \ref{12-3dim} and $\nv(E) = 
{\rm Int}(E)+1$, one has
  \begin{eqnarray*}
    12 & = & \sum_{i=1}^e ({\rm Int}(E_i) +1) ({\rm Int}(E^{\ast}_i) + 1)\\
    & = & e + \sum_{i=1}^e ({\rm Int}(E_i) + {\rm Int}(E^{\ast}_i) +
 {\rm Int}(E_i) \cdot 
   {\rm Int} (E^{\ast}_i) ) \\
    & = &  v + f - 2 + 
\sum_{i=1}^e {\rm Int}(E_i) +  \sum_{i=1}^e {\rm Int}(E^{\ast}_i) +
     \sum_{i=1}^e {\rm Int}(E_i)\cdot  {\rm Int}(E^{\ast}_i).
  \end{eqnarray*}
Using  $|P \cap M| = \sum_{i=1}^e {\rm Int}(E_i) + v$ and $|P^* \cap N | = 
 \sum_{i=1}^e {\rm Int}(E^{\ast}_i) + f$, we obtain the statement. 
\end{proof}\vspace*{1ex}

\begin{exam}
{\rm   The dual to each other polytopes $P$ and $P^*$ 
in Example  \ref{ExDualPolytopesDim3}
 contain  respectively  $4$ and $10$ lattice points. Since  all
  lattice points of $P$ are vertices, so we obtain  
$\sum_{E \subset P}
  {\rm Int}(E) \cdot {\rm Int}(E^*) = 0$, i.e.,  
 \[|P \cap M| + |P^*
  \cap N | + \sum_{E \subset P}{\rm Int} (E) \cdot {\rm Int}(E^*) =
   4 + 10 + 0 = 14.\]}
\end{exam}

\begin{prop}
  If $P$ is a $3$-dimensional Gorenstein polytope 
of degree $2$ which is not a pyramid, then
\[ \nv(P) + \nv(P^*) \leq 10. \]
\label{num10}
\end{prop}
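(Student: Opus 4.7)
The plan is to combine the two previously established identities: Proposition~\ref{12-3dim} (the ``12-identity'' for dual $1$-faces) and Proposition~\ref{propint} (the refined counting formula), so that the inequality reduces to a nonnegativity observation.

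First, since $n = 3$ and $P$ is a Gorenstein polytope of degree $2$, I would recall the identity $\nv(P) = |P \cap M| - n + 1 = |P \cap M| - 2$ from Proposition~\ref{volume}. Applying this both to $P$ and (using Remark~\ref{refl-cone-dual}, which ensures that $P^*$ is again a $3$-dimensional Gorenstein polytope of degree $2$) to $P^*$ gives
\[
\nv(P) + \nv(P^*) \;=\; |P \cap M| \,+\, |P^* \cap N| \,-\, 4.
\]

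Next, because $P$ is assumed not to be a pyramid, Proposition~\ref{propint} applies to give
\[
|P \cap M| \,+\, |P^* \cap N| \;=\; 14 \;-\; \sum_{E \subset P,\, \dim E = 1} \mathrm{Int}(E)\cdot \mathrm{Int}(E^*).
\]
Substituting into the previous display yields
\[
\nv(P) + \nv(P^*) \;=\; 10 \;-\; \sum_{E \subset P,\, \dim E = 1} \mathrm{Int}(E)\cdot \mathrm{Int}(E^*).
\]

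Finally, each term $\mathrm{Int}(E)\cdot \mathrm{Int}(E^*)$ is a product of two nonnegative integers, so the whole sum is nonnegative. This gives $\nv(P) + \nv(P^*) \leq 10$, as required. There is no real obstacle here; the work has already been done in Propositions~\ref{12-3dim} and~\ref{propint}, and the current statement is essentially a bookkeeping corollary.
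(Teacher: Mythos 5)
Your argument is correct and coincides with the paper's own proof: both use $\nv(P)=|P\cap M|-2$, $\nv(P^*)=|P^*\cap N|-2$, the identity of Proposition~\ref{propint}, and the nonnegativity of $\sum_{E}\mathrm{Int}(E)\cdot\mathrm{Int}(E^*)$. Nothing is missing.
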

\begin{proof}
  Recall that $\nv(P) = |P \cap M|-2$ and $\nv(P^*) = |P^* \cap
  M|-2$. By \ref{propint} and by  the obvious inequality
  \[  \sum_{E \subset P}{\rm Int}(E) \cdot {\rm Int}(E^*) \geq 0, \]
  we obtain the statement.
\end{proof}

\section{Minimal Gorenstein polytopes and the classification}
\label{VolMinPoly}

\begin{dfn} Let  $P \subset M_{\mathbb{R}}$ be an arbitrary 
$n$-dimensional Gorenstein polytope of
degree $2$ with vertices $\{ v_1, \dots, v_k\}$. 
We call $P$ {\sl minimal} if for all $j \in \{ 1, \dots, k\}$ the
 convex hull of the set   
$\{v_1, \dots, v_k\} \backslash
  \{v_j\}$ is not a Gorenstein polytope of degree $2$. 
  Analogously, we call $P$ {\sl maximal}, if  for any lattice point 
 $v_{k+1} \in M
  \backslash \{ v_1, \dots, v_k\}$ 
the convex hull of the lattice points  $\{v_1,
  \dots, v_k, v_{k+1}\}$ is not Gorenstein polytope of degree $2$.
\end{dfn}

\begin{rem} 
{\rm If a  $n$-dimensional Gorenstein polytope $P$ of degree $2$ is 
isomorphic to a subpolytope of  
another  $n$-dimensional Gorenstein polytope $Q$ of degree $2$, then 
we write $P \preccurlyeq Q$. If $P \subseteq Q$, then    
we immediately obtain that 
both reflexive polytopes $(n-1)P$ and 
$(n-1)Q$ contain 
a {\it common} unique interior lattice point $m \in M$. 
By  \ref{sublattice}, we obtain 
that the dual 
Gorenstein polytope $Q^*$ is naturally contained in the dual polytope 
$P^*$, i.e.  $Q^* \preccurlyeq P^*$.  Therefore the duality  $P 
\leftrightarrow P^*$ 
establishes a bijection between the set of minimal and the set 
of maximal Gorenstein polytopes:
\begin{eqnarray*}
  \{ \text{minimal Gorenstein polytopes} \} &\leftrightarrow& 
\{ \text{maximal
  Gorenstein polytopes} \}.
\end{eqnarray*}
}
\end{rem} 

\begin{prop}
  Let $v_i \in P$ be an arbitrary vertex of a $n$-dimensional
  Gorenstein polytope $P$ of degree $2$ and let $\Gamma_i^* \subset P^*$
  be the facet of the dual polytope $P^*$ which is dual to $v_i$. Then
  \[ P_i:= {\rm conv} (\{ P \cap M \} \setminus \{v_i \}) \]
  is a Gorenstein polytope of degree $2$ if and only if the facet 
$\Gamma_i^*$ is a
  standard $(n-1)$-dimensional basic 
simplex.  In particular, $P$ is minimal if and
  only if $\nv(\Gamma^*) \geq 2$ for all facets $\Gamma^* \subset P^*$.
\label{minim}
\end{prop}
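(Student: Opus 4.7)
The plan is to reduce the Gorenstein-of-degree-$2$ condition for $P_i$ to a single normalized-volume equality via Proposition~\ref{volume}, and then to identify this equality with a smoothness property of the tangent cone $C(v_i)$ via Proposition~\ref{Ci}. Translating so that $v_i=0$, and noting that removing the vertex $v_i$ removes exactly one lattice point, we obtain $|P_i\cap M|=|P\cap M|-1=\nv(P)+n-2$ (using the identity $|P\cap M|=\nv(P)+n-1$ recorded at the opening of Section~\ref{Properties}). Proposition~\ref{volume} then yields: $P_i$ is a Gorenstein polytope of degree $2$ if and only if $\nv(P_i)=\nv(P)-1$. Setting $K:=\overline{P\setminus P_i}$, a full-dimensional lattice polytope having $v_i$ as a vertex, additivity gives $\nv(K)=\nv(P)-\nv(P_i)$, so the statement reduces to showing $\nv(K)=1$ if and only if $\Gamma_i^\ast$ is a basic $(n-1)$-simplex.

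Next I would apply the standard fact that any full-dimensional lattice polytope $K$ satisfies $\nv(K)\geq 1$, with equality exactly when $K$ is a basic $n$-simplex. Since $v_i\notin P_i$, a small neighborhood of $v_i$ in $P$ is disjoint from $P_i$ and therefore lies entirely in $K$; hence the tangent cone of $K$ at $v_i$ coincides with the tangent cone of $P$ at $v_i$, which is $C(v_i)$. Consequently $K$ is a basic $n$-simplex with apex $v_i$ if and only if $C(v_i)$ is a smooth simplicial cone, generated by some lattice basis $e_1,\dots,e_n$ of $M$. In the smooth direction, every lattice point $v\in P$ with $v\neq 0$ can be written as $v=\sum c_je_j$ with $c_j\in\Z_{\geq 0}$ and $\sum c_j\geq 1$, so the half-space $H^+=\{\sum x_j\geq 1\}$ separates $v_i$ from all other lattice points of $P$, forcing $P_i=P\cap H^+$ and $K=\conv(0,e_1,\dots,e_n)$.

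It remains to identify smoothness of $C(v_i)$ with $\Gamma_i^\ast$ being a basic simplex. By Proposition~\ref{Ci}, $C(v_i)$ is dual to $C_{\Gamma_i^\ast}$, and cone duality preserves smoothness of rational polyhedral cones. The primitive ray generators of $C_{\Gamma_i^\ast}$ are the vertices of $\Gamma_i^\ast$ (primitive in $N$ because $(n-1)v_i$ is a primitive normal to the corresponding facet of the reflexive polytope $\Delta^\ast=((n-1)P)^\ast$, and the lattice structure of $\Gamma_i^\ast$ agrees with this facet by Remark~\ref{duality-Gor}); these vertices form a $\Z$-basis of $N$ precisely when $\Gamma_i^\ast$ is a basic $(n-1)$-simplex. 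Combining the three equivalences gives the main claim, and the ``in particular'' statement is then immediate: $P$ is minimal iff every $P_i$ fails to be Gorenstein of degree $2$, iff every facet $\Gamma_i^\ast\subset P^\ast$ fails to be a basic simplex, iff $\nv(\Gamma^\ast)\geq 2$ for every facet $\Gamma^\ast\subset P^\ast$ (the basic $(n-1)$-simplex being the unique lattice $(n-1)$-polytope of normalized volume $1$). The main technical care will be verifying the identity $P_i=P\cap H^+$ in the smooth direction and keeping track of the cone/lattice identifications furnished by Proposition~\ref{Ci} and Remark~\ref{duality-Gor}.
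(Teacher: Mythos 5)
Your argument follows essentially the same route as the paper's own proof: Proposition \ref{volume} reduces the Gorenstein-of-degree-$2$ condition for $P_i$ to the volume identity $\nv(P_i)=\nv(P)-1$, this is interpreted as cutting off a basic simplex at $v_i$, and Proposition \ref{Ci} together with Remark \ref{duality-Gor} converts unimodularity of the cone $C(v_i)$ into $\Gamma_i^*$ being a basic $(n-1)$-simplex --- indeed you make explicit the dual-cone step that the paper's proof leaves implicit (it stops at the new facet of $P_i$ being basic). One small repair: $K=\overline{P\setminus P_i}$ need not be convex, hence not literally a lattice polytope; it is a union of lattice pyramids $\conv(v_i,F)$ over the facets $F$ of $P_i$ visible from $v_i$, but since each such pyramid has normalized volume at least $1$, your conclusion that $\nv(K)=1$ forces $K$ to be a single basic simplex with apex $v_i$ still holds.
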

 
\begin{proof}
  By  \ref{volume}, we obtain  that
  $P_i$ is Gorenstein of degree $2$ if and only if $$\nv(P_i) = |P_i
  \cap M| - n + 1 = |P \cap M| - n - 2 = \nv(P)
  -1.$$  The latter  holds true if and only if $P_i$ is obtained 
from $P$ by cutting out a $n$-dimensional lattice polytope $S_i$ with  
$\nv(S_i) =1$  (i.e., $S_i$ is a basic simplex). In this case, the 
simplex $S_i$ is 
a convex hull of $v_i \in P$ and a simplicial facet $\Gamma_i \subset P_i$, 
where $\nv(\Gamma_i)=1$, i.e., $\Gamma_i$  is a
  standard $(n-1)$-dimensional basic simplex.
\end{proof}

\begin{coro} \label{sum-vol}
If $P$ and $Q$ two $n$-dimensional Gorenstein polytopes 
of degree $2$ such that  $Q \preccurlyeq P$. Then 
 
\[ \nv(P) + \nv(P^*) =  \nv(Q) + \nv(Q^*). \]
\end{coro}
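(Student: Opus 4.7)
I would proceed by induction on the nonnegative integer $\nv(P) - \nv(Q)$. For the base case $\nv(P) = \nv(Q)$, Proposition \ref{volume} gives $\nv(R) = |R \cap M| - n + 1$ for any $n$-dimensional Gorenstein polytope $R$ of degree $2$, so $|Q \cap M| = |P \cap M|$; combined with $Q \subseteq P$ (up to isomorphism), this forces $Q \cap M = P \cap M$, and taking convex hulls gives $Q = P$, making the identity trivial.

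For the inductive step, assume $\nv(P) > \nv(Q)$. Since every lattice polytope equals the convex hull of its vertices, some vertex $v$ of $P$ must lie outside $Q$. Set
\[ P_v := \conv((P \cap M) \setminus \{v\}), \]
so that $Q \subseteq P_v \subsetneq P$ and $\nv(P_v) = \nv(P) - 1$. Applying the monotonicity of the $h^*$-vector (Remark \ref{monotonyhvector}) to $Q \subseteq P_v \subseteq P$ sandwiches $h^*_2(P_v)$ between $h^*_2(Q) = 1$ and $h^*_2(P) = 1$, and bounds $h^*_i(P_v) \leq h^*_i(P) = 0$ for $i \geq 3$; hence $h^*(P_v) = (1, a_{P_v}, 1)$, and by Corollary \ref{h*dual} $P_v$ is Gorenstein of degree $2$. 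The induction hypothesis applied to the pair $Q \preccurlyeq P_v$ then reduces the corollary to the single-step identity $\nv(P_v^*) = \nv(P^*) + 1$.

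To establish this identity I invoke Proposition \ref{minim}: because $P_v$ is Gorenstein of degree $2$, the dual facet $\Gamma_v^* \subset P^*$ is a basic $(n-1)$-simplex, and the proof of \ref{minim} exhibits the cut-off simplex as a basic $n$-simplex $S_v = \conv(v, v + u_1, \ldots, v + u_n)$ with $u_1, \ldots, u_n$ a $\Z$-basis of $M$. Consequently, the ``new'' facet $F = \conv(v + u_1, \ldots, v + u_n)$ of $P_v$ replacing $v$ is itself a basic $(n-1)$-simplex. By the remark preceding the corollary, $P^* \subseteq P_v^*$, and the only additional vertex of $P_v^*$ is the lattice point $v^* \in N'$ dual to $F$. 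Applying Proposition \ref{Ci} to $P_v^*$ at $v^*$, the cone $C(v^*)$ is dual to the cone $C_F$; since $F$ is a basic simplex, $C_F$ is a basic cone, and hence so is $C(v^*)$. This forces the $n$ edge-vectors $w_i - v^*$ (where $w_1, \ldots, w_n$ are the vertices of $\Gamma_v^*$, which are exactly the vertices of $P_v^*$ adjacent to $v^*$) to form a $\Z$-basis of $N'$. Therefore the closure of $P_v^* \setminus P^*$ is the basic $n$-simplex $\conv(v^*, w_1, \ldots, w_n)$, which contributes exactly one new lattice point $v^*$ to $P_v^*$, giving $\nv(P_v^*) - \nv(P^*) = 1$ and closing the induction.

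The main obstacle is the last step, where one must carefully dualize the local geometry at the smooth vertex $v \in P$ to the local geometry at $v^* \in P_v^*$ via Proposition \ref{Ci}, and verify that the extra region in the dual is precisely a basic simplex (rather than merely a pyramid over $\Gamma_v^*$ that might contain additional lattice points). Everything else is routine bookkeeping with monotonicity and lattice point counts.
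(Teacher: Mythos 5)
Your proposal is correct and takes essentially the same route as the paper: induct by deleting one vertex $v$ of $P$ not in $Q$, use \ref{volume}/\ref{minim} to see that a unimodular corner simplex is cut off (so $\nv(P_v)=\nv(P)-1$), and verify that dually $P_v^*$ gains exactly the single lattice point $v^*$, so $\nv(P^*)=\nv(P_v^*)-1$. The only difference is presentational: where the paper simply asserts $P^*=\conv\bigl((P_v^*\cap N')\setminus\{v^*\}\bigr)$ ``as in the proof of \ref{minim}'', you justify the same one-point gain via \ref{Ci} and the unimodular corner cone at $v^*$ (your remaining assertions, e.g.\ that the vertices adjacent to $v^*$ are exactly those of $\Gamma_v^*$ and that the edge vectors $w_i-v^*$ are primitive because they pair to $1$ with $(v,1)$, are true and at the same level of detail as the paper's own argument).
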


\begin{proof}
We use the induction on $k = |P \cap M| - |Q \cap M|$. 
If $k =0$ then $P = Q$ and there is nothing to prove. 
If $k \geq 1$ then $P  \neq Q$ and 
there exists a vertex $v_i \in P$ which is not contained in 
$Q$. Let  $P_i:= {\rm conv} (\{ P \cap M \} \setminus \{v_i \})$. 
Then $Q \preccurlyeq P_i$. Moreover, the proof of \ref{minim} shows that 
$P_i$ is obtained 
from $P$ by cutting out a $n$-dimensional lattice basic simplex  
$S_i = {\rm conv}(\Gamma_i, v_i)$. 
Let $v_i^*$ be the dual vertex of $P_i$ corresponding to the facet 
$\Gamma_i \subset P_i$. Then 
 \[ P^*:= {\rm conv} (\{ P^*_i \cap N \} \setminus \{v_i^* \}). \]
As in the proof of 
\ref{minim},  we obtain 
\[ \nv(P^*) =  \nv(P^*_i) -1. \]
Together with  $\nv(P_i) =  \nv(P) -1$ this implies  
 \[ \nv(P) + \nv(P^*) =  \nv(P_i) + \nv(P^*_i). \]
Since  $Q \preccurlyeq P_i$ and $|P_i \cap M| - |Q \cap M| < 
|P \cap M| - |Q \cap M|$ we can apply the induction hypothesis 
and get 
\[ \nv(P_i) + \nv(P^*_i)  =  \nv(Q) + \nv(Q^*). \]
This proves the statement. 
\end{proof}

The main purpose of this section is to prove the following result:

\begin{theo}
Let $P$ be a minimal $n$-dimensional Gorenstein polytope of degree $2$.
Then $\nv(P) \leq 4$.
\label{theo-vol4}
\end{theo}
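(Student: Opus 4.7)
The plan is a three-step argument: reduce to the non-pyramid case, use Theorem \ref{deg1} to classify the facets of $P^{\ast}$, and then convert the resulting structural constraints into the asserted volume bound.

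\emph{Reduction to non-pyramids.} If $P = \Pi(P')$, Remark \ref{dual-pyramid} gives $P^{\ast} = \Pi(P'^{\ast})$, whose facets are $P'^{\ast}$ together with the pyramids $\Pi(\Gamma^{\ast})$ over facets $\Gamma^{\ast} \subset P'^{\ast}$. Since the pyramid construction preserves normalized volume, $\nv(P) = \nv(P')$, and Proposition \ref{minim} combined with the description of the facets of $\Pi(P'^{\ast})$ shows that $P$ is minimal if and only if $P'$ is minimal. An induction on $n$ thus reduces to the case where $P$ is not a pyramid; the base $n=2$ is handled by direct inspection of the sixteen reflexive polygons, whose minimal members (those whose dual has all edges of length $\geq 2$) indeed satisfy $\nv(P) \leq 4$.

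\emph{Structural classification.} Assume $P$ is not a pyramid. By Corollary \ref{pyr3}, every proper face of $P$ and of $P^{\ast}$ has degree at most one, so Theorem \ref{deg1} classifies each facet $\Gamma^{\ast} \subset P^{\ast}$ as either a basic simplex (with $\nv = 1$), an exceptional simplex (with $\nv = 4$), or a Lawrence prism with heights summing to $\nv(\Gamma^{\ast}) \geq 2$. Proposition \ref{minim} says that minimality of $P$ is equivalent to the condition that no facet of $P^{\ast}$ is a basic simplex, so $\nv(\Gamma^{\ast}) \geq 2$ for every facet of $P^{\ast}$. Summing with Proposition \ref{face-vol} gives
\[ (n-1)\,\nv(P^{\ast}) \;=\; \sum_{\Gamma^{\ast} \subset P^{\ast}} \nv(\Gamma^{\ast}) \;\geq\; 2\,v(P), \]
where $v(P) \geq n+1$ is the number of vertices of $P$.

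\emph{Deriving the bound.} To obtain $\nv(P) \leq 4$ I argue by contradiction: suppose $\nv(P) \geq 5$, equivalently $|P \cap M| \geq n+4$. I then classify each facet of $P$ itself by Theorem \ref{deg1}, and use Proposition \ref{Ci}, which identifies the vertex cone $C(v_i)$ as the dual of the cone over $\Gamma_i^{\ast} \subset P^{\ast}$, to locate a vertex $v_i$ of $P$ whose dual facet $\Gamma_i^{\ast}$ is a basic simplex, contradicting minimality. In dimension three this becomes transparent by combining Propositions \ref{12-3dim} and \ref{num10} with the bound above: one has $\nv(P^{\ast}) \geq v(P) \geq 4$ and $\nv(P) + \nv(P^{\ast}) \leq 10$, and the residual cases $v(P) \in \{4,5\}$ are eliminated by observing that facets of $P^{\ast}$ with few vertices are forced to be exceptional simplices, pushing $\nv(P^{\ast})$ high enough to yield $\nv(P) \leq 4$.

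\emph{Main obstacle.} The hardest part is a uniform treatment for $n \geq 4$. Here the Lawrence-prism facets of $P^{\ast}$ give $P$ itself a Cayley-polytope structure in the sense of Theorem \ref{CayleyGorenstein}, and one must iterate the descent, controlling how lattice points propagate through the induced Cayley decomposition. I expect a careful case analysis, distinguishing exceptional-simplex facets from Lawrence-prism facets and using Proposition \ref{face-vol} for $P$ and $P^{\ast}$ simultaneously, to show that the assumption $\nv(P) \geq 5$ always produces a unimodular corner at some vertex of $P$, i.e.\ a basic-simplex facet of $P^{\ast}$, contradicting minimality.
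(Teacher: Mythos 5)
Your proposal sets up the right ingredients (reduction to non-pyramids, Corollary \ref{pyr3}, Theorem \ref{deg1}, Proposition \ref{minim}, Proposition \ref{face-vol}), but the actual derivation of $\nv(P)\leq 4$ is missing: the step ``the assumption $\nv(P)\geq 5$ always produces a basic-simplex facet of $P^{\ast}$'' is exactly the content that has to be proved, and you leave it as an expectation (``I expect a careful case analysis\dots''). Note also that your inequality $(n-1)\nv(P^{\ast})\geq 2\,v(P)$ only bounds $\nv(P^{\ast})$ from below and says nothing about $\nv(P)$, so no contradiction with $\nv(P)\geq 5$ can come from it alone. Even your $3$-dimensional shortcut has a gap: a triangular facet of $P^{\ast}$ of degree $1$ need not be an exceptional simplex --- it can be a Lawrence prism with heights $(0,\theta)$, i.e.\ a triangle of normalized volume $2$ or $3$ --- so the claim that facets with few vertices are ``forced to be exceptional simplices'' is unjustified.

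The paper's mechanism, which your sketch lacks, works at a single vertex and uses minimality in the opposite direction. Fix a vertex $v_i\in P$; its dual facet $\Gamma_i^{\ast}\subset P^{\ast}$ has degree $1$ and is not basic, hence by Theorem \ref{deg1} it is an exceptional simplex or a Lawrence prism, and by Proposition \ref{Ci} this gives explicit generators of the cone $C(v_i)$. The convex hull $P_i'$ of $v_i$ and these few lattice points is itself an $n$-dimensional Gorenstein subpolytope of degree $2$ with $\nv(P_i')\leq 4$ (when $\Gamma_i^{\ast}$ is exceptional, or a prism with $\sum h_j\geq 3$), and \emph{minimality forces $P=P_i'$}, which is what bounds $\nv(P)$. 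In the remaining case (every facet of $P^{\ast}$ a Lawrence prism of volume $2$) one cuts $P$ by the hyperplane through the cone generators, bounds the number of vertices of $P$, and uses Proposition \ref{face-vol} to get $\nv(P^{\ast})\leq 4$. This only proves the dichotomy ``$\nv(P)\leq 4$ or $\nv(P^{\ast})\leq 4$'' (Theorem \ref{vol4'}); to finish, the paper needs Lemma \ref{min5}, which runs through the explicit classification \ref{class-aleq2} of Gorenstein polytopes with $\nv\leq 4$ to show that $\nv(P^{\ast})\leq 4$ forces either non-minimality of $P$ or $\nv(P)\leq 4$. Your proposal uses neither the ``$P=P_i'$ by minimality'' trick nor the classification \ref{class-aleq2}, and without some substitute for these two steps the argument does not close.
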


\begin{rem}
{\rm
If $P$ is a pyramid  $\Pi(P')$
over a $(n-1)$-dimensional Gorenstein polytope $P'$ of degree 2, then
$\nv(P) = \nv(P')$ and  minimality of $P$ is equivalent
to minimality of $P'$.
Therefore it is sufficient to prove \ref{theo-vol4}
for Gorenstein polytopes of degree $2$ which are not pyramids.}
\end{rem}

\begin{lem}
Let $P$ be a $n$-dimensional  Gorenstein polytope of degree $2$ and 
$P^*$ its dual. Assume that $$\nv(P^*) \leq 4.$$ Then either 
$P$ is not minimal, or
$\nv(P) \leq 4$.
\label{min5}
\end{lem}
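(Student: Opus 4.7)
The plan is to reduce Lemma \ref{min5} to a finite, explicit case check using the classification already established in Theorem~\ref{class-aleq2}. First, by Remark~\ref{dual-pyramid} together with the remark following Theorem~\ref{theo-vol4}, I may assume that $P$ (and hence $P^*$) is not a pyramid, since pyramidal cases reduce to a strictly lower-dimensional instance of the same statement. The hypothesis $\nv(P^*)\leq 4$ then forces the $h^*$-vector of $P^*$ to be $(1,a,1)$ with $a\in\{0,1,2\}$, so Theorem~\ref{class-aleq2} identifies $P^*$, up to isomorphism, with one of the fifteen polytopes $\Delta_1,\ldots,\Delta_4,\ P_1,\ldots,P_7,\ Q_1,Q_2,Q_3,R_1$.

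Next, I suppose $P$ is minimal (otherwise the first alternative of the lemma is immediate). By Proposition~\ref{minim} this is equivalent to the statement that every facet $\Gamma^*\subset P^*$ satisfies $\nv(\Gamma^*)\geq 2$. For each candidate $P^*$ on the finite list, I would determine its facet structure from the explicit description underlying the binomial relations for $S_{P^*}$ given in Theorem~\ref{class-aleq2}, and then compute the normalized volumes of those facets. Whenever at least one facet is a basic simplex ($\nv=1$), the candidate is incompatible with the minimality of $P$ and is discarded.

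For the (few) remaining candidates, the quantity $\nv(P)$ must be computed and shown to be $\leq 4$. I would use two complementary tools: the cone duality of Remarks~\ref{refl-cone-dual} and \ref{duality-Gor} gives a direct description of $P$ from $P^*$, while Corollary~\ref{sum-vol} gives the shortcut that for any minimal Gorenstein polytope $Q$ of degree $2$ with $Q\preccurlyeq P^*$,
\[
\nv(P) \;=\; \nv(Q)+\nv(Q^*)-\nv(P^*).
\]
Since $\nv(Q)\leq \nv(P^*)\leq 4$, the polytope $Q$ is itself on the classified list, and if its dual $Q^*$ is already known, then $\nv(P)$ is read off without dualising $P^*$ from scratch.

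The main obstacle I anticipate is this last step: although only a small number of candidates $P^*$ survive the facet test, for each one the explicit determination of $\nv(P)$ has to be carried out, and one must verify that no surviving candidate produces a $P$ with $\nv(P)>4$. The higher-dimensional examples $Q_2,Q_3,R_1$ are the most delicate, since they have more facets and admit more chains $Q\preccurlyeq P^*$, so pinning down an appropriate minimal $Q$ and its dual requires the most care; everything else is a short combinatorial verification.
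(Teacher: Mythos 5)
Your plan is essentially the paper's own proof: reduce to the non-pyramid case, identify $P^*$ via the classification in Theorem~\ref{class-aleq2} from $\nv(P^*)\leq 4$, and apply the facet criterion of Proposition~\ref{minim} to see that every candidate either has a basic-simplex facet (so $P$ is not minimal) or has dual of volume at most $4$. Your additional use of Corollary~\ref{sum-vol} to shortcut the computation of $\nv(P)$ is a harmless variation on the same finite case check, so the proposal is correct and matches the paper's argument.
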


\begin{proof} 
We can apply the classification of 
all $n$-dimensional Gorenstein polytopes $P^*$ of degree $2$ satisfying the 
condition $\nv(P^*) \leq 4$ (see \ref{class-aleq2}). 

If $\nv(P^*) \in \{ 2,3 \}$ a direct check via this classification
shows that
all facets of $P^*$ are standard simplices. Thus, by
\ref{minim},  we obtain that $P = (P^*)^*$ is not not minimal.

 If $\nv(P^*) =4$, then $P^*$ may contain facets which are not
standard simplices, but if $\nv(P)  \geq 5$ 
there always exists at least one facet
which is a standard simplex. By \ref{minim}, this implies that $P$ 
is not minimal. 
\end{proof}

By Lemma \ref{min5}, the statement of   Theorem \ref{theo-vol4} follows from:

\begin{theo}
Let $P$ be a minimal $n$-dimensional Gorenstein polytope of degree $2$.
Then at least one of two inequalities  $\nv(P) \leq 4$ and
$\nv(P^*) \leq 4$ holds.
\label{vol4'}
\end{theo}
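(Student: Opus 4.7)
The plan is to argue by contradiction. Suppose that $P$ is minimal with $\nv(P) \geq 5$ and $\nv(P^*) \geq 5$. By the remark preceding the theorem I may assume $P$ is not a pyramid, since for a pyramid $\Pi(P')$ both the minimality and the normalized volume coincide with those of the base $P'$, so an induction on dimension reduces to this case. Once $P$ is not a pyramid, Corollary \ref{pyr3} forces every proper face of $P$ and of $P^*$ to have degree at most $1$, so by Theorem \ref{deg1} each such face is a basic simplex, the exceptional simplex, or a Lawrence prism whose heights satisfy $\theta_1 + \cdots + \theta_{n-1} \geq 2$.

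Next I would combine minimality with this face classification. By Proposition \ref{minim}, minimality of $P$ means every facet $\Gamma^* \subset P^*$ has $\nv(\Gamma^*) \geq 2$; in particular no facet of $P^*$ is a basic simplex, so every facet of $P^*$ is exceptional (volume $4$) or a nontrivial Lawrence prism. Proposition \ref{face-vol} applied to $P^*$ gives that its facet volumes sum to $(n-1)\nv(P^*) \geq 5(n-1)$, and dually the facets of $P$ sum to $(n-1)\nv(P) \geq 5(n-1)$; in particular the number of vertices of $P$ is bounded above by $(n-1)\nv(P^*)/2$. Proposition \ref{Ci} then ties the two sides together by identifying the cone $C(v)$ at a vertex $v \in P$ with the dual of the cone $C_{\Gamma^*_v}$ over the corresponding facet of $P^*$, so the local geometry at each vertex of $P$ is rigidly prescribed once the Lawrence/exceptional type of $\Gamma^*_v$ is fixed.

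For $n = 3$ the argument closes quickly: Proposition \ref{num10} already yields $\nv(P) + \nv(P^*) \leq 10$, so both must equal $5$, and then Proposition \ref{propint} forces $\mathrm{Int}(E) \cdot \mathrm{Int}(E^*) = 0$ on every edge. Combined with the minimality restriction that every $2$-dimensional facet of $P^*$ be exceptional or a nontrivial Lawrence polygon, only a finite check remains to derive the contradiction. For $n \geq 4$ I would leverage Theorem \ref{CayleyGorenstein}: the rigidity supplied by Proposition \ref{Ci}, together with the Lawrence/exceptional constraint on every facet of $P^*$, should force $P$ itself to be a Cayley polytope associated to a Minkowski decomposition of a reflexive polygon, after which an inspection of the Emiris--Tsigaridas tables (Tables \ref{minkdec1} and \ref{minkdec2}) rules out both $\nv(P), \nv(P^*) \geq 5$. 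The main obstacle is precisely this last passage: translating the purely local cone-duality data at each vertex of $P$ into a coherent global Cayley description, because the various Lawrence prisms with $\sum \theta_i \geq 2$ admit many distinct height patterns and one must verify compatibility of all facets of $P^*$ simultaneously. The invariance $\nv(P) + \nv(P^*) = \nv(Q) + \nv(Q^*)$ under the partial order $\preccurlyeq$ from Corollary \ref{sum-vol} is the natural bookkeeping device for this step, since it lets one compare the minimal $P$ with any convenient enlargement along a chain.
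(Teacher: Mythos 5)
Your proposal assembles the right ingredients (\ref{pyr3}, \ref{deg1}, \ref{minim}, \ref{Ci}, \ref{face-vol}, \ref{num10}, \ref{propint}) but stops short of the step that actually makes the paper's proof work, and the route you sketch for $n\geq 4$ is logically circular. The paper's argument is local at a single vertex: fix $v_i\in P$ with dual facet $\Gamma_i^*\subset P^*$; minimality plus \ref{pyr3} force $\deg\Gamma_i^*=1$, so by \ref{deg1} it is an exceptional simplex or a Lawrence prism. Then \ref{Ci} gives explicit coordinates for the generators of the cone $C(v_i)$, and the decisive move is to take $P_i'$, the convex hull of $v_i$ and these generators (which are lattice points of $P$), check directly that $P_i'$ is already a Gorenstein polytope of degree $2$ with $\nv(P_i')\leq 4$ (in the exceptional and height-sum $\geq 3$ cases), and invoke minimality to conclude $P=P_i'$. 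Your proposal never uses minimality of the polytope itself in this way --- you use it only through the facet condition $\nv(\Gamma^*)\geq 2$ --- and without the ``minimality forces $P$ to equal an explicit small subpolytope'' step, the volume bound does not follow from the global counting you set up. The remaining case, where every facet of $P^*$ is a Lawrence prism of volume exactly $2$, needs a separate argument (splitting $P$ along the hyperplane through the cone generators, using minimality to force the residual piece to have degree $1$, bounding the number of vertices of $P$ and feeding this into \ref{face-vol} applied to $P^*$), which is also absent from your sketch.

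Concretely, your $n\geq 4$ strategy --- ``force $P$ to be a Cayley polytope and inspect the Emiris--Tsigaridas tables'' --- cannot be used here, because in this paper the Cayley description (\ref{cay}) is deduced \emph{from} Theorem \ref{theo-vol4} (equivalently \ref{vol4'}) together with the classification \ref{class-aleq2}; you would be assuming a consequence of the statement you are proving, unless you supply an independent proof that minimal non-pyramids of degree $2$ are Cayley polytopes, which is essentially the whole difficulty. For $n=3$ your reduction via \ref{num10} and \ref{propint} to $\nv(P)=\nv(P^*)=5$ with all edge products $\mathrm{Int}(E)\cdot\mathrm{Int}(E^*)=0$ is correct as far as it goes, but ``a finite check remains'' is not carried out; the paper disposes of this case by noting that both $P$ and $P^*$ would have five vertices, hence be combinatorially pyramids over a quadrilateral $S$ with $\nv(S)=2$, so $2\mid\nv(P)=5$, a contradiction. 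So the proposal has a genuine gap at its core step rather than being a complete alternative proof.
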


We prove Theorem \ref{vol4'} in several steps:

Let $P$ be a minimal $n$-dimensional Gorenstein polytope of degree
$2$. We fix a vertex $v_i \in P$ and the facet $\Gamma_i^* \subset P^*$
which is dual to $v_i$. Without loss of generality we assume that $v_i
=0$, i.e., $C(v_i) \supset P$ is a cone with vertex $0$.

Since $P^*$ is not a pyramid, by Corollary  \ref{pyr3}, 
we obtain that $\Gamma_i^*$ is a $(n-1)$-dimensional
Gorenstein polytope of degree $\leq 1$. By \ref{minim}, 
the facet $\Gamma^*_i$ can not have  
degree $0$, i.e., $\Gamma^*_i$ is not a $(n-1)$-dimensional 
 basic lattice simplex. 
Therefore, we have $\deg \Gamma_i^* =1$. 
By the complete
classification of lattice polytopes of degree $1$ (see \ref{deg1}), 
$\Gamma_i^*$ is
either an $(n-1)$-dimensional exceptional simplex, or a Lawrence prism
with heights $h_1, \ldots, h_{n-1}$ such that $\nv(\Gamma_i^*) =
\sum_{j=1}^{n-1} h_j \geq 2$.

\begin{lem}
If $\Gamma_i^*$ is  a $(n-1)$-dimensional
exceptional simplex, then $\nv(P) = 2$.
\end{lem}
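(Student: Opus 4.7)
My plan is to extract from the exceptional structure of $\Gamma_i^*$ an explicit $n$-simplex $T \preccurlyeq P$ of normalised volume $2$ and then use the minimality of $P$ to identify $P$ with $T$.

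First I would fix lattice coordinates on $N$ so that $\Gamma_i^*$ takes the standard form
\[ \Gamma_i^* = \conv\{e_n,\ 2e_1 + e_n,\ 2e_2 + e_n,\ e_3 + e_n,\ \ldots,\ e_{n-1} + e_n\}, \]
sitting at affine height $y_n = 1$. By Proposition~\ref{Ci}, the tangent cone $C(v_i) \subset M_{\R}$ is the dual of $C_{\Gamma_i^*}$, and a direct computation of the inward facet normals of $C_{\Gamma_i^*}$ identifies the primitive ray generators of $C(v_i)$ explicitly as $u_j = e_j^*$ for $1 \leq j \leq n-1$, together with
\[ u_0 \;=\; -e_1^* - e_2^* - 2 \sum_{j=3}^{n-1} e_j^* + 2 e_n^*. \]

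Since $v_i = 0$ is a vertex of $P$, each edge of $P$ out of $v_i$ lies along one of these rays and therefore contains the primitive lattice vector $u_j$, so the $n$-simplex $T := \conv\{0, u_0, u_1, \ldots, u_{n-1}\}$ is a lattice subpolytope of $P$. A cofactor expansion along the last row of the matrix $[u_0 \mid u_1 \mid \cdots \mid u_{n-1}]$ gives $\nv(T) = 2$, and a short barycentric analysis (the last coordinate of a generic point is $2\lambda_1$, forcing $\lambda_1 \in \{0,\tfrac{1}{2},1\}$, and the case $\lambda_1 = \tfrac{1}{2}$ is killed by positivity of the remaining barycentric coordinates) shows $T \cap M = \{0, u_0, u_1, \ldots, u_{n-1}\}$. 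Then Proposition~\ref{latt} applied to $T \subseteq P$ converts the equality $\nv(T) = 2 = (n+1) - n + 1 = |T \cap M| - n + 1$ into the assertion that $T$ is itself an $n$-dimensional Gorenstein polytope of degree~$2$, so $T \preccurlyeq P$.

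The final step is to conclude $P = T$, whence $\nv(P) = \nv(T) = 2$. By Corollary~\ref{sum-vol}, $\nv(P) + \nv(P^*) = 2 + \nv(T^*)$, and the reversed inclusion $P^* \preccurlyeq T^*$ combined with Remark~\ref{monotonyhvector} gives $\nv(P^*) \leq \nv(T^*)$, hence $\nv(P) \geq 2$. The main technical obstacle will be the reverse inequality $\nv(P) \leq 2$, for which I would argue that any vertex of $P$ lying outside the $n+1$ vertices of $T$ could be removed without destroying the Gorenstein-degree-$2$ condition (using Proposition~\ref{minim} together with the explicit shape of $C(v_i)$), contradicting the minimality of $P$. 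An easy computation shows that in dimensions $n \geq 4$ the simplex $T$ itself is already a lattice pyramid with apex $u_3$ over the hyperplane $\{x_3 + x_n = 0\}$; the non-pyramid hypothesis on $P$ then forces those cases to be vacuous, leaving $n = 3$ as the only dimension in which $T$ actually realises a minimal non-pyramid example.
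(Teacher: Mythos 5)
Your argument is correct and is essentially the paper's own proof: in both cases one dualizes the cone over the exceptional simplex to write $C(v_i)$ explicitly, observes that the origin together with the $n$ primitive ray generators spans a lattice simplex $T \subseteq P$ with $h^*_T(t) = 1 + t^2$ (so $\nv(T)=2$), and invokes minimality to force $P = T$. The only cosmetic difference is in how the last step is justified: the cleanest route is monotonicity of $h^*$ (Remark~\ref{monotonyhvector} together with Corollary~\ref{h*dual}) rather than Proposition~\ref{minim} --- removing any vertex of $P$ outside $T$ leaves a polytope squeezed between $T$ and $P$, hence still Gorenstein of degree $2$, contradicting minimality --- and your closing observation that $T$ is itself a lattice pyramid for $n\geq 4$ is a correct bonus not present in the paper.
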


\begin{proof}
Using \ref{Ci}, one obtains that the cone $C(v_i) \supset P$
is isomorphic to the simplicial cone generated by
$2e_1 -e_2-e_3, e_2, e_3, \ldots, e_n$, where $e_1, \ldots, e_n$ is a
basis of $M$. Then $n+1$ lattice points $0, 2e_1 -e_2-e_3, e_2, \ldots,
e_n$ are contained in $P$ and their convex hull $P'_i$ is already a simplex 
whose  $h^*$-polynomial is $1 + t^2$. By minimality, we have
$P = P'_i$. Therefore $\nv(P) = \nv(P'_i) =2$.
\end{proof}

\begin{lem} 
If $\Gamma_i^*$ is a  $(n-1)$-dimensional
Lawrence prism with heights $h_1, \ldots, h_{n-1}$ such that
$\nv(\Gamma^*_i) = \sum_i h_i \geq 3$, then $\nv(P) \in \{ 3,4 \}$.
\end{lem}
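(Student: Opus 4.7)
Plan: I follow the same strategy as in the exceptional-simplex case. Using Proposition \ref{Ci} I describe $C(v_i)$ as the cone dual to $C_{\Gamma_i^*}$, exhibit an $n$-dimensional Gorenstein subpolytope $P'_i \subseteq P$ of degree $2$, and conclude $P = P'_i$ by the minimality of $P$.

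Fix a lattice basis $f_1, \dots, f_{n-1}$ in which $\Gamma_i^*$ has vertices $f_{j-1}$ and $f_{j-1} + h_j f_{n-1}$ (with $f_0 := 0$). Then $C_{\Gamma_i^*}$ is generated by the $2(n-1)$ lifted lattice vectors $(f_{j-1}, 1)$ and $(f_{j-1} + h_j f_{n-1}, 1)$, and dualizing gives an explicit list of $2(n-1)$ defining inequalities for $C(v_i)$. Its $n+1$ extreme rays (one per facet of $\Gamma_i^*$) have primitive lattice generators $w_1, \dots, w_{n+1}$, which can be written down in closed form in terms of $h_1, \dots, h_{n-1}$. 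Since every edge of $P$ through $v_i$ runs along one of these rays, each $w_\alpha$ lies in $P$, and we set $P'_i := \conv\{v_i, w_1, \dots, w_{n+1}\} \subseteq P$.

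The decisive step is to verify that $(n-1)P'_i$ is reflexive precisely when $s := \sum_j h_j \in \{3, 4\}$, in which case $\nv(P'_i) = s$. The $2(n-1)$ facets of $(n-1)P'_i$ passing through $(n-1)v_i$ come from the defining inequalities of $C(v_i)$, whose primitive normals all have last coordinate equal to $1$, so they automatically sit at lattice distance one from the natural candidate for the interior lattice point (the unit vector along the last coordinate direction). The constraint therefore comes from the remaining facets of $(n-1)P'_i$, those opposite $v_i$: computing their primitive normals and reducing, one finds that the relevant lattice distances take the form $(s-2)/d$, where $d$ involves greatest common divisors of expressions in the $h_j$. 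Requiring all such distances to equal one forces $s \in \{3, 4\}$ together with integrality conditions on the heights that select the admissible Lawrence configurations; in each admissible case a direct count gives $\nv(P'_i) = s$, and by minimality of $P$ together with Proposition \ref{volume} we conclude $P = P'_i$, hence $\nv(P) = s \in \{3, 4\}$.

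The main obstacle is the non-simplex case, where $\Gamma_i^*$ has several positive heights, $C(v_i)$ is non-simplicial, and $P'_i$ is not a simplex (for example a triangular bipyramid in dimension three), so that several opposite facets must be analyzed simultaneously. For configurations not satisfying the reflexivity condition (namely $s \geq 5$, or certain non-simplex $s = 4$ cases), the failure is strict: $(n-1)P'_i$ already contains at least two interior lattice points, and since $(n-1)P'_i \subseteq (n-1)P$ these lattice points would also lie in the interior of the reflexive polytope $(n-1)P$, which is impossible. Hence such configurations cannot occur, and the lemma follows.
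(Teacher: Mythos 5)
Your setup coincides with the paper's first step: via Proposition \ref{Ci} you identify $C(v_i)$ as the dual of the cone over the Lawrence prism, take the primitive generators of its rays (which lie in $P$ because edges of $P$ at $v_i$ have lattice length at least one), form $P'_i\subseteq P$, and hope to finish by minimality. But the two assertions that carry the entire weight of the lemma are left unproved, and one of them is false as stated. You claim that $(n-1)P'_i$ is reflexive precisely when $s=\sum_j h_j\in\{3,4\}$. Take $n=3$ and heights $(3,1)$: then $P'_i=\conv\{0,e_1,e_2,e_3,(3,1,-1)\}$ has $\nv(P'_i)=4$ but only $n+2=5$ lattice points, so $h^*_1(P'_i)=1$ while $h^*_2+h^*_3=2$; the $h^*$-vector cannot be the palindromic $(1,2,1)$, so $P'_i$ is not Gorenstein of degree $2$ and $2P'_i$ is not reflexive, although $s=4$. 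You hedge this with unspecified ``integrality conditions on the heights'', but then the inadmissible $s=4$ configurations, as well as every configuration with $s\geq 5$, must be excluded by your second assertion: that $(n-1)P'_i$ then contains at least two interior lattice points. That assertion is exactly the content of the lemma (it is what bounds $s$ and rules out the bad height vectors), it concerns an infinite family of polytopes depending on $n$ and on arbitrary heights, and ``the failure is strict'' is not an argument; likewise the facet-distance formula $(s-2)/d$ is neither derived nor obviously correct, and the claimed identification of the interior lattice point is not checked. (The interior-point claim does look true: since the number of interior lattice points of $(n-1)P'_i$ is at least $\sum_{j\geq 2}h^*_j(P'_i)=\nv(P'_i)-1-h^*_1(P'_i)=s+n-|P'_i\cap M|$, it would suffice to bound the lattice points of the middle slice $\conv\{0,e_1,\dots,e_{n-1},\tfrac12\sum_j h_je_j\}$; but no such bound appears in your text. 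Similarly, the reflexivity check in the admissible cases is in principle a finite verification, because appending zero heights only produces pyramids, but you neither note this reduction nor carry out the check.)

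For comparison, the paper needs no facet or normal computations at all. Since $P$ is not a pyramid, Corollary \ref{pyr3} forces every lattice point of $P$, hence of $P'_i$, to lie on an edge; because $P'_i$ meets the hyperplanes $z_n=\pm1$ only in the two vertices $e_n$ and $q=-e_n+\sum_j h_je_j$, the only possible lattice point besides $0,e_1,\dots,e_n,q$ is $(e_n+q)/2$, so $|P'_i\cap M|\leq n+3$. Proposition \ref{latt} then gives $\nv(P'_i)\leq|P'_i\cap M|-n+1\leq 4$, hence $s\in\{3,4\}$, and the same proposition shows that in the surviving cases $P'_i$ is Gorenstein of degree $2$, so minimality yields $P=P'_i$. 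Note that you never invoke the non-pyramid hypothesis anywhere; in the paper it is indispensable for the lattice-point count, and your intended substitute for it is precisely the unproven interior-point claim. As written, the bound $s\leq 4$ is not established, so the proof has a genuine gap.
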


\begin{proof}
Using \ref{Ci}, one obtains that the cone $C(v_i) \supset P$
is isomorphic to the cone generated by $n+1$ vectors
\[  e_1, \ldots, e_n, -e_n + \sum_{i=1}^{n-1} h_i e_i, \]
where
$e_1, \ldots, e_n$ is a basis of $M$.
We denote by $P'_i$ the convex hull of $0$ and $n+1$ lattice points
$e_1, \ldots, e_n, -e_n + \sum_{i=1}^{n-1} h_i e_i$.  Then a direct
calculation shows that $\nv(P'_i) = \sum_{i =1}^{n-1} h_i$.
Since $P'_i$ lies between two parallel hyperplanes $z_n = \pm 1$ and
the intersections of $P'_i$ with these hypeplanes are exactly two vertices
$p:= e_n$ and  $q:= -e_n + \sum_{i=1}^{n-1} h_i e_i$, all remaining
lattice points of $P'_i$ are contained in the hyperplane $\{z_n =0\}$.
We want to show that the set $M \cap P' \cap \{ z_n =0 \}$ is either
$\{ 0, e_1, \ldots e_{n-1} \}$, or it possibly has one more lattice point
$(p + q)/2$ (if  $(p+q)/2 \in M$). Indeed, by \ref{pyr3} all 
lattice points of $P$
are contained in $1$-dimensional faces of $P$. Since $P_i' \subset P$, 
the same is true also for $P_i'$, i.e. all lattice points of $P_i'$ are
contained in $1$-dimensional faces of $P_i'$. But $P_i'$ is a convex
hull of $n+2$ lattice points and all segments connecting two of
them (except maybe $[p,q]$) do not have interior lattice points, because 
the convex hull of  $\{ 0, e_1, \ldots e_{n-1} \}$ is a basic
$(n-1)$-dimensional simplex in the hyperplane $\{ z_n = 0\}$ and 
the $z_n$-coordinates of $p$ and $q$ are $1$ and $-1$ respectively.  
We note that  $\{ z_n =0 \} \cap [p,q] = (p + q)/2 =
(\sum_{i =1}^{n-1} h_i e_i)/2$. Thus, we obtain 
$|P'_i \cap M| \leq n+3$.  By \ref{latt}, the latter implies 
$\nv(P'_i) \leq | P'_i \cap M| - n +1 \leq 4$, i.e.
$ \sum_{i =1}^{n-1} h_i \in \{3,4\}$. If  $\sum_{i =1}^{n-1} h_i =3$, then 
$(p+q)/2 \not\in  M$ can not be a lattice point of $P'_i$ and 
$|P'_i \cap M| = n+2$.
Together with $\nv(P'_i) =3$ this implies that $P'_i$ is a Gorenstein 
polytope with $h^*$-polynomial $1 + t + t^2$.
If  $\sum_{i =1}^{n-1} h_i =4$, then by \ref{latt}, $P'_i$ must be
a Gorenstein polytope of degree $2$, i.e.,  $|P'_i \cap M| = n+3$, 
$(p+q)/2$ is a lattice point of $P'_i$, and $\nv(P_i') =4$. By minimality,
 we have
in both cases $P = P'_i$, i.e. $\nv(P) \in \{3,4 \}$.  
\end{proof}

The following statemnet 
 completes the proof of Theorem \ref{vol4'} and Theorem \ref{theo-vol4}:

\begin{lem}
Assume that  $P$ is a minimal Gorenstein
polytope of degree $2$ such that every codimension-$1$ face $\Gamma_i^* 
\subset P^*$  is a Lawrence prism with $\nv(\Gamma_i^*) =2$.
Then either $\nv(P) \leq 4$, or  $\nv(P^*) \leq 4$. 
\end{lem}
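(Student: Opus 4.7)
The plan is to follow the pattern of the two preceding lemmas: fix a vertex $v_i = 0$ of $P$ and invoke Proposition~\ref{Ci} to identify $C(v_i)$ with the dual of $C_{\Gamma_i^*}$. Since $\Gamma_i^*$ is a Lawrence prism with heights summing to $2$, exactly two configurations can occur up to permutation: (A) one height equals $2$ and the others vanish, or (B) two heights equal $1$ and the others vanish. By the same formula used in the preceding lemma, the generator set of $C(v_i)$ in a suitable basis $e_1,\dots,e_n$ of $M$ is $\{e_1,\dots,e_n,-e_n+\sum_j\theta_je_j\}$. In case (A) the cone is simplicial with extremal rays $\{e_2,\dots,e_n,2e_1-e_n\}$ and $e_1$ redundant (since $e_1=\tfrac12 e_n+\tfrac12(2e_1-e_n)$); in case (B) all $n+1$ vectors $\{e_1,\dots,e_n,e_1+e_2-e_n\}$ are extremal.

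Set $P_i' := \conv\{0,e_1,\dots,e_n,-e_n+\sum_j\theta_je_j\}$. A direct calculation analogous to the one in the preceding lemma gives $\nv(P_i')=2$ in both cases, and $|P_i'\cap M| = n+2$: in case (A) the extra lattice point beyond the $n+1$ vertices is the midpoint $e_1$ of $[e_n,2e_1-e_n]$, while in case (B) the $n+2$ vertices account for all lattice points. Hence the $h^*$-polynomial of $P_i'$ is $1+t$, so by Theorem~\ref{deg1} the polytope $P_i'$ is a Lawrence prism of degree $1$, and by \ref{latt}(2) it is not Gorenstein of degree $2$. Since $P\supseteq P_i'$ and $P$ itself is Gorenstein of degree $2$, we conclude $P\supsetneq P_i'$ and $|P\cap M|\geq n+3$. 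If in addition $|P\cap M|\leq n+5$, i.e.\ $\nv(P)\leq 4$, the lemma is proved.

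So the remaining regime is $\nv(P)\geq 5$, in which $P$ contains at least two further lattice points in $C(v_i)\setminus P_i'$, each of which must lie on an edge of $P$ by Corollary~\ref{pyr3}. The same dichotomy (A)/(B) applies at every vertex $v_j$ of $P$, and Proposition~\ref{face-vol} yields $2f=\sum_j\nv(\Gamma_j^*)=(n-1)\nv(P^*)$, where $f$ is the number of vertices of $P$; so it suffices to bound $f$ from above. The main obstacle is precisely this bound. My plan is to combine the local description of each $C(v_j)$ with the global convexity of $P$ and the minimality assumption to argue that, in this remaining regime, $P$ is forced to arise as a Cayley polytope $\Delta_1\ast\cdots\ast\Delta_r$ of a Minkowski decomposition of a $2$-dimensional reflexive polytope $\Delta$ via Theorem~\ref{CayleyGorenstein}, whose summands are unit segments with at most one segment of length $2$. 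An inspection of the Emiris--Tsigaridas tables~\ref{minkdec1}--\ref{minkdec2} then reduces the situation to a finite and small list of configurations, in each of which $\nv(P^*)\leq 4$ can be verified by direct computation.
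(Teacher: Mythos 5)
Your opening matches the paper's setup: at a vertex $v_i$ the dual facet is a Lawrence prism with heights summing to $2$, the cone $C(v_i)$ has the generators you describe, the subpolytope $P_i'$ has $\nv(P_i')=2$ and degree $1$, and the reduction via \ref{face-vol} to bounding the number of vertices of $P$ is exactly the right target. (Small slip: $\nv(P)\leq 4$ is equivalent to $|P\cap M|\leq n+3$, not $n+5$.) But the proof stops where the real work begins: you yourself call the vertex bound ``the main obstacle'' and replace it by a plan, namely that in the regime $\nv(P)\geq 5$ the polytope $P$ ``is forced'' to be a Cayley polytope of unit segments over a reflexive polygon, after which the Emiris--Tsigaridas tables would finish the job. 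That forcing claim is never argued, and in the paper's logical architecture it cannot simply be quoted: the statement that every Gorenstein polytope of degree $2$ (other than $2S_3$) is such a Cayley polytope is Proposition \ref{cay}, which is deduced \emph{from} Theorem \ref{theo-vol4}, i.e.\ from the lemma you are trying to prove. As written, the route is either circular or a conjecture.

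The idea you are missing is how minimality is exploited globally. Since the heights sum to $2$, all generators of $C(v_i)$ lie on a single affine hyperplane $H_i$, which splits $P$ into the piece $P_i'$ containing $v_i$ (a pyramid over the Lawrence prism $H_i\cap P$, with $\nv(P_i')=2$) and $P_i:=\conv\bigl((P\cap M)\setminus\{v_i\}\bigr)$. Minimality says $P_i$ is not Gorenstein of degree $2$, so by \ref{latt} it has degree $\leq 1$, and Theorem \ref{deg1} then pins it down: $P_i$ is an exceptional simplex or a Lawrence prism. In either case one counts vertices directly ($P_i$ a simplex gives $k\leq n+2$; $P_i$ a Lawrence prism, whose facet $H_i\cap P$ already has $\leq n+1$ vertices, gives $k\leq n+4$), so \ref{face-vol} yields $(n-1)\nv(P^*)=2k\leq 2(n+4)$, hence $\nv(P^*)\leq 4$ for all $n\geq 5$; the cases $n=4$ (a divisibility-by-$3$ count forcing $k=6$) and $n=3$ (via \ref{num10} plus excluding $\nv(P)=\nv(P^*)=5$ by a parity argument on the two combinatorial types of $3$-polytopes with five vertices) are finished by short ad hoc arguments. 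No Cayley-polytope input is needed, and the argument stays entirely within the minimality/degree-$\leq 1$ classification framework already available at this point of the paper.
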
 

\begin{proof} 
In this situation, all generators of the cone $C(v_i)$ are contained 
in an affine hyperplane $H_i$ which divides $P$ into two lattice
polytopes $P_i =  {\rm conv} (\{ P \cap M \} \setminus \{v_i \})$ 
and a Lawrence prism $P'_i$ containing the vertex $v_i$ 
such that $\nv(P_i') =2$. Moreover, 
$P_i'$ is a pyramid with vertex $v_i$ 
over the $(n-1)$-dimensional Laurence prism $\overline{P_i} = H_i \cap P$. 

By minimality,  $P_i$ must have degree $1$ and by \ref{deg1} 
we have the following two cases:

{\sc Case 1}:
The polytope  $P_i$ is an exceptional simplex, i.e., $\nv(P_i) =4$.  
Then its facet  $\overline{P_i}$ 
is also a simplex. Moreover, $P'_i$ must be also a simplex, since 
$P_i'$  is a pyramid over  
$\overline{P_i}$. Let $k$ be the number of vertices 
of $P$ (this is also the number of facets in $P^*$). 
The whole polytope $P$ is 
a union of simplices $P_i$ and $P_i'$.  Therefore  
$k \leq n+2$. By \ref{face-vol}, we have
\[ (n-1) \nv (P^*) = \sum_{ i =1}^{k} \nv(\Gamma_i^*) \leq  2(n+2). \]
If $n \geq  4$, then we obtain $\nv(P^*) \leq 2 + 6/(n-1) \leq 4$.
If $n =3$, then $\nv(P) = \nv(P_i') + \nv(P_i) = 2+4 =6$. By \ref{num10},
we have $\nv(P^*) \leq 4$.

{\sc Case 2}: 
The polytope  $P_i$ is a Lawrence prism. Since $P_i'$ and $\overline{P_i}$ 
are both Lawrence prisms and $\nv(P_i') = \nv(\overline{P_i}) =2$ the number
of vertices of $\overline{P_i}$ is at most $n+1$. On the other hand, 
 $\overline{P_i}$ is a facet of  $P_i$ which is not a basic
 $(n-1)$-dimensional simplex. Therefore  $P_i$ contains at most $2$ more 
vertices than in   $\overline{P_i}$. Together with $v_i \in P_i'$ we see 
that $P = P_i' \cup P_i$ contains  $k \leq n+4$ vertices.
By \ref{face-vol}, we have
\[ (n-1) \nv (P^*) = \sum_{ i =1}^{k} \nv(\Gamma_i^*) \leq  2(n+4). \]

If  $n \geq  5$, then we obtain $\nv(P^*) \leq 2 + 10/(n-1) < 5$, i.e.
$\nv(P^*) \leq 4$.

If $n =4$, then $ (n-1) \nv (P^*) = \sum_{ i =1}^{k} \nv(\Gamma_i^*)$
together with $\nv(\Gamma_i^*) =2$ implies that the number $k$ of vertices of
$P$ must be divisible by $3$. Since  $k \leq n+4 =8$, we obtain 
 $k =6$ and  $\nv (P^*) =4$.

If $n =3$, then it remains to exclude only the case
$\nv(P) = \nv(P^*)=5$ (all other cases follow from \ref{num10}).
By \ref{face-vol}, in the last case both polytopes $P$ and $P^*$ must have
$5$ vertices. There exist exactly two  different combinatorial  types of 
$3$-dimensional polytopes $P$ with $5$ vertices, but only for one of these 
types the dual polytope $P^*$ has also $5$ vertices. 
Therefore, both polytopes $P$ and $P^*$ must be
combinatorially equivalent to a pyramid over a lattice
$4$-gon $S$ such that $\nv(S) =2$. This implies that  $\nv(P) =
\nv(P^*) =5$  is divisible by $\nv(S) =2$. Contradiction.
\end{proof}

\begin{prop} 
Every $n$-dimensional Gorenstein polytope $P \neq 2S_3$ 
of degree $2$ is a 
Cayley polytope $P = \Delta_1* \cdots * \Delta_r$ where $\Delta_1, 
\ldots, \Delta_r$ are plane lattice polytopes such that the Minkowski 
sum $\Delta= \Delta_1 + \cdots + \Delta_r$ is a reflexive polygon. 
\label{cay}
\end{prop}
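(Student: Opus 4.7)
The plan is to invoke the complete classification of $n$-dimensional Gorenstein polytopes of degree $2$ that has been built up in the preceding sections. By Theorem \ref{theo-vol4} every minimal such polytope satisfies $\nv(P) \leq 4$, and all non-pyramidal representatives with this bound are enumerated in Theorem \ref{class-aleq2}. The duality $P \leftrightarrow P^*$ together with Corollary \ref{sum-vol} turns this into an explicit list of maximal polytopes, and since every Gorenstein polytope of degree $2$ is a lattice subpolytope of some maximal one, in each dimension we obtain a finite explicit list of isomorphism classes to inspect.

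The first reduction is to non-pyramidal polytopes. If $P' = \Delta_1 * \cdots * \Delta_r$ is a Cayley polytope with $\Delta_1 + \cdots + \Delta_r$ a reflexive polygon, then $\Pi(P') = \Delta_1 * \cdots * \Delta_r * \{0\}$ is again a Cayley polytope of plane summands with the same reflexive Minkowski sum; iterating handles arbitrary pyramids, so we may assume $P$ is not a pyramid. For non-pyramidal $P$ with $\dim P = 2$ the claim is immediate: $P$ is itself a reflexive polygon and serves as its own Cayley polytope with $r = 1$. For $\dim P \in \{3, 4, 5\}$ I would go case by case through the list of Theorem \ref{class-aleq2} and its duals, matching each polytope against the Minkowski decompositions of reflexive polygons enumerated in Tables \ref{minkdec1} and \ref{minkdec2}. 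In practice the Cayley structure is detected by a lattice projection $\pi \colon M \to \Z^{r-1}$ onto the standard $(r-1)$-simplex whose vertex fibres are the summands $\Delta_i$; such a projection can be read off directly from the binomial relations in Theorem \ref{class-aleq2}, since the variables grouped together in the relations correspond to vertices lying in a common fibre.

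The sole exception is $2S_3$, the $3$-dimensional Gorenstein simplex with vertices $0, 2e_1, 2e_2, 2e_3$ and $\nv = 8$, which is a maximal polytope (its dual being the minimal simplex $P_1$ of Theorem \ref{class-aleq2}, with $\nv(P_1) = 2$). A potential Cayley realisation $2S_3 = \Delta_1 * \Delta_2$ with $\Delta_1 + \Delta_2$ a reflexive polygon would require a lattice projection $\pi \colon M \to \Z$ sending the four vertices of $2S_3$ into $\{0, 1\}$. Since $\pi(0) = 0$ and $\pi(2 e_i) = 2 \pi(e_i) \in \{0, 1\}$ force $\pi(e_i) = 0$ for every $i$, only the trivial projection exists and no such Cayley decomposition is possible. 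The main obstacle in the proof is the bookkeeping required for the case-by-case verification of the non-exceptional polytopes, but this is routine given the explicit classification and the Emiris--Tsigaridas tables.
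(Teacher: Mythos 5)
Your reduction to non-pyramids, the dimension-$2$ case, and your argument that $2S_3$ admits no Cayley structure (via a lattice projection onto $[0,1]$, with the harmless WLOG $\pi(0)=0$) are all fine, and identifying the maximal polytopes as the duals of the polytopes classified in \ref{class-aleq2} (via \ref{theo-vol4}) is the right starting point. But the case analysis you then propose does not cover the statement. The list ``\ref{class-aleq2} and its duals'' consists only of those $P$ with $\nv(P)\leq 4$ or $\nv(P^*)\leq 4$; in dimension $3$ this misses, for instance, the self-dual polytopes $P_8$ and $P_9$ with $\nv =5$, and in general it misses every Gorenstein polytope of degree $2$ that is neither minimal in volume nor maximal. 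To inspect those case by case you would first need the complete list of all Gorenstein polytopes of degree $2$, i.e.\ an enumeration of all Gorenstein subpolytopes of the maximal ones --- a substantial step that your proposal defers as routine bookkeeping and which in the paper is only carried out afterwards (Theorem \ref{class-GorDP}).

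The idea that makes the paper's proof short, and which is absent from your proposal, is the hereditary property of Cayley structures: every full-dimensional lattice subpolytope of $\Delta_1 * \cdots * \Delta_r$ is again a Cayley polytope $\Delta_1' * \cdots * \Delta_r'$ with $\Delta_i' \subseteq \Delta_i$, and if that subpolytope is Gorenstein of degree $2$ then by \ref{CayleyGorenstein} the Minkowski sum $\Delta_1' + \cdots + \Delta_r'$ is automatically a reflexive polygon. With this lemma it suffices to verify the Cayley property for the maximal polytopes alone; all other Gorenstein polytopes of degree $2$ inherit it as subpolytopes, and no further enumeration is needed. The same lemma dictates how the one exceptional maximal polytope must be treated: since $2S_3$ is not a Cayley polytope, its proper Gorenstein subpolytopes are not automatically covered, and one has to check (as the paper does) that removing any vertex of $2S_3$ produces a Cayley polytope, whence every proper Gorenstein subpolytope of $2S_3$ is again a Cayley polytope. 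Your proposal only shows that $2S_3$ itself is exceptional and says nothing about the polytopes contained in it, so even granting the case-by-case check of your list, this part of the assertion remains unproved.
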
 

\begin{proof} It is enough to assume that $P$ is not a pyramid, because 
a pyramid over a Cayley polytope is again a Cayley polytope.   
On the other hand, every $n$-dimensional 
subpolytope of a Cayley polytope $\Delta_1* \cdots * \Delta_r$ 
 is again a Cayley polytope $\Delta_1'* \cdots * \Delta_r'$, where 
$\Delta'_i$ is a lattice subpolytope of $\Delta_i$ $(i =1, \ldots, r)$. 
By  
\ref{theo-vol4}, if $P$ is a maximal Gorenstein polytope of degree 
$2$, then $\nv(P^*) \leq 4$, i.e., $P^*$ is one of polytopes 
classified in \ref{class-aleq2}. 
One can check the whole list of  these Gorenstein 
polytopes $P^*$ 
and obtain that among potential maximal polytopes  $P$ 
only  the $3$-dimensional Gorenstein polytope $2S_3$ can not be described as 
a Cayley polytope. Moreover, if $v_i$ is a vertex of $2S_3$ and 
$P_i$ is the convex hull of $(2S_3 \cap M) \setminus \{v_i\}$ 
then $P_i$ is again a Cayley polytope. This shows that any proper 
Gorenstein subpolytope of $2S_3$ is a also a Cayley polytope.    
\end{proof}

\begin{theo}
There exist exactly $37$ $n$-dimensional Gorenstein polytopes of degree 
$2$ which are not pyramids:\\
$(1)$ $16$ reflexive polygons; \\
$(2)$ $3$-dimensional Gorenstein polytopes $P_1, \ldots, P_{15}$; \\
$(3)$ $4$-dimensional  Gorenstein polytopes $Q_1, \ldots, Q_5$; \\
$(4)$  $5$-dimensional  Gorenstein  polytope $R_1$. 
\label{class-GorDP}
\end{theo}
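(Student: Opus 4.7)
The strategy is to combine Proposition \ref{cay} with the Emiris--Tsigaridas classification of Minkowski sum decompositions of reflexive polygons summarized in Tables \ref{minkdec1} and \ref{minkdec2}. The case $n=2$ is classical: up to lattice isomorphism there are exactly the $16$ reflexive polygons shown in Figure \ref{LatticePolygons}, and every reflexive polygon is automatically non-pyramidal.

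For $n\geq 3$ I would first invoke Proposition \ref{cay}: every non-pyramid $n$-dimensional Gorenstein polytope $P$ of degree $2$ is either the exceptional polytope $2S_3$ (a single $3$-dimensional example) or a Cayley polytope $P\cong \Delta_1\ast\cdots\ast\Delta_r$ where $\Delta:=\Delta_1+\cdots+\Delta_r$ is a reflexive polygon. Since the Cayley polytope of $r$ polytopes in $2$-dimensional $M_\R$ has dimension $2+r-1=r+1$, the summand number satisfies $r=n-1$. Moreover $P$ is a pyramid exactly when some $\Delta_i$ is a single point, because a $0$-dimensional summand contributes an apex vertex and realizes $P$ as a pyramid over the Cayley polytope of the remaining summands. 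Therefore the $n$-dimensional, non-pyramid Cayley polytopes of degree $2$ correspond bijectively, up to lattice isomorphism, with unordered Minkowski decompositions of reflexive polygons into exactly $n-1$ summands, each of positive dimension.

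The classification of these decompositions is precisely what Emiris and Tsigaridas provide in Tables \ref{minkdec1} and \ref{minkdec2}. Reading the tables, one counts $14$ decompositions into $2$ positive-dimensional summands, $5$ into $3$ summands, $1$ into $4$ summands (the reflexive square written as a sum of four unit segments, yielding the polytope $\Theta_2=R_1$), and none into $5$ or more summands --- the cut-off being structural, since a reflexive polygon has at most four distinct primitive edge directions. Adding the exception $2S_3$ to the $r=2$ count gives $14+1=15$ non-pyramid polytopes in dimension $3$; together with $5$ in dimension $4$, $1$ in dimension $5$, and $0$ in dimension $\geq 6$, and combined with the $16$ reflexive polygons in dimension $2$, this yields the final total $16+15+5+1=37$.

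The principal technical obstacle is the last verification step, namely showing that distinct entries of the Emiris--Tsigaridas tables produce pairwise non-isomorphic Cayley polytopes, and that every isomorphism class of Cayley polytope is represented. For this I would use the fact that two Cayley polytopes $\Delta_1\ast\cdots\ast\Delta_r$ and $\Delta_1'\ast\cdots\ast\Delta_r'$ are lattice-isomorphic if and only if there is a permutation $\sigma$ of indices and a lattice automorphism $\varphi$ of $M$ such that $\varphi(\Delta_i)$ is a lattice translate of $\Delta_{\sigma(i)}'$ for every $i$. The tables are already organised by these isomorphism classes, so the count transfers directly, and a routine case-by-case inspection then identifies the enumerated Cayley polytopes with the explicit lists $P_1,\ldots,P_{15}$, $Q_1,\ldots,Q_5$ and $R_1$ named in the statement, thereby completing the classification.
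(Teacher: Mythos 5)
Your overall strategy is the first of the two routes the paper itself suggests (Proposition \ref{cay} combined with the Emiris--Tsigaridas tables), so the approach is the intended one; but the step you single out as ``the principal technical obstacle'' is resolved by a claim that is actually false, and this is a genuine gap. You assert that two Cayley polytopes $\Delta_1\ast\cdots\ast\Delta_r$ and $\Delta_1'\ast\cdots\ast\Delta_r'$ are lattice-isomorphic \emph{if and only if} the multisets of summands agree up to a common lattice automorphism and translations, and you conclude that distinct table entries give pairwise non-isomorphic polytopes, so that ``the count transfers directly.'' Only the ``if'' direction of that criterion is true: a lattice isomorphism of the ambient polytopes need not respect the Cayley fibration, and the paper's own lists refute injectivity --- $P_6$, $P_{10}$ and $Q_5$ each carry two essentially different Cayley presentations (for instance $P_{10}$ arises both from a sum of two congruent summands and from a sum of two non-congruent ones, and $Q_5$ from two visibly different triples). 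Correspondingly, the tables contain more decompositions than isomorphism classes (roughly $16$--$17$ entries with two positive-dimensional summands and $6$ with three, against the $14$ and $5$ polytopes in the theorem), so your stated counts of ``$14$, $5$, $1$'' already presuppose exactly the identifications your criterion denies. To close the argument one must actually decide which decompositions yield isomorphic Cayley polytopes --- e.g.\ by computing the polytopes or invariants such as $f$-vectors and $\nv$, or by switching to the paper's second route: every Gorenstein polytope of degree $2$ sits inside a maximal one, the maximal ones are dual (via \ref{theo-vol4} and \ref{minim}) to the explicit list in \ref{class-aleq2}, and one enumerates their degree-$2$ subpolytopes.

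Two smaller points. First, your claim that $P$ is a pyramid \emph{exactly} when some $\Delta_i$ is a point is only obvious in one direction; for general lattice polytopes the other direction fails (the Cayley polytope of two unit segments in independent directions is a unimodular simplex, hence a lattice pyramid), so in the degree-$2$ setting it needs an argument, e.g.\ via \ref{pyr1} and \ref{CayleyGorenstein}. Second, the ``structural'' reason you give for the absence of decompositions into five or more positive-dimensional summands (``at most four distinct primitive edge directions'') is not correct as stated; a clean argument is that each positive-dimensional summand contributes at least $2$ to the total lattice length of $\partial\Delta$, which equals $\nv(\Delta)\le 9$ by \ref{face-vol}, bounding the number of such summands by $4$.
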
 

\begin{proof}
There are two ways to prove this statement. 

The first one is to use \ref{cay} and the classification of all 
possible Minkowski sum decompositions of reflexive polygons due to 
Emiris and Tsigaridas \cite{ET06}.  

The second one is to enumerate all Gorenstein subpolytopes of degree 
$2$ of maximal Gorenstein polytopes of degree $2$. The maximal Gorenstein 
polytopes  are contained 
in the list of all $P$ such that $a_{P^*}= \nv(P^*) -2 \leq 2$ 
(see \ref{class-aleq2}). 

Both ways do not need any computer and can be easily realized.    
\end{proof}

Below we give  a complete list of $n$-dimensional $(n \geq 3)$ Gorenstein
polytopes $P$ of degree  $2$ which are not pyramids. This list  includes
some of  combinatorial invariants of $P$ 
 such as its $f$-vector $f = (f_0, \dots,
f_n)$ (where $f_i$ denotes the number of $i$-dimensional faces of $P$), the 
dual partner $P^*$, the lattice normalized volume 
$\nv(P)$ as well as 
all possible descriptions of $P$  as a Cayley polytope.


\begin{center}

\begin{tabular}[h]{cc}
  \hspace*{0.1cm} \rotatebox{90}{{\epsfig{file=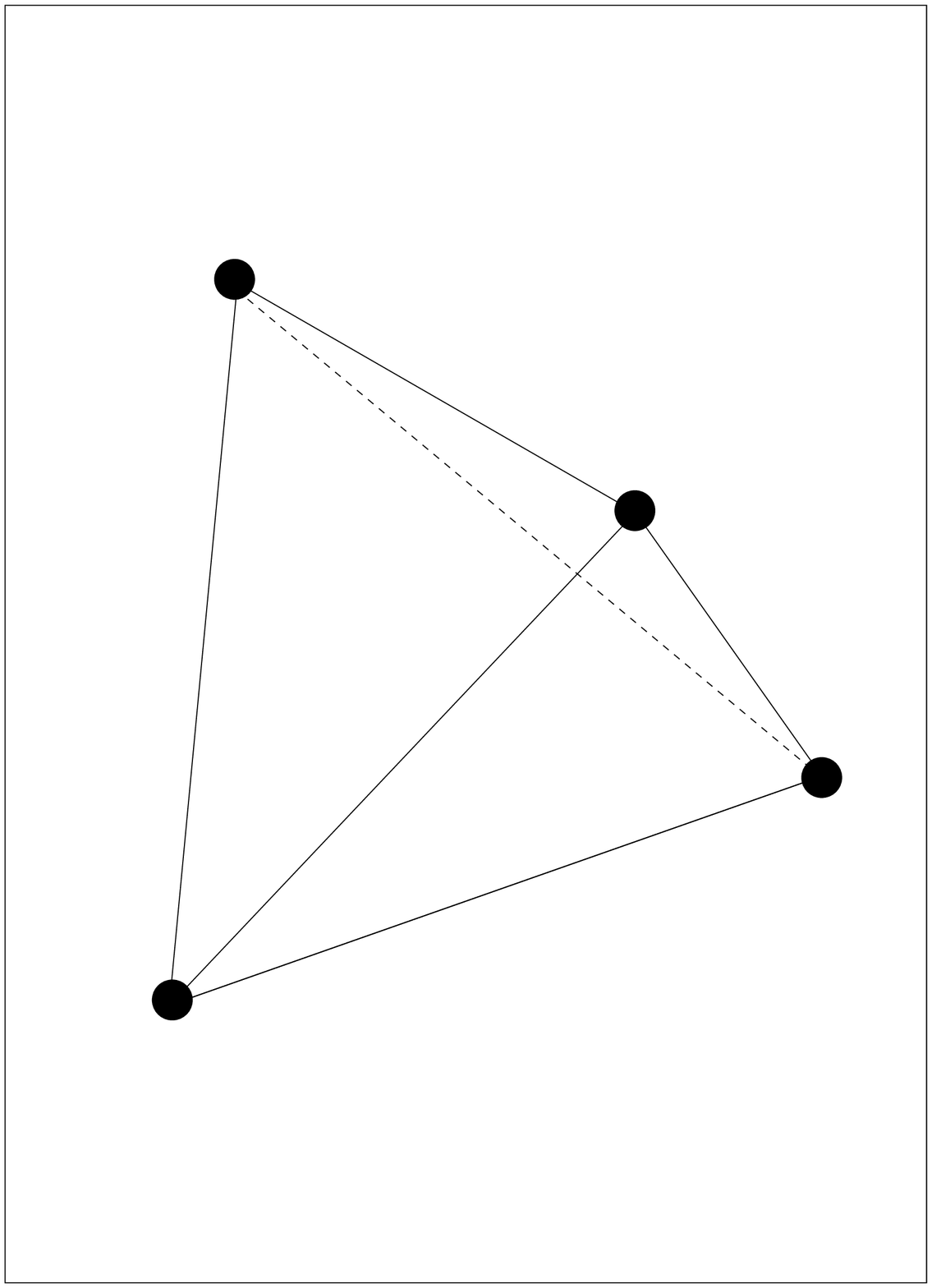, height=4cm}}}
  \hspace*{1.9cm}& 
  \rotatebox{90}{{\epsfig{file=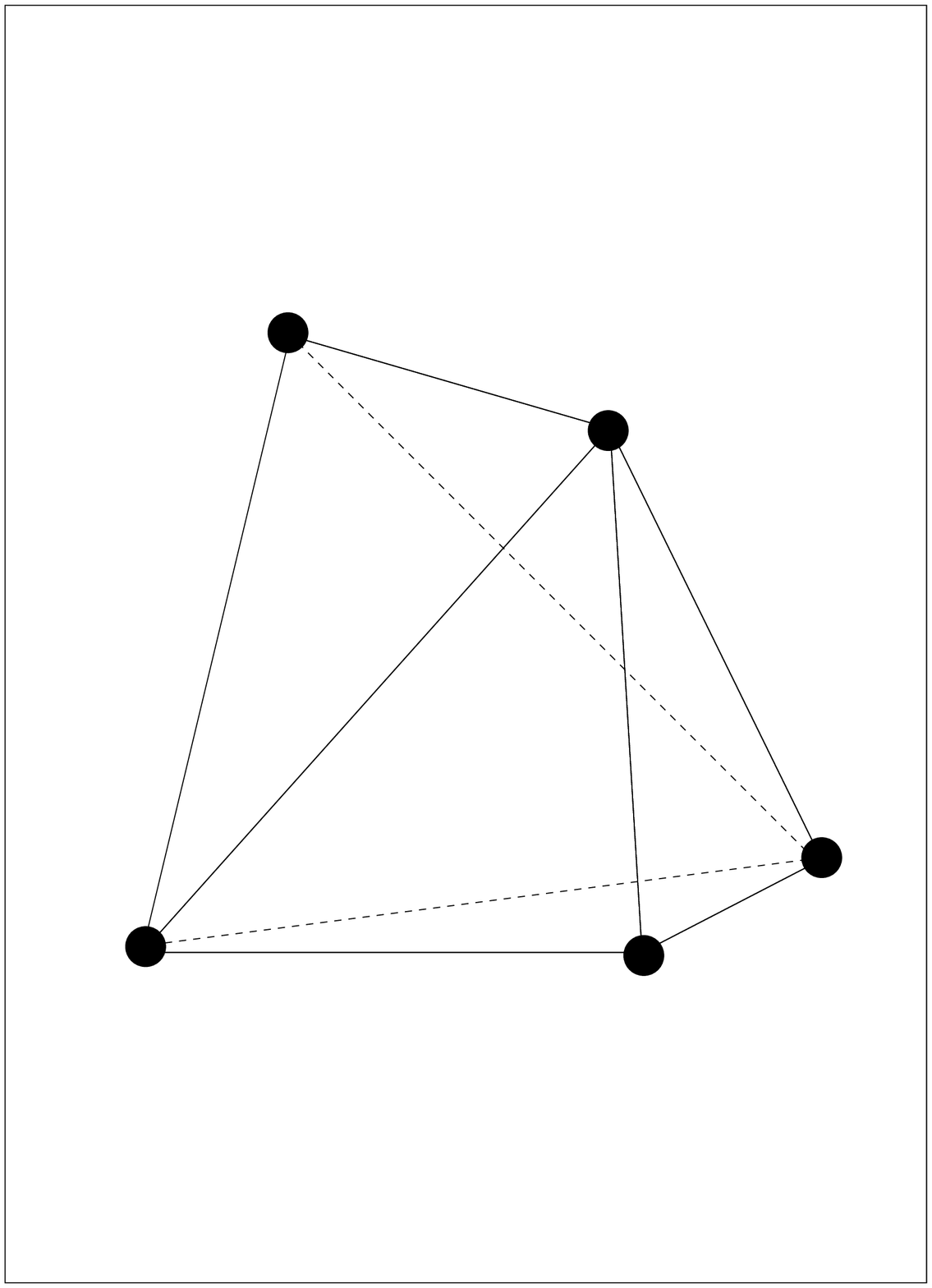, height=4cm}}}\\  
  \begin{minipage}[h]{4cm}
    \begin{tabular}[h]{cl}
     \raisebox{.5ex}[-1.5ex]{$P_1$}  &
      {  {\epsfig{file=side4.eps, height=.5cm}} 
      \raisebox{1ex}{$\ast$} 
      {\epsfig{file=side3.eps, height=.5cm}}}\\ \hline
    &$f = (1,4,6,4,1)$ \rule{0em}{1em}\\
    & $ P_{1}^{\ast} = P_{15} $\\
    & $\nv(P_1) = 2$
  \end{tabular}
  \end{minipage}\hspace*{2cm}
   &
  \begin{tabular}[h]{cl}
    \raisebox{.5ex}[-1.5ex]{$P_2$} &
    { {\epsfig{file=side7.eps, height=.5cm}} 
      \raisebox{1ex}{$\ast$} 
      {\epsfig{file=side4.eps, height=.5cm}}} \\ \hline
    &$f = (1,5,9,6,1)$ \rule{0em}{1em}\\
    & $ P_{2}^{\ast} = P_{14} $\\
    & $\nv(P_2) = 3$
  \end{tabular}
  \\ \vspace*{1em}
\end{tabular}  

\begin{tabular}[h]{cc}
  \hspace*{0.1cm} \rotatebox{90}{{\epsfig{file=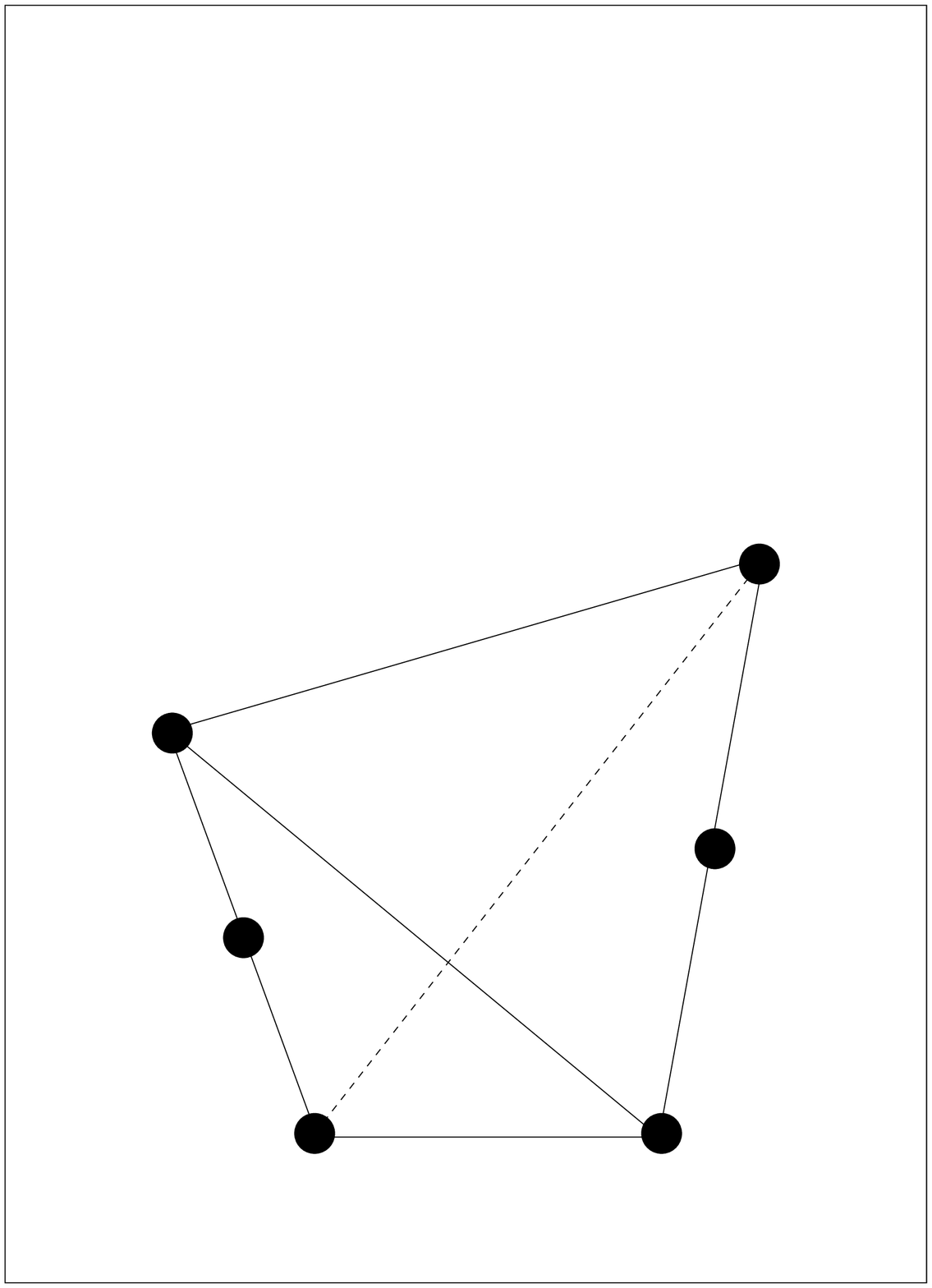, height=4cm}}}
  \hspace*{1.9cm}&
  \rotatebox{90}{{\epsfig{file=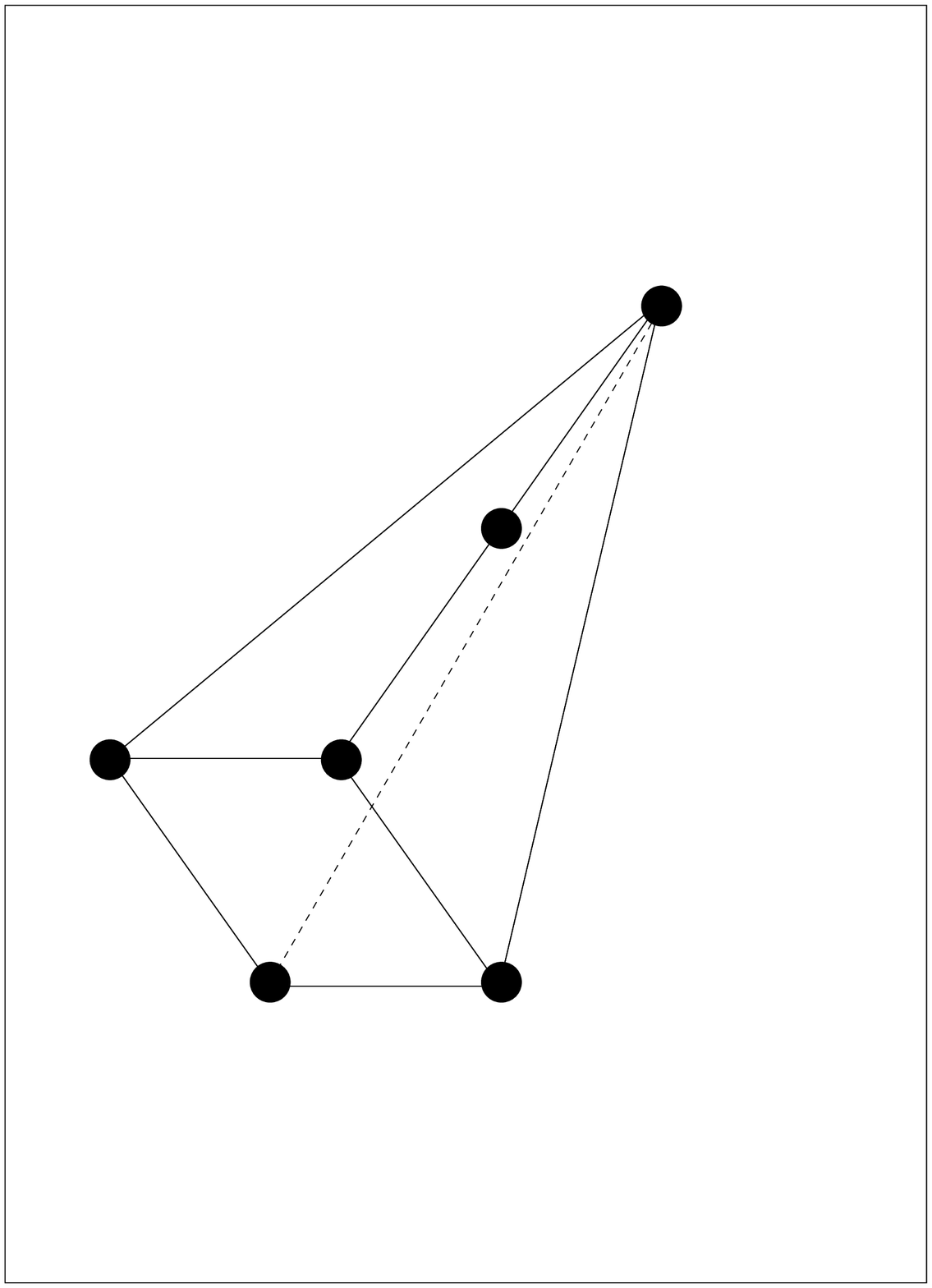, height=4cm}}} \\
  \begin{minipage}[h]{4cm}
    \begin{tabular}[h]{cl}
      \raisebox{.5ex}[-1.5ex]{$P_3$}&
      { {\epsfig{file=side6.eps, height=1cm}} 
      \raisebox{2.5ex}{$\ast$}
    {\epsfig{file=side5.eps, width=1cm}}} \\ \hline
    &$f = (1,4,6,4,1)$ \rule{0em}{1em}\\
    & $ P_{3}^{\ast} =  P_{3} $\\
    & $\nv(P_3) = 4$
  \end{tabular}
  \end{minipage}\hspace*{2cm}
   &
  \begin{tabular}[h]{cl}
    \raisebox{.5ex}[.5ex]{$P_4$}&
    {{\epsfig{file=side2.eps, height=.5cm}} 
      \raisebox{1ex}{$\ast$}
    {\epsfig{file=side10.eps, height=.5cm}} }\rule{0pt}{1cm} \\ \hline
    &$f = (1,5,8,5,1)$ \rule{0em}{1em}\\
    & $ P_{4}^{\ast} = P_{13} $\\
    &$\nv(P_4) = 4$
  \end{tabular}\\  \vspace*{1em}
\end{tabular}

\begin{tabular}[h]{cc}
  \hspace*{0.1cm} \rotatebox{90}{{\epsfig{file=P6.eps, height=4cm}}}
  \hspace*{1.9cm}&
  \rotatebox{90}{{\epsfig{file=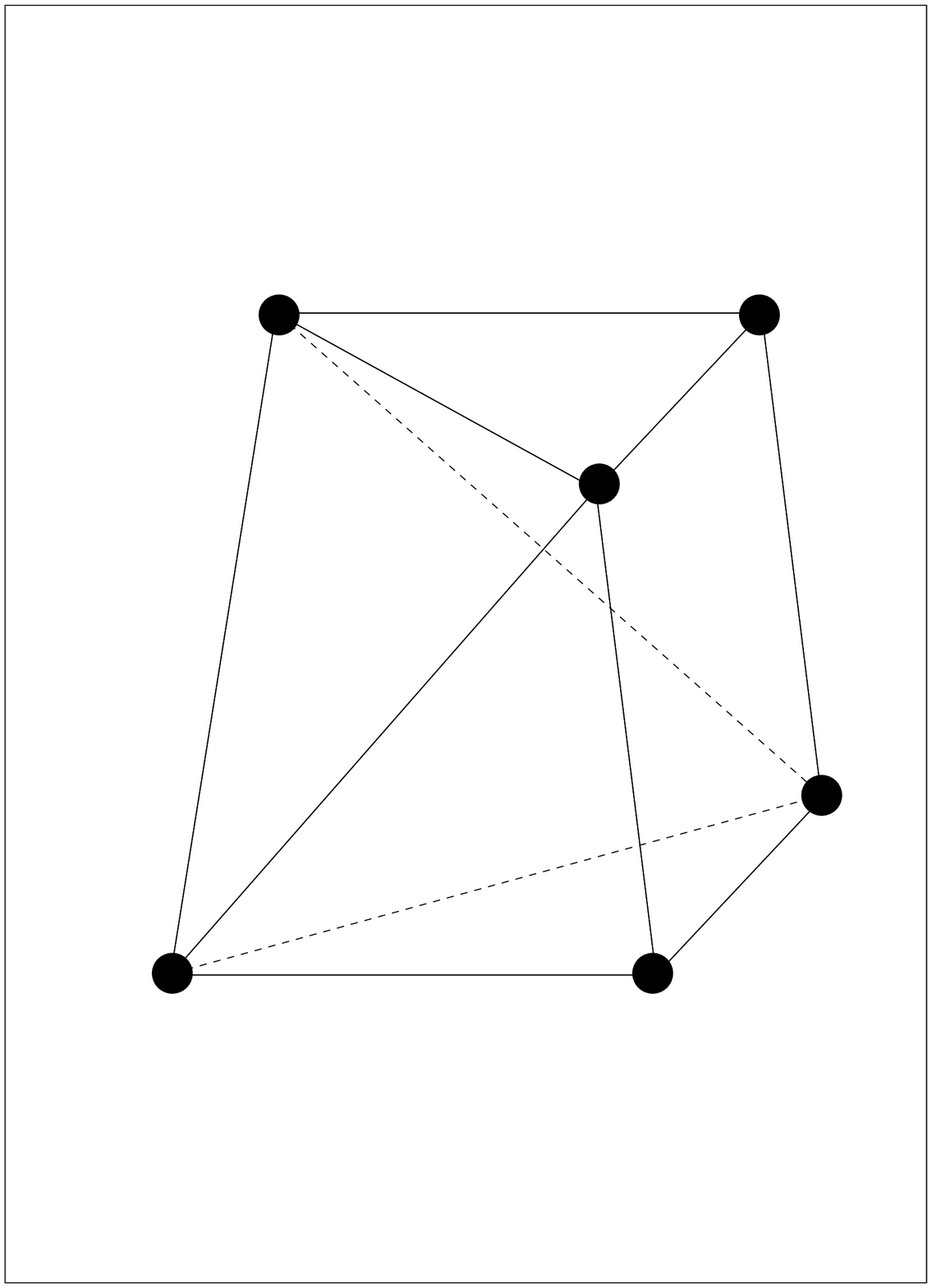, height=4cm}}} \\
  \begin{minipage}[h]{4cm}
    \begin{tabular}[h]{cl}
      \raisebox{.5ex}[-1.5ex]{$P_5$} &
      {{\epsfig{file=side16.eps, height=.5cm}}
      \raisebox{1ex}{$\ast$}
   {\epsfig{file=side2.eps, height=.5cm}} } \\ \hline
    &$f =(1,5,9,6,1) $ \rule{0em}{1em}\\
    & $ P_{5}^{\ast} = P_{10} $\\
    & $\nv(P_5) = 4$    
  \end{tabular}
  \end{minipage}\hspace*{2cm}
   &
  \begin{tabular}[h]{cl}
     &
    {{\epsfig{file=side9.eps,  height=.5cm}} 
      \raisebox{1ex}{$\ast$ }
      {\epsfig{file=side3.eps, height=.5cm}}} \\  
    \raisebox{3ex}[-1.5ex]{$P_6$} 
    & {\epsfig{file=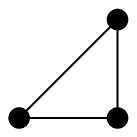, height=.5cm}} 
    \raisebox{1ex}{$\ast$ }
      {\epsfig{file=side7.eps, height=.5cm}}\\ \hline
      &$f = (1,6,11,7,1)$ \rule{0em}{1em}\\
      & $ P_{6}^{\ast} = P_{11} $\\
      & $\nv(P_6) = 4$
  \end{tabular}\\  \vspace*{1em}
\end{tabular}

\begin{tabular}[h]{cc}
  \hspace*{0.1cm} \rotatebox{90}{{\epsfig{file=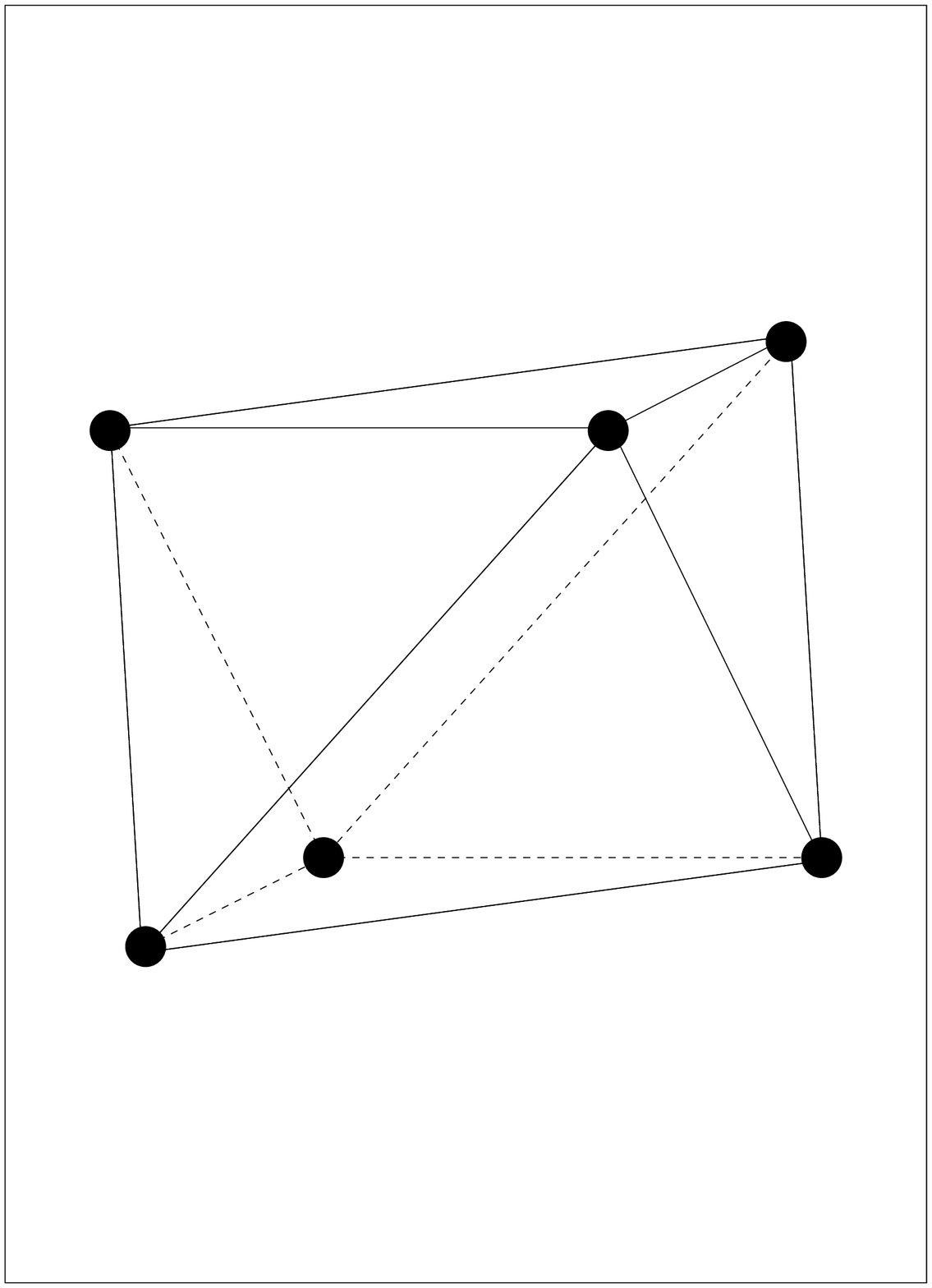, height=4cm}}}
  \hspace*{1.9cm}&
  \rotatebox{90}{{\epsfig{file=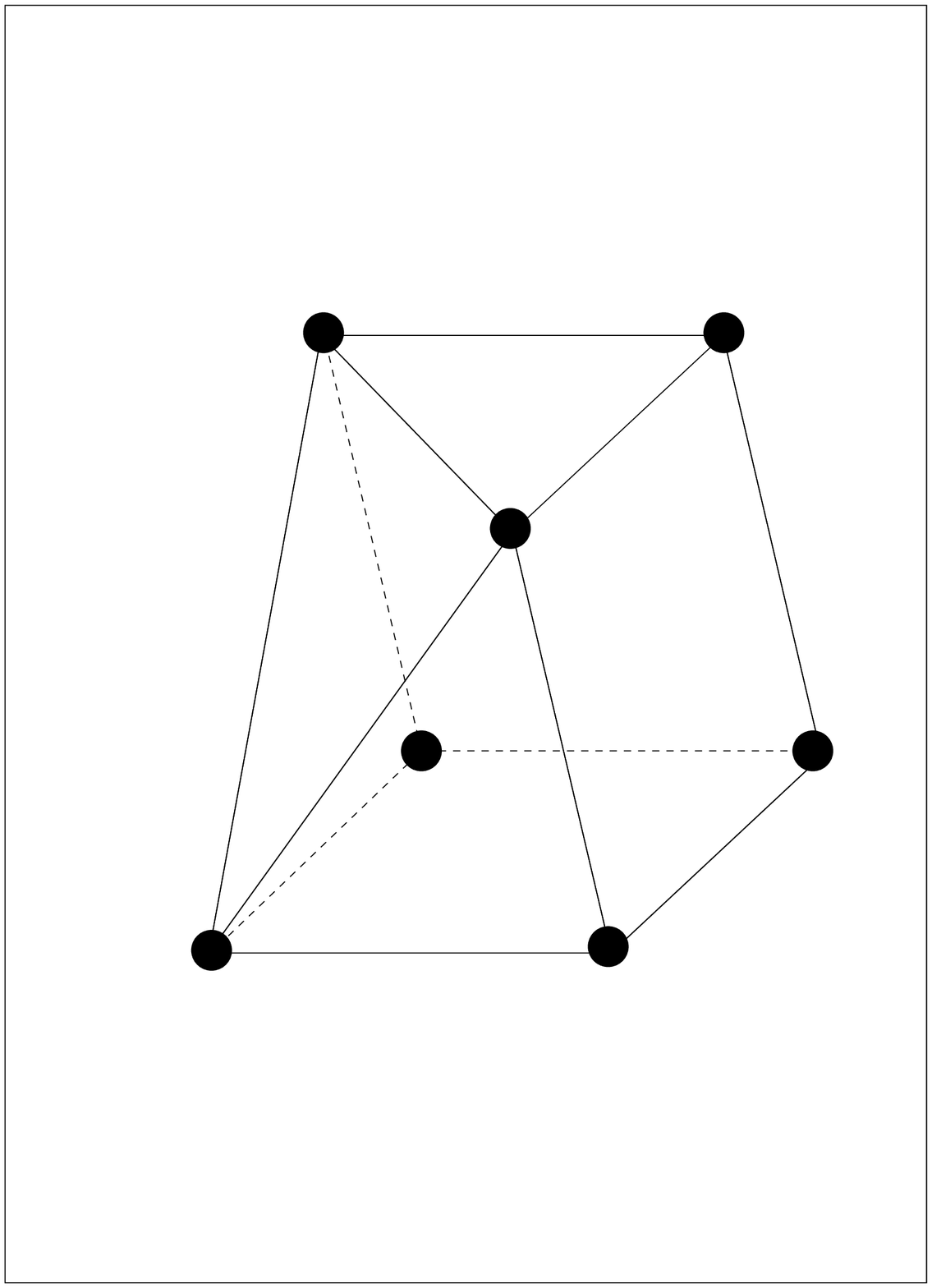, height=4cm}}} \\
  \begin{minipage}[h]{4cm}
    \begin{tabular}[h]{cl}
      \raisebox{.5ex}[-1.5ex]{$P_7$} &
      {{\epsfig{file=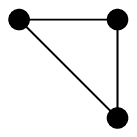, height=.5cm}} 
      \raisebox{1ex}{$\ast$}
    {\epsfig{file=side7.eps, height=.5cm}}  } \\ \hline
    &$f =(1,6,12,8,1) $ \rule{0em}{1em}\\
    & $ P_{7}^{\ast} = P_{12} $\\
    & $\nv(P_7) = 4$
  \end{tabular}
  \end{minipage}\hspace*{2cm}
   &
  \begin{tabular}[h]{cl}
    \raisebox{.5ex}[-1.5ex]{$P_8$} &
    {{\epsfig{file=side9.eps, height=.5cm}} 
      \raisebox{1ex}{$\ast$}
    {\epsfig{file=side7.eps, height=.5cm}} }\\ \hline
    &$f = (1,7,12,7,1)$ \rule{0em}{1em}\\
    & $ P_{8}^{\ast} =  P_{8} $\\
    & $\nv(P_8) = 5$
  \end{tabular}\\  \vspace*{1em}
\end{tabular}

\begin{tabular}[h]{cc}
  \hspace*{0.1cm} \rotatebox{90}{{\epsfig{file=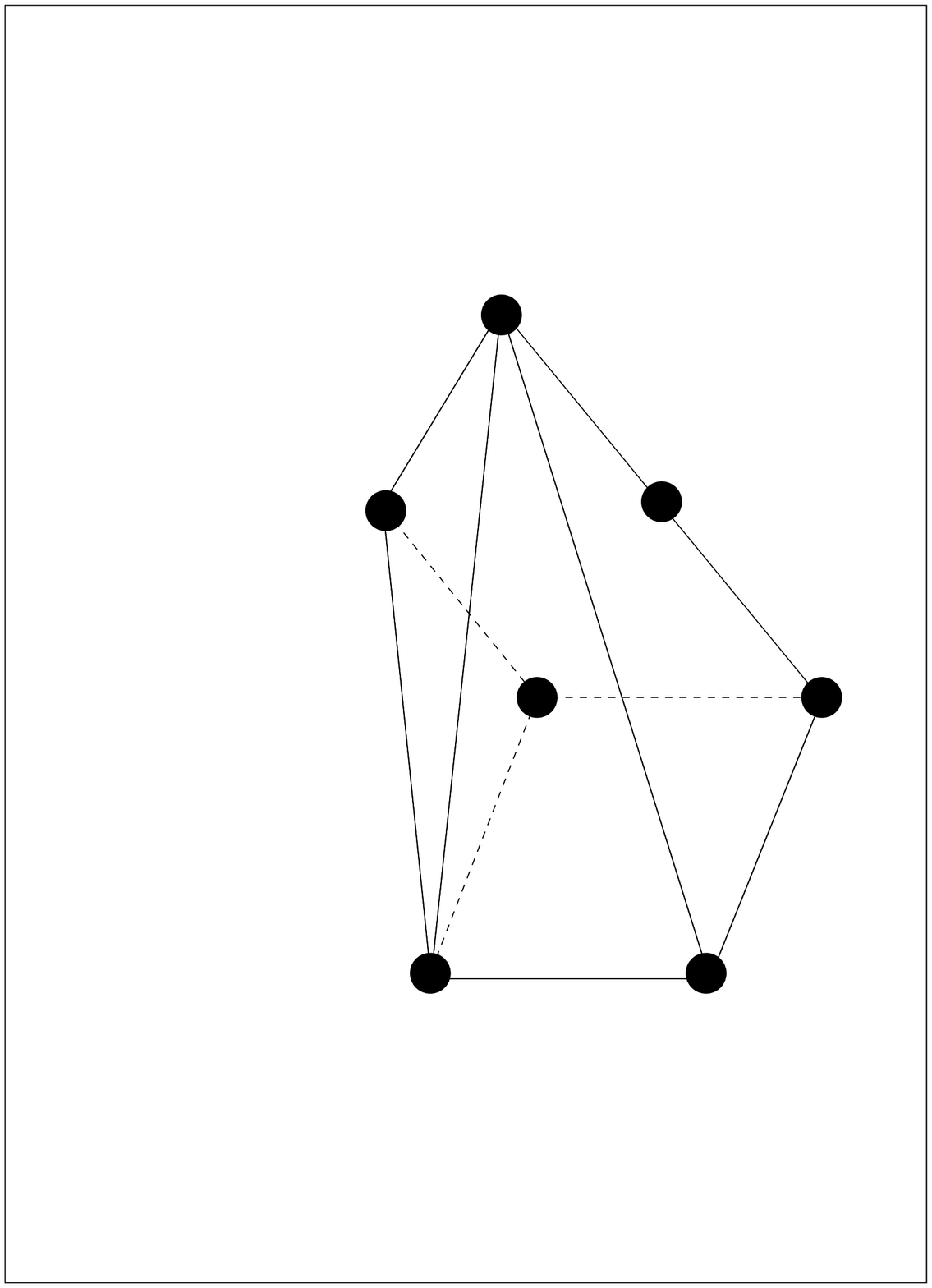, height=4cm}}}
  \hspace*{1.9cm}&
  \rotatebox{90}{{\epsfig{file=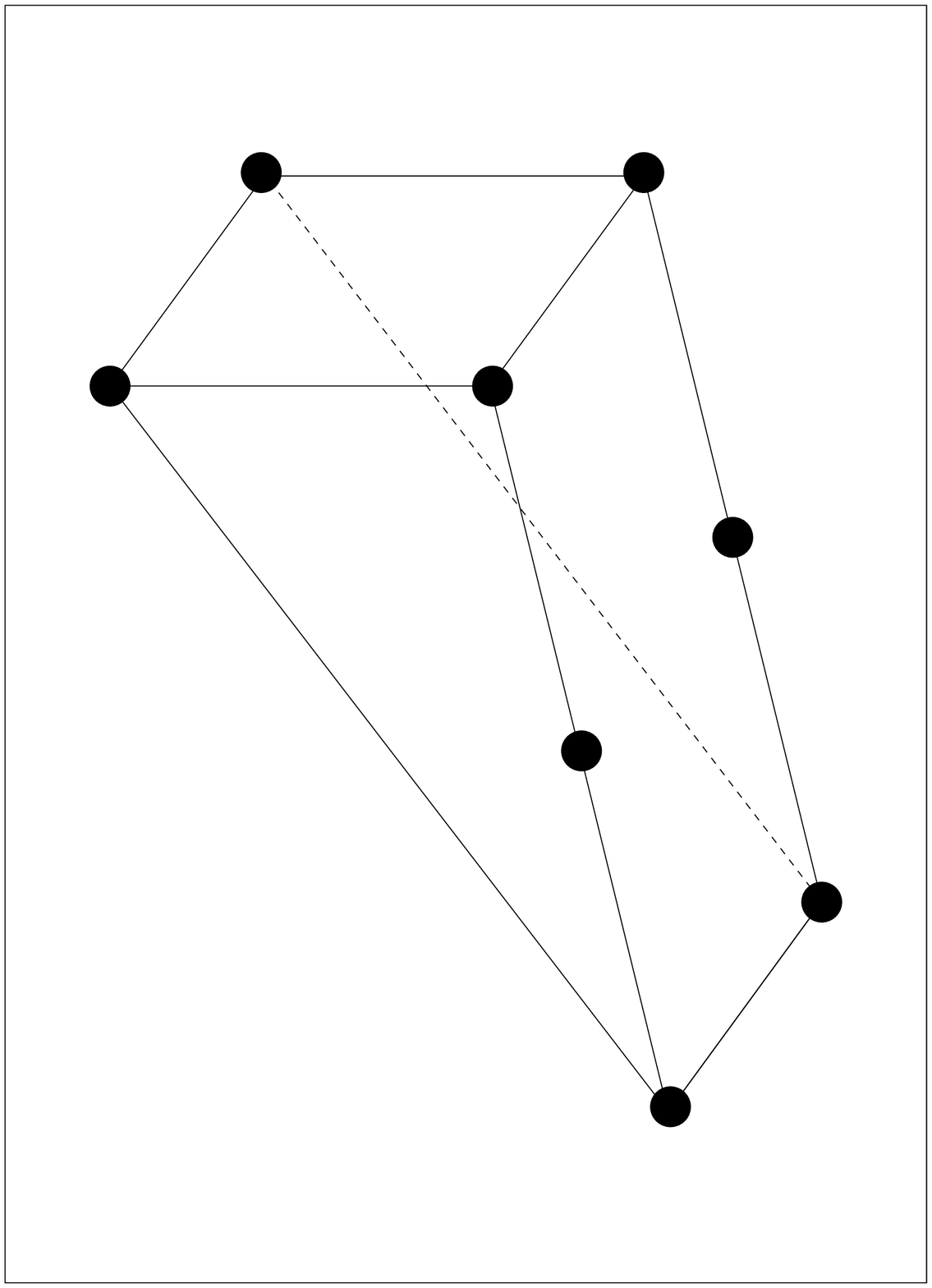, height=4cm}}} \\
  \begin{minipage}[h]{4cm}
    \begin{tabular}[h]{cl}
      \raisebox{.5ex}[-1.5ex]{$P_9$} &
      { {\epsfig{file=side10.eps, height=.5cm}} 
      \raisebox{1ex}{$\ast$}
    {\epsfig{file=side7.eps, height=.5cm}} } \\ \hline
    &$f =(1,6,10,6,1) $ \rule{0em}{1em}\\
    & $ P_{9}^{\ast} =  P_{9}  $\\
    & $\nv(P_9) = 5$    
  \end{tabular}
  \end{minipage}\hspace*{2cm}
   &
  \begin{tabular}[h]{cl}
     &
    {{\epsfig{file=side10.eps, height=.5cm}} 
      \raisebox{1ex}{$\ast$ }
      {\epsfig{file=side10.eps, height=.5cm}}}\\  
     \raisebox{3ex}[-1.5ex]{$P_{10}$} 
     & {\epsfig{file=side15.eps, height=.5cm}} \raisebox{1ex}{$\ast$}
    {\epsfig{file=side2.eps, height=.5cm}}\\ \hline
    &$f =(1,6,9,5,1) $ \rule{0em}{1em}\\
    & $ P_{10}^{\ast} =  P_{5} $\\
    & $\nv(P_{10}) = 6$
  \end{tabular}\\  \vspace*{1em}
\end{tabular}

\begin{tabular}[h]{cc}
  \hspace*{0.1cm} \rotatebox{90}{{\epsfig{file=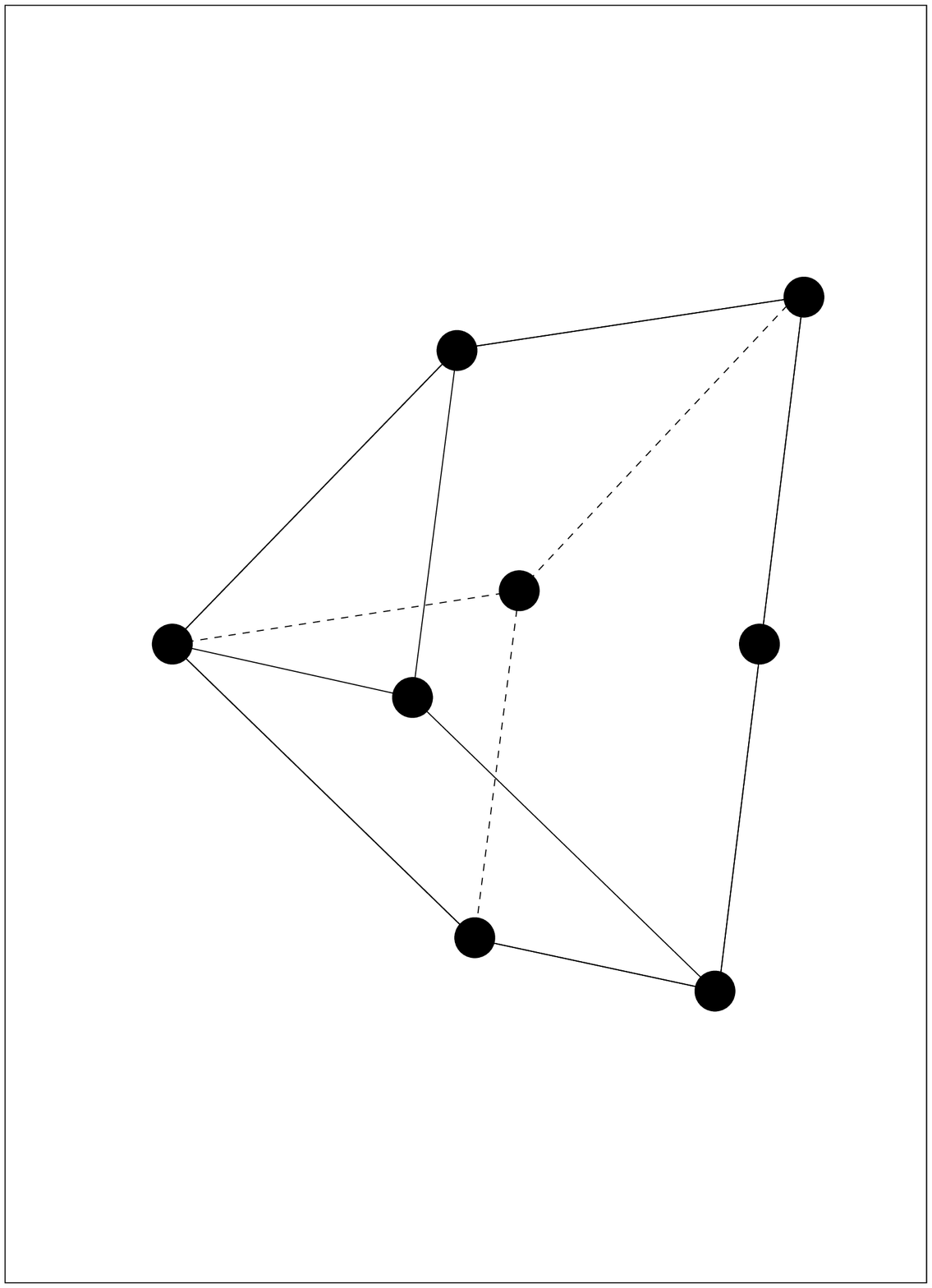, height=4cm}}}
  \hspace*{1.9cm}&
  \rotatebox{90}{{\epsfig{file=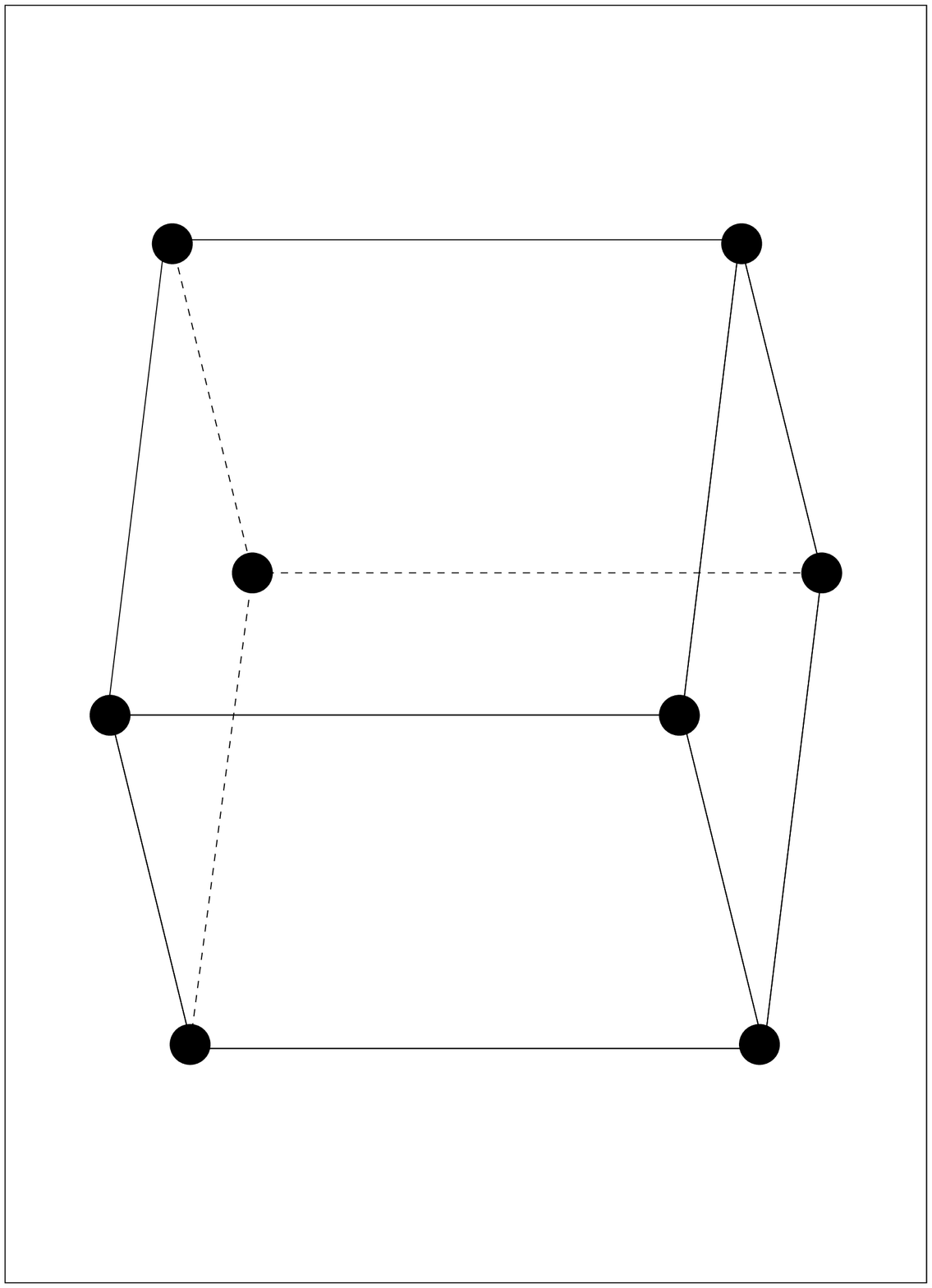, height=4cm}}} \\
  \begin{minipage}[h]{4cm}
     \begin{tabular}[h]{cl}
       \raisebox{.5ex}[-1.5ex]{$P_{11}$} &
       {{\epsfig{file=side7.eps, height=.5cm}} 
      \raisebox{1ex}{$\ast$}
    {\epsfig{file=side12.eps, height=.5cm}}  } \\ \hline
    &$f = (1,7,11,6,1)$ \rule{0em}{1em}\\
    & $ P_{11}^{\ast} = P_{6}  $ \\
    & $\nv(P_{11}) = 6$
  \end{tabular}
  \end{minipage}\hspace*{2cm}
  &
  \begin{tabular}[h]{cl}
    \raisebox{.5ex}[-1.5ex]{$P_{12}$} &
    {{\epsfig{file=side9.eps, height=.5cm}} 
      \raisebox{1ex}{$\ast$}
    {\epsfig{file=side9.eps, height=.5cm}} }\\ \hline
    &$f =(1,8,12,6,1) $ \rule{0em}{1em}\\
    & $P_{12}^{\ast}=  P_{7} $\\
    & $\nv(P_{12}) = 6$
  \end{tabular}\\  \vspace*{1em}
\end{tabular}

\begin{tabular}[h]{cc}
  \hspace*{0.1cm} \rotatebox{90}{{\epsfig{file=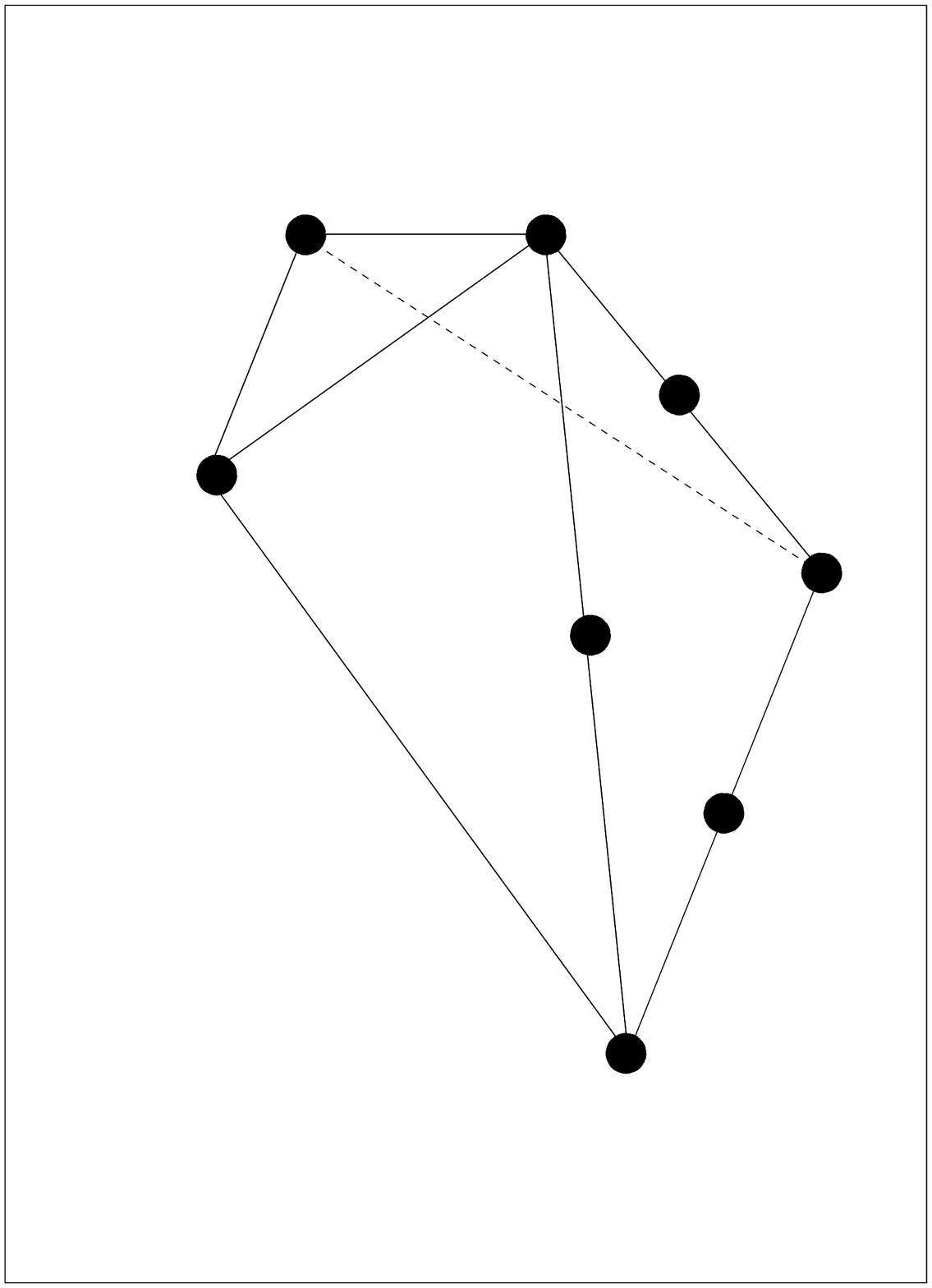, height=4cm}}}
  \hspace*{1.9cm} &
  \rotatebox{90}{{\epsfig{file=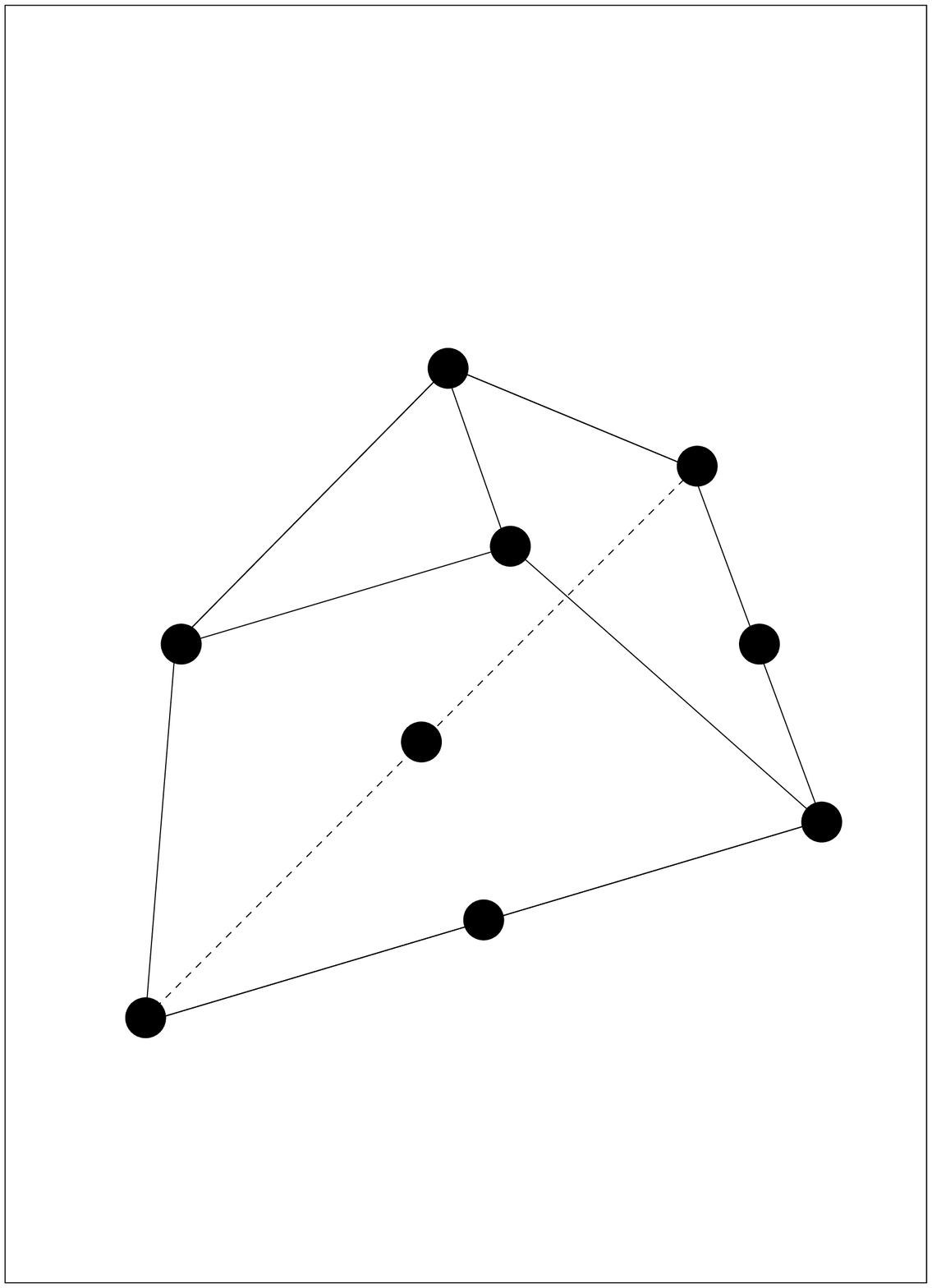, height=4cm}}} \\
  \begin{minipage}[h]{4cm}
    \begin{tabular}[h]{cl}
      \raisebox{.5ex}[-1.5ex]{$P_{13}$}&
      {{\epsfig{file=side1.eps, width=.5cm}} 
      \raisebox{2.5ex}{$\ast$}
    {\epsfig{file=side13.eps, height=1cm}} } \\ \hline
    &$f = (1,5,8,5,1)$ \rule{0em}{1em}\\
    & $  P_{13}^{\ast} =  P_{4} $\\
    & $\nv (P_{13}) = 6$
  \end{tabular}
  \end{minipage}\hspace*{2cm}
   &
  \begin{tabular}[h]{cl}
    \raisebox{.5ex}[-1.5ex]{$P_{14}$} &
    {{\epsfig{file=side13.eps, height=1cm}} 
      \raisebox{2.5ex}{$\ast$}
    {\epsfig{file=side7.eps, width=.5cm}} }\\ \hline
    &$f =(1,6,9,5,1) $ \rule{0em}{1em}\\
    & $P_{14}^{\ast} =  P_{2}  $\\
    & $\nv (P_{14}) = 7$
  \end{tabular}\\  \vspace*{1em}
\end{tabular}

\end{center}

\begin{center}

\hspace*{5.25cm}
\begin{tabular}[h]{lc}
  \hspace*{0.1cm}\rotatebox{90}{{\epsfig{file=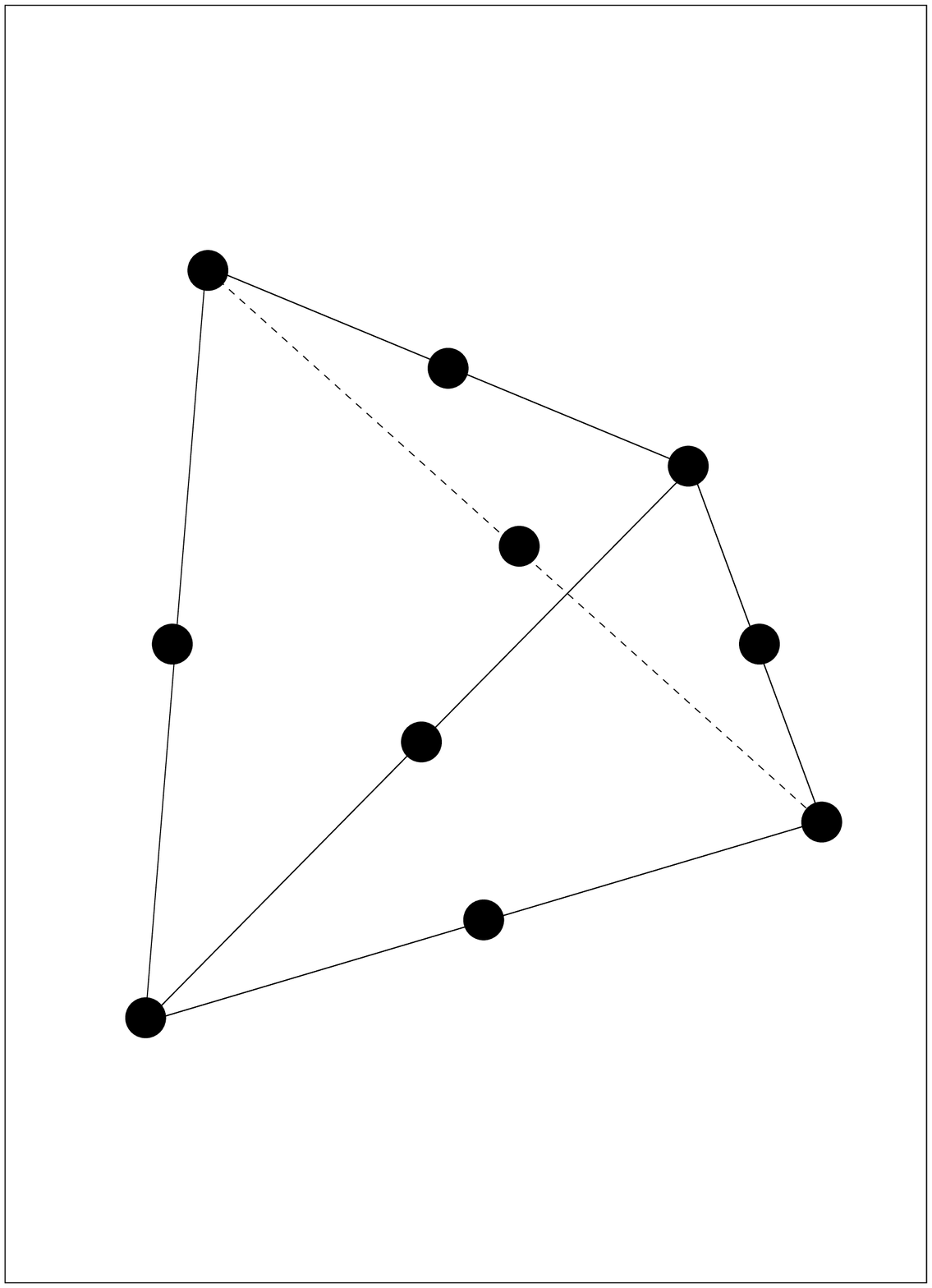, height=4cm}}}\\
 \begin{tabular}[h]{cl}    
    \raisebox{.5ex}[-1.5ex]{$P_{15}$} &  none \\ \hline
    &$f = (1,4,6,4,1)$ \rule{0em}{1em}\\
    & $ P_{15}^{\ast} =  P_{1}  $\\
    & $\nv(P_{15}) = 8$
  \end{tabular} \hspace*{2cm}  &
   \rule{4cm}{0pt}
\end{tabular}

\end{center}

\newpage


\begin{center} 
The list of all $4$-dimensional Gorenstein non-pyramids $Q_i$
of index $2$:

\end{center}
\bigskip

\bigskip

\begin{center}  
\begin{tabular}[h]{cc}
  \begin{tabular}[h]{cl}
    \raisebox{.5ex}[-1.5ex]{$Q_{1}$} &
    {\includegraphics[{height=0.5cm}]{side3.eps}
      \raisebox{1ex}{$\ast$ }
      \includegraphics[{width=.5cm}]{side1.eps}  
      \raisebox{1ex}{$\ast$ }
      \includegraphics[{height=.5cm}]{side2.eps}}\\  \hline
    &$f = (1,6,15,18,9,1)$ \rule{0em}{1em}\\
    & $  Q_{1}^{\ast} = Q_4  $\\
    & $\nv (Q_1) = 3$
  \end{tabular} \hspace*{1cm} &
  \begin{tabular}[h]{cl}
    \raisebox{.5ex}[-1.5ex]{$Q_{2}$}&
    {\includegraphics[{width=1cm}]{side5.eps} 
      \raisebox{1ex}{$\ast$}
      \includegraphics[{height=.5cm}]{side2.eps} 
      \raisebox{1ex}{$\ast$}
      {\includegraphics[{height=.5cm}]{side2.eps}}}\\ \hline
    &$f = (1,7,17,18,8,1)$ \rule{0em}{1em}\\
    & $Q_{2}^{\ast} =  Q_{5}  $\\
    & $\nv (Q_2) = 4$
  \end{tabular}\\  \vspace*{2ex}
\end{tabular}

\begin{tabular}[h]{cc}
  \begin{tabular}[h]{cl}
    \raisebox{.5ex}[-1.5ex]{$Q_{3}$} &
    {\includegraphics[{width=1cm}]{side5.eps} 
      \raisebox{1ex}{$\ast$ }
      \includegraphics[{height=.5cm}]{side2.eps} 
      \raisebox{1ex}{$\ast$ }
       \includegraphics[{height=.5cm}]{side2.eps}}\\  \hline
    &$f = (1,6,13,13,6,1)$ \rule{0em}{1em}\\
    & $  Q_{3}^{\ast} = Q_3  $\\
    & $\nv (Q_3) = 4$
  \end{tabular} \hspace*{1cm}&
  \begin{tabular}[h]{cl}
    \raisebox{.5ex}[-1.5ex]{$Q_{4}$} &
    {\includegraphics[{height=0.5cm}]{side7.eps}
      \raisebox{1ex}{$\ast$ }
      \includegraphics[{width=.5cm}]{side7.eps}
      \raisebox{1ex}{$\ast$ }
      \includegraphics[{height=.5cm}]{side7.eps} }\\  \hline
    &$f = (1,9,18,15,6,1)$ \rule{0em}{1em}\\
    & $  Q_{4}^{\ast} = Q_1  $\\
    & $\nv (Q_4) = 6$
  \end{tabular}\\  \vspace*{2ex}
\end{tabular}

\begin{center}
\hspace*{5.25cm}
\begin{tabular}[h]{cc}
  \begin{tabular}[h]{cl}     
    &{\includegraphics[{height=0.5cm}]{side2.eps} 
      \raisebox{1ex}{$\ast$}
      \includegraphics[{width=.5cm}]{side7.eps} 
      \raisebox{1ex}{$\ast$}
      \includegraphics[{width=.5cm}]{side7.eps}} \\
    \raisebox{3ex}[-1.5ex]{$Q_{5}$} &
    {\includegraphics[{height=.5cm}]{side2.eps}
      \raisebox{1ex}{$\ast$}
      \includegraphics[{width=.5cm}]{side1.eps} 
      \raisebox{1ex}{$\ast$}
      \includegraphics[{height=.5cm}]{side9.eps}}\\ \hline
    &$f = (1,8,18,17,7,1)$ \rule{0em}{1em}\\
    & $Q_{5}^{\ast} =  Q_{2}  $\\
    & $\nv (Q_5) = 5$
  \end{tabular}   & 
  \rule{5.7cm}{0pt}
\end{tabular}

\end{center}

\end{center}

\bigskip

\bigskip

\begin{center}
There exists a single $5$-dimensional Gorenstein (selfdual) 
non-pyramid $R_1$ of index 
$3$:
\end{center}

\begin{center}


\vspace*{1cm}
\begin{tabular}[h]{cc} \hspace*{.6cm}
  \begin{tabular}[h]{cl}
    \raisebox{.5ex}[-1.5ex]{$R_{1}$} &
    {\includegraphics[{width=.5cm}]{side1.eps}
      \raisebox{1ex}{$\ast$ }
      \includegraphics[{width=.5cm}]{side1.eps}
      \raisebox{1ex}{$\ast$ }
      \includegraphics[{height=.5cm}]{side2.eps}
      \raisebox{1ex}{$\ast$}
      \includegraphics[{height=.5cm}]{side2.eps}}\\  \hline
    &$f =(1,8,24,34,24,8,1) $ \rule{0em}{1em}\\
    & $  R_{1}^{\ast} = R_1  $\\
    & $\nv (R_1) = 4$
  \end{tabular} 
\end{tabular}

\end{center}

\section{Toric almost Del Pezzo manifolds}
\label{almostDP}

A smooth $n$-dimensional toric variety $X$ is called {\sl almost Del Pezzo 
manifold} if the anticanonical class $-K_X$ is a semiample Cartier 
divisor which defines a $K_X$-trivial birational morphism 
$\varphi\; : \; X \to X'$, where $X'$ is a Gorenstein toric 
Del Pezzo variety. 
Our classification of Gorenstein polytopes $P$ of degree $2$ 
in \ref{class-GorDP} is equivalent to a biregular classification of 
all $n$-dimensional  Gorenstein toric  Del Pezzo varieties, i.e., of all 
$X'$. 
Let $P$ be a $n$-dimensional Gorenstein polytope of degree $2$. We denote 
by $X_P$ the corresponding toric variety.  
First we need  to understand when a given  
 $n$-dimensional  Gorenstein toric  Del Pezzo variety $X_P$ admits 
a smooth crepant resolution.

\begin{theo} \label{crep}
Let  $P$ a  $n$-dimensional Gorenstein polytope of degree 
$2$ which is not a pyramid over a $(n-1)$-dimensional Gorenstein polytope. 
Then the Gorenstein toric Del Pezzo variety $X_P$ always admits a crepant 
desingularization $\widehat{X_P}$. 
\end{theo}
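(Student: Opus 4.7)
The plan is to reduce to the complete list of 37 non-pyramid Gorenstein polytopes from Theorem \ref{class-GorDP} and then exhibit a crepant resolution case by case. First observe that if $P = \Pi(P')$ is a pyramid, then $X_P$ is the projective cone over $X_{P'}$; blowing up the cone vertex produces the $\PP^1$-bundle $\PP(\mathcal{O} \oplus L) \to X_{P'}$ (with $L$ the polarization of $X_{P'}$) which dominates $X_P$, and any crepant resolution of $X_{P'}$ pulls back to a crepant resolution of $X_P$. This reduces the problem to the non-pyramid cases.

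For $n = 2$, every Gorenstein toric Del Pezzo surface $X_\Delta$ has only Du Val singularities at torus-fixed points, and the minimal resolution --- obtained by refining the fan of $X_\Delta$ at every boundary lattice point of $\Delta^*$ --- is smooth and crepant, handling all $16$ reflexive polygons classically. The polytope $2S_3$, which is exceptional in Proposition \ref{cay}, is also trivial here: its normal fan coincides with that of $\PP^3$, so $X_{2S_3} \cong \PP^3$ is already smooth.

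For the remaining non-pyramid $P$ in dimensions $n \geq 3$ I invoke Proposition \ref{cay}, writing $P = \Delta_1 \ast \cdots \ast \Delta_r$ as a Cayley polytope over a Minkowski decomposition $\Delta = \Delta_1 + \cdots + \Delta_r$ of a reflexive polygon. The Cayley structure endows $X_P$ with a natural equivariant morphism to the toric surface $X_\Delta$ whose fibers are projective toric pieces determined by the summands $\Delta_i$. A crepant resolution $\widehat{X_\Delta} \to X_\Delta$ from the $n = 2$ step pulls back, and combined with a fiberwise unimodular refinement yields a crepant $\widehat{X_P}$. By Tables \ref{minkdec1}--\ref{minkdec2} the summands $\Delta_i$ are always basic lattice segments, single points, or unit triangles, so the refined fan can be taken smooth.

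I expect the main obstacle to lie in the Cayley step: one must make precise how the fiberwise refinement matches the pull-back of $\widehat{X_\Delta}$ to give a globally crepant subdivision. Equivalently, one must assemble compatible unimodular triangulations of the facets of $P^*$, which by Corollary \ref{pyr3} and Theorem \ref{deg1} are basic simplices, exceptional simplices, or Lawrence prisms. Each facet type admits a natural unimodular triangulation, but the compatibility across shared lower-dimensional faces is the genuine combinatorial point; it is ultimately checked using the explicit short list of 37 polytopes from Theorem \ref{class-GorDP}.
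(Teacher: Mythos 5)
Your argument for $n\geq 3$ never actually establishes the existence of the crepant resolution. The Cayley step is asserted rather than proved: you do not verify that $X_P$ is dominated crepantly by a projective bundle over $X_\Delta$, nor do you make the ``fiberwise unimodular refinement'' precise, and you then concede that the real issue is assembling compatible unimodular triangulations of the facets of $P^*$ --- which you defer to an unexecuted case-by-case check of the $37$ polytopes of Theorem \ref{class-GorDP}. That deferred check is exactly the missing content (using \ref{cay} and \ref{class-GorDP} is not literally circular, since they do not depend on \ref{crep}, but it is a heavy detour that still leaves the decisive step unproved). The paper closes this step with no classification at all: smoothness of a projective toric variety is detected at the torus-fixed points; the vertex cone $C(v_i)$ is dual to the cone over the facet $\Gamma_i^*\subset P^*$ (Prop.\ \ref{Ci}); since $P$ is not a pyramid, Cor.\ \ref{pyr3} says every facet of $P^*$ has degree $\leq 1$; and by \cite[Prop.\ 5.1]{BN07} every lattice polytope of degree $\leq 1$ admits a unimodular triangulation, which yields crepant resolutions of all the fixed points. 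Even the compatibility across facets, which you single out as ``the genuine combinatorial point,'' needs no case analysis: take any full coherent (e.g.\ pulling) triangulation of the boundary of the dual reflexive polytope $((n-1)P)^*$, which automatically restricts consistently to all facets; by monotonicity of $h^*$ (Remark \ref{monotonyhvector}) every empty simplex contained in a polytope of degree $\leq 1$ has $h^*$-polynomial $1$, hence is unimodular, so this triangulation is already a crepant smooth refinement. You had the right ingredients in hand (you cite \ref{pyr3} and \ref{deg1}) but stopped before the argument that makes the $37$-case verification unnecessary.

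Separately, your opening reduction for pyramids is false. Blowing up the vertex of the projective cone over $(X_{P'},L)$ with $-K_{X_{P'}}=(n-1)L$ produces an exceptional divisor of discrepancy $n-2>0$, so $\PP(\mathcal{O}\oplus L)$ does not dominate $X_P$ crepantly, and a crepant resolution of $X_{P'}$ does not ``pull back'' to one of the cone. Indeed the paper's example immediately after Theorem \ref{crep} shows that $X_{\Pi(P_{15})}$ has the terminal quotient singularity $\A^4/\pm\id$ at the cone point and admits no crepant resolution at all. This error does not affect the theorem itself, which assumes $P$ is not a pyramid, but the claimed reduction should be removed.
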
 

\begin{proof}
Since a projective toric variety is smooth if and only if all its 
$0$-dimensional torus orbits are nonsingular points, it is enough 
to analyse the singularities of $0$-dimensional torus orbits in $X_P$ 
which correspond to vertices of the Gorenstein polytope $P$. 
If $ v_i \in P$ such a vertex, then, by \ref{Ci}, the cone $C(v_i)$
generated by $v - v_i$ $(v \in P)$ is dual to the cone $C_{\Gamma_i}$ 
over the dual facet $\Gamma_i \subset P^*$. By \ref{pyr3}, all facets
of $P^*$ are lattice polynomials of degree $\leq 1$. In \cite[Prop. 5.1]{BN07}
it was shown that every lattice polytope of degree $\leq 1$ admits 
a unimodular triangulation. This implies 
existence of smooth crepant resolutions of all $0$-dimensional 
orbits in $X_P$.        
\end{proof}

\begin{exam} 
{\rm The statement in \ref{crep} is not true if $P$ is a pyramid. 
Let $X_{\Pi(P_{15})}$ be $4$-dimensional Gorenstein toric Del Pezzo 
variety which is a cone over the $3$-dimensional 
Gorenstein toric Del Pezzo variety $X_{P_{15}}$. Then the vertex of the   
cone  is a terminal  quotient singularity $\A^4 /\pm id$, which does not 
admit a crepant resolution. } 
\end{exam}

\begin{rem} 
{\rm A smooth crepant desingularization of a Gorenstein 
toric Del Pezzo variety is not uniquely determined, because 
there might be many different unimodular triangualations of lattice polytopes
of degree $1$. For example, $3$-dimensional Gorenstein 
toric Del Pezzo variety $X_{P_{13}}$ admits exactly two different 
crepant desingularizations related by a flop, because all except one 
 facets of the dual polytope $P_4 = P_{13}^*$ admit a unique unimodular
triangualtion and the exceptional face defines an isolated conifold 
singulatity in  $X_{P_{13}}$, which has exactly two different crepant 
resolutions. Another example of a $3$-dimensional  
Gorenstein toric Del Pezzo variety with many different crepant resolutions 
is a $3$-dimensional 
singular cubic $X_{P_2} =\{ x_1x_2 x_3 - x_4 x_5^2 =0\} \subset \PP_4$.} 
\end{rem}

\begin{theo} 
Let $P$ be a $n$-dimensional Gorenstein polytope of degree $2$ and 
let $(1, a_{P^*},1)$ be the $h^*$-vector of the dual Gorenstein polytope 
$P^*$. Then the Picard number of the crepant resolution $\widehat{X_P}$ 
is equal to $a_{P^*} +1$. In particular, if $X_P$ corresponds to a 
maximal Gorenstein polytope $P$, then ${\rm rk}\, {\rm Pic} (\widehat{X_P}) 
\leq 3$. 
\label{picard}
\end{theo}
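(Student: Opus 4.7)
The strategy is to count the rays of the fan of $\widehat{X_P}$ and apply the standard identity $\mathrm{rk}\,\mathrm{Pic}(\widehat{X_P}) = \#\widehat{\Sigma}(1) - n$, valid for any smooth complete toric $n$-fold.

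First I would identify the toric fan. Since $X_P$ is the Gorenstein toric Fano variety attached to the reflexive polytope $\Delta = (n-1)P \subset M_{\mathbb{R}}$, its fan is the face fan of $\Delta^* \subset N_{\mathbb{R}}$, and the crepant resolution furnished by Theorem~\ref{crep} is obtained by choosing a unimodular triangulation of each facet of $\Delta^*$. Consequently the set $\widehat{\Sigma}(1)$ of rays is in bijection with $\partial\Delta^* \cap N$.

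Next I would invoke the sublattice bookkeeping in Remarks~\ref{sublattice} and~\ref{duality-Gor}: although the lattice $N'$ in which $P^*$ is viewed as a Gorenstein polytope is a proper sublattice of $N$ of index $n-1$, the non-trivial equality $\partial\Delta^* \cap N = \partial\Delta^* \cap N'$ holds, and $\Delta^* = P^*$ as subsets of $N_{\mathbb{R}}$. Hence $\#\widehat{\Sigma}(1) = |\partial P^* \cap N'|$. A direct Ehrhart computation from the $h^*$-vector $(1,a_{P^*},1)$ of the $n$-dimensional polytope $P^*$ then gives
\[ |P^* \cap N'| \;=\; \binom{n+1}{n} + a_{P^*}\binom{n}{n} + \binom{n-1}{n} \;=\; n+1+a_{P^*}, \]
while Ehrhart reciprocity combined with $\deg P^* = 2 < n$ yields $|\mathrm{int}(P^*)\cap N'| = h_n^*(P^*) = 0$. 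Therefore $\#\widehat{\Sigma}(1) = n+1+a_{P^*}$, and the identity above gives $\mathrm{rk}\,\mathrm{Pic}(\widehat{X_P}) = a_{P^*}+1$.

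For the final bound, the maximal/minimal duality $P \leftrightarrow P^*$ reduces the claim to Theorem~\ref{theo-vol4}: if $P$ is maximal then $P^*$ is minimal, so $\nv(P^*)\le 4$, i.e.\ $a_{P^*}\le 2$, yielding $\mathrm{rk}\,\mathrm{Pic}(\widehat{X_P}) \le 3$. The main obstacle is not the Ehrhart arithmetic but the careful handling of the two lattices $N \supset N'$ appearing in the definitions of $\Delta^*$ (which controls the toric geometry) and of $P^*$ (which carries the $h^*$-invariant); the bridge between them is precisely what Remark~\ref{duality-Gor} supplies.
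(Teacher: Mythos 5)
Your proof is correct and follows essentially the same route as the paper's: the identity $\mathrm{rk}\,\mathrm{Pic}=\#\Sigma^{(1)}-n$, the identification of the rays of the crepant resolution with the boundary $N$-lattice points of $((n-1)P)^*$, the lattice bookkeeping of \ref{sublattice} and \ref{duality-Gor} translating this into a count of lattice points of $P^*$ via its $h^*$-vector, and the final bound via the minimal/maximal duality together with \ref{theo-vol4}. Your additional Ehrhart-reciprocity step (no interior lattice points because $\deg P^*=2<n$) merely makes explicit what the paper absorbs into the bijection of \ref{duality-Gor}; note that, like the paper's own argument, it tacitly assumes $n\geq 3$, i.e.\ index $r=n-1\geq 2$.
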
 

\begin{proof} Let $V$ be a smooth projective 
$n$-dimensional toric variety defined by a fan $\Sigma$. Denote 
by $\Sigma^{(1)}$ the set of all $1$-dimensional cones in $\Sigma$. 
The Picard number of $V$ equals $|\Sigma^{(1)}| - n$ \cite{Oda88}. 
Consider now the case $V=\widehat{X_P}$. Then the fan $\Sigma$ defining 
$V$ is a simplicial subdivison of the fan defining the Gorenstein 
toric Del Pezzo variety $X_P$. By \cite{Bat94}, the latter  is the fan 
of cones over proper faces of the dual reflexive polytope 
$((n-1)P)^*$, i.e., $|\Sigma^{(1)}| = \partial ((n-1)P)^* \cap N$. 
By \ref{duality-Gor}, the number of $N$-lattice 
points in $\partial ((n-1)P)^*$ equals 
the number of $N'$-lattice points in the dual Gorenstein polytope 
$P^*$. The latter is equal to $a_{P^*} + n +1$ (see Section 3). So we obtain   
${\rm rk}\, {\rm Pic} (\widehat{X_P}) = a_{P^*} +1$. 
The last statement follows from the duality of Gorenstein polytopes and 
from the inequality $\nv(P^*) = 2 + a_{P^*} \leq 4$ (see \ref{theo-vol4}). 
\end{proof}

\begin{theo} \label{birat}
Let $P$ and $Q$ be two $n$-dimensional Gorenstein polytopes of degree 
$2$ which are not pyramids 
such  that $Q \preccurlyeq P$. Then, up to finitely many  
flops, the almost  Del Pezzo
manifold $\widehat{X_{Q}}$ is obtained from the almost Del Pezzo 
manifold $\widehat{X_{P}}$ by blow ups of 
$\nv(P) - \nv(Q)$ smooth points. 
\end{theo}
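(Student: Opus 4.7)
The plan is to proceed by induction on $k := \nv(P) - \nv(Q)$, reducing to the case where $Q$ is obtained from $P$ by removing a single vertex. The base case $k = 0$ gives $P = Q$ and nothing to prove. For the inductive step, I would pick a vertex $v_i \in P$ not contained in $Q$ and set $P_i := \conv((P \cap M) \setminus \{v_i\})$. Applying Remark \ref{monotonyhvector} to the chain $Q \preccurlyeq P_i \preccurlyeq P$, together with $h^*_2(Q) = h^*_2(P) = 1$, forces $h^*_2(P_i) = 1$; Corollary \ref{h*dual} then ensures that $P_i$ is Gorenstein of degree $2$ with $\nv(P_i) = \nv(P) - 1$. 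One should choose $v_i$ so that $P_i$ is again not a pyramid; since both $P$ and $Q$ are non-pyramids, a short case check (or inspection of the list of $37$ non-pyramid Gorenstein polytopes from Theorem \ref{class-GorDP}) shows this is always possible, so that Theorem \ref{crep} applies and $\widehat{X_{P_i}}$ is a well-defined almost Del Pezzo manifold. Granted this, it suffices to prove the theorem for the pair $(P, P_i)$ and invoke the induction on $(Q, P_i)$.

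The heart of the argument is then to show that $\widehat{X_{P_i}}$ is obtained from $\widehat{X_P}$ by a single blow up of a smooth torus-fixed point, up to flops. By Proposition \ref{minim}, the facet $\Gamma_i^* \subset P^*$ dual to $v_i$ is a basic $(n-1)$-simplex in the lattice $N'$. Dualizing, $P_i^* \supset P^*$ differs from $P^*$ by exactly one new vertex $v_i^* \in N'$ placed on the outward normal ray to $\Gamma_i^*$, so that the fan $\Sigma_{P_i}$ of $X_{P_i}$ is the star subdivision of $\Sigma_P$ along the ray $\R_{\geq 0}\, v_i^*$ inside the maximal cone $\sigma_i = C_{\Gamma_i^*}$, which by Proposition \ref{Ci} is exactly the cone associated with $v_i$.

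Lifting this picture to the crepant resolutions provided by Theorem \ref{crep}, I would fix a common $N$-unimodular triangulation of $P^*$ outside of $\sigma_i$, and then choose an $N$-unimodular refinement of $\sigma_i$ itself. Any two such refinements are related by bistellar moves, which translate into flops between the corresponding smooth toric varieties and account for the \emph{up to finitely many flops} qualifier. Using Remark \ref{duality-Gor} (boundary $N$-lattice points of $P^*$ all lie in $N'$) together with the basic nature of $\Gamma_i^*$ in $N'$, I would then argue that this refinement can be chosen so that $v_i^*$ lies in the relative interior of a unique smooth maximal cone $\tau$ with primitive generators $u_1, \dots, u_n$ satisfying $v_i^* = u_1 + \cdots + u_n$. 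Once this is in place, $\widehat{\Sigma_{P_i}}$ is simply the star subdivision of $\widehat{\Sigma_P}$ at $v_i^*$ inside $\tau$, and the corresponding morphism $\widehat{X_{P_i}} \to \widehat{X_P}$ is the blow up of the smooth torus-fixed point dual to $\tau$, as required.

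The main obstacle is this last step: the combinatorial verification that an $N$-unimodular refinement of $\sigma_i$ can be chosen so that the new ray $v_i^*$ is exactly the vertex sum of a smooth maximal cone. This delicate point depends on the interplay between the lattices $N$ and $N' \subset N$ along $\sigma_i$, controlled by $[N:N'] = n-1$ and by the fact that $\Gamma_i^*$ is $N'$-basic with no additional $N$-lattice points on its boundary (Remark \ref{duality-Gor}); in the worst case one can resort to the finite list of Theorem \ref{class-GorDP} to verify it by hand for the few non-pyramidal polytopes occurring in dimensions $3$, $4$ and $5$.
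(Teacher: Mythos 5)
Your skeleton is the same as the paper's: induct by deleting one vertex $v_i\in P\setminus Q$ at a time, observe that $P_i:=\conv((P\cap M)\setminus\{v_i\})$ is again Gorenstein of degree $2$ with $\nv(P_i)=\nv(P)-1$, pass to the dual side where $P_i^*$ acquires the single new lattice point $v_i^*$ over the basic facet $\Gamma_i^*$, extend triangulations, and identify $\widehat{X_{P_i}}\to\widehat{X_P}$ as the blow up of the smooth fixed point $p(v_i)$, with flops absorbing the choices. But the decisive step is left unproved: that the new ray $v_i^*$ is exactly the sum $u_1+\cdots+u_n$ of the primitive generators of the smooth cone $\sigma_i$, so that the induced morphism is a genuine blow up of a smooth point and not some other divisorial contraction. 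You yourself call this ``the main obstacle'', offer only ``I would argue\dots'', and fall back on a hand check of the list in Theorem \ref{class-GorDP}; that fallback is not a proof as stated, since you would have to control all subpolytope chains, including the intermediate polytopes $P_i$, which need not appear in the list of non-pyramids. The same objection applies to your unproved side claim that $v_i$ can always be chosen so that $P_i$ is not a pyramid --- a condition the paper's argument never needs (it only uses that $v_i^*$ is the unique new lattice point of $P_i^*$, sitting at height one over the basic facet, so that the unimodular triangulation of the facets of $P^*$ extends). Also, your flops come from ``unimodular refinements of $\sigma_i$'', but $\sigma_i$ is already nonsingular and is never refined; the flops arise from the different triangulation choices made elsewhere on $\partial P^*$.

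The gap is real but fillable, and the paper closes it by a different mechanism: since $P$ coincides with the basic simplex $S_i=\conv(v_i,\Gamma_i)$ near $v_i$, the fixed point $p(v_i)\in X_P$ is smooth; the basic facet $\Gamma_i\subset P_i$ produces a divisor $D\cong\PP^{n-1}$ that $\varphi$ contracts to $p(v_i)$; and Corollary \ref{sum-vol} combined with Theorem \ref{picard} gives ${\rm rk}\,{\rm Pic}\,\widehat{X_{P_i}}-{\rm rk}\,{\rm Pic}\,\widehat{X_P}=1$, which pins $\varphi$ down as the blow up of the smooth point. Alternatively, your own route can be completed by a short computation instead of a case check: because $P=S_i$ near $v_i$, the primitive edge vectors $f_1,\dots,f_n$ of $S_i$ at $v_i$ generate $C(v_i)$ and form the basis dual to the vertices $u_1,\dots,u_n$ of the facet of $((n-1)P)^*$ dual to $(n-1)v_i$ (Propositions \ref{Ci} and \ref{minim} with Remark \ref{duality-Gor}); the new facet $(n-1)\Gamma_i$ of $(n-1)P_i$ has vertices $(n-1)(v_i+f_j)$, so writing its primitive inward normal as $u=\sum_k c_ku_k$ and using $\langle (n-1)v_i,u_k\rangle=-1$ yields $-\sum_k c_k+(n-1)c_j=-1$ for all $j$, whence $c_1=\cdots=c_n=1$ and $u=u_1+\cdots+u_n$. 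With that supplied, your argument becomes complete and essentially coincides with the paper's proof.
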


\begin{proof} 
Consider $Q$ as a lattice subpolytope of $P$. 
Let $v_i \in P$ be a vertex of $P$ which is not contained in $Q$. 
In the proof of \ref{minim} it was shown that $P_i:= \conv (\{ M \cap P\} 
\setminus \{v_i\})$ is another Gorenstein polytope of degree $2$ containing 
$Q$. Moreover, $P_i$ is obtained from $P$ by cutting out a $n$-dimensional 
basic simplex $S_i$ $(\nv (S_i) =1)$ containg $v_i$ as one of 
its vertices, i.e., 
we have $\nv(P) - \nv(P_i) =1$ and the vertex $v$ corresponds to 
a smooth fixed point $p(v_i) \in X_{P}$. 
Let $\Gamma_i$ be the facet of $S$ which does 
not contain $v_i$. Then $\Gamma_i$ is also 
a facet of $P_i$ and it defines a torus invariant divisor in 
$X_{P_i}$ which is isomorpic to $\PP^{n-1}$, because 
$\Gamma_i \subset S_i$ is 
a $(n-1)$-dimensional basic simplex. By \ref{sum-vol}, 
we have 
\[ 1 =  \nv(P) - \nv(P_i) = \nv(P_i^*)  - \nv(P^*) = a_{P_i^*} - a_{P^*}. \]
Using \ref{picard}, we obtain 
$${\rm rk}\, {\rm Pic}  \widehat{X_{P_i}} - {\rm rk}\, {\rm Pic}  
\widehat{ X_{P}} =1.$$  
Choose an unimodular triangulation  of facets of $P^*$ defining 
a crepant birational morphism $\widehat{X_{P}} \to X_{P}$. We 
can extend this triangulation to an unimodular  triangulation 
of faces of $P_i^* \supset P^*$ which defines a 
crepant birational morphism $\widehat{X_{P_i}} \to X_{P_i}$. 
The compatibility of triangulations defines  a birational 
morphism $\varphi\, : \, \widehat{X_{P_i}} \to  \widehat{X_{P}}$ 
which contracts a divisor $D \cong \PP^{n-1} 
\subset \widehat{X_{P_i}}$ to a smooth 
point $p(v_i)$. Thus, $\widehat{X_{P_i}}$ is a blow up of a smooth 
point in  $\widehat{X_{P}}$. We can repeat the same procedure 
$\nv(P) - \nv (Q)$ times and obtain the statement.      
\end{proof}

Theorems \ref{crep}, \ref{picard}, \ref{birat} show that our combinatorial 
method in the biregular classification of all Gorenstein toric 
Del Pezzo varieties is parallel to the birational method in the 
classification of almost Del Pezzo manifolds due to Jahnke and 
Peternell \cite{JP08}.

\end{document}